\newcommand{\br}[3]{{$#1$}$\lower4pt\hbox{$\tp\atop\raise4pt \hbox{$\scriptscriptstyle{#2}$}$} ${$#3$}}
\newcommand{\tw}[3]{{$#1$}${\,\scriptscriptstyle {#2}}\atop\raise9pt\hbox{$\scriptstyle\tp$} ${$#3$}}
\newcommand{\ttps}[2]{{#1}\raise5pt\hbox{$\lower12pt\hbox{$\scriptstyle\tp$}\atop \lower0pt\hbox{$\tilde\;$}$}\raise4.5pt\hbox{${\scriptstyle{#2}}$}}
\newcommand{\st}[1]{\mbox{${\,\scriptscriptstyle {#1}}\atop\raise5.5pt\hbox{$*$}$}}
\newcommand{\rd}[1]{\mbox{${\,\scriptscriptstyle {#1}}\atop\raise5.5pt\hbox{$\bullet$}$}}
\newcommand{\rt}[1]{\otimes_\chi}
\newcommand{\lt}[1]{\mbox{${\,\scriptscriptstyle {#1}}\atop\raise5.5pt\hbox{$\ltimes$}$}}
\newcommand{\btr}{\raise1.2pt\hbox{$\scriptstyle\blacktriangleright$}\hspace{2pt}}
\newcommand{\btl}{\raise1.2pt\hbox{$\scriptstyle\blacktriangleleft$}\hspace{2pt}}
\newcommand{\lcr}{\raise1.0pt \hbox{${\scriptstyle\rightharpoonup}$}}
\newcommand{\rcr}{\raise1.0pt \hbox{${\scriptstyle\leftharpoonup}$}}
\newcommand{\ttp}{{\lower12pt\hbox{$\tp$}\atop \hbox{$\tilde\;$}}}
\newcommand{\id}{\mathrm{id}}
\newcommand{\Bc}{\mathcal{B}}
\newcommand{\Ac}{{\mathcal{A}}}
\newcommand{\Sc}{\mathcal{S}}
\newcommand{\J}{J}
\newcommand{\Ru}{\mathcal{R}}
\newcommand{\Fc}{\mathcal{F}}
\newcommand{\Wc}{\mathcal{W}}
\newcommand{\C}{\mathbb{C}}
\newcommand{\Sbb}{\mathbb{S}}
\newcommand{\Z}{\mathbb{Z}}
\newcommand{\N}{\mathbb{N}}
\newcommand{\tp}{\otimes}
\newcommand{\zt}{\zeta}
\newcommand{\ve}{\varepsilon}
\newcommand{\gm}{\gamma}
\newcommand{\dt}{\delta}
\newcommand{\op}{\oplus}
\newcommand{\la}{\lambda}
\newcommand{\tr}{\triangleright}
\newcommand{\End}{\mathrm{End}}
\newcommand{\Span}{\mathrm{Span}}
\newcommand{\Hom}{\mathrm{Hom}}
\newcommand{\Rm}{\mathrm{R}}
\newcommand{\La}{\Lambda}
\newcommand{\g}{\mathfrak{g}}
\renewcommand{\b}{\mathfrak{b}}
\renewcommand{\k}{\mathfrak{k}}
\newcommand{\h}{\mathfrak{h}}
\newcommand{\s}{\mathfrak{s}}
\renewcommand{\o}{\mathfrak{o}}
\newcommand{\eps}{\epsilon}
\renewcommand{\l}{\mathfrak{l}}
\renewcommand{\c}{\mathfrak{c}}
\newcommand{\si}{\sigma}
\newcommand{\al}{\alpha}
\newcommand{\bt}{\beta}
\newcommand{\be}{\begin{eqnarray}}
\newcommand{\ee}{\end{eqnarray}}
\newtheorem{thm}{Theorem}[section]
\newtheorem{propn}[thm]{Proposition}
\newtheorem{lemma}[thm]{Lemma}
\newtheorem{corollary}[thm]{Corollary}
\newcommand{\parag}{\advance\prg by1 {\noindent\bf\thesection.\the\prg\hspace{6pt}}}
\begin{document}

\title{Contravariant forms and extremal projectors}
\author{
Andrey Mudrov \hspace{1pt}\vspace{20pt}\\
Department of Mathematics,\\ \small University of Leicester, \\
\small University Road,
LE1 7RH Leicester, UK\\[10pt]
\small e-mail: am405@le.ac.uk\\
}

\date{ }

\maketitle

\begin{abstract}
Tensor product of irreducible
modules of  highest weight over a semi-simple quantum group is semi-simple  if and only if a natural
contravariant form is non-degenerate when restricted to  the span of singular vectors.
We express this restriction through the extremal projector of the quantum group providing a
computationally feasible criterion for complete reducibility of tensor products.

\end{abstract}
{\small \underline{Key words}: highest weight modules,  tensor product, complete reducibility, contravariant form, extremal projector}
\\
{\small \underline{AMS classification codes}: 17B10, 17B37.}
\newpage
\section{Introduction}
This is a contituation of  \cite{M1} where we gave a complete reducibility
criterion for a tensor product $V\tp Z$ of two irreducible modules of highest weight over the
(classical or) quantum universal enveloping algebra $U_q(\g)$ of a  semi-simple Lie algebra $\g$. It is formulated in terms of  a
contravariant symmetric bilinear form  on $V\tp Z$, which is the product of the contravariant forms on the tensor factors.
Specifically, $V\tp Z$ is completely reducible if and only if the form is non-degenerate when restricted
to the span  $(V\tp Z)^+\subset V\tp Z$ of singular vectors.
In this paper, we develop an efficient  computational method for practical use of that criterion. It reveals a close relation
of the form with the extremal projector \cite{AST,KT}, which was pointed out for some special cases in \cite{M1}.

We employ a parametrization of $(V\tp Z)^+$ by a certain subspace in one of the tensor factors, e.g. $V^+_Z\subset V$.
It is isomorphic to $\Hom_{U_q(\g_+)}(Z^*, V)$,
where $U_q(\g_+)$ denotes the positive nilpotent subalgebra in $U_q(\g)$,
and the star designates the dual module of lowest weight. The subspace $V^+_Z$ is identified with the kernel of
the left ideal annihilating the lowest vector in $Z^*$.
We consider the pull-back of the contravariant form from $(V\tp Z)^+$  to $V^+_Z$ which we call extremal twist.
Regarded as a linear map from $V^+_Z$ to its dual vector space, this pull-back
relates two natural constructions of singular vectors in $V\tp Z$.

The extremal twist can be obtained as a representation of a universal element $\Theta_Z$ from a certain extension
of $U_q(\g)$, which itself  can be expressed through a lift of the inverted invariant pairing $Z\tp Z^*\to \C$.
Such a pairing is non-degenerate and unique up to a normalization, thanks to irreducibility of $Z$.
The element $\Theta_Z$  appeared before in the theory of dynamical twist, for $Z$  a parabolic Verma module relative to
a Levi subalgebra $\k\subset \g$, cf. \cite{EV,KM}.

When $\k$ is the Cartan subalgebra $\h\subset \g$ and $Z$ is an ordinary Verma module,
the inverse element $\Theta^{-1}_Z$ participated in construction of  dynamical Weyl group in \cite{EV}.
It equals  the shifted extremal projector $p_\g(\zt)$  of $U_q(\g)$, by the
highest weight $\zt$ of the module $Z$. In this paper we extend that relation to all irreducible $Z$
of highest weight,
provided certain regularity assumptions on the operator $p_\g(\zt)$ as
a trigonometric rational function of $\zt$ are fulfilled.
This finding reduces the problem of semi-simplicity of tensor products to
computing the determinant of $p_\g(\zt)$. The shifted extremal projector
is naturally interpreted as  the universal inverse of the contravariant form transferred from $(\>\cdot\> \tp Z)^+$
to $\Hom_{U_q(\g_+)}(Z^*, \>\cdot\>)$.

As an example, we consider a parabolic Verma module $Z$ relative to a Levi subalgebra $U_q(\k)\subset U_q(\g)$.
Such a module is parabolically induced from a finite dimensional $U_q(\k)$-module $X$ of highest weight $\zt$.
The factor $p_\k(\zt)$ entering  $p_\g(\zt)$  is invertible on the subspace of concern for every finite dimensional module $V$.
Then $p_\g(\zt)$ essentially reduces to a product $p_{\g/ \k}(\zt)$ of shifted $\s\l(2)$-projectors over the roots
from $\Rm^+_\g-\Rm^+_\k$. The universal extremal twist $\Theta_Z$ coincides with
$p_{\g/ \k}^{-1}(\zt)$ up to an invertible factor which degenerates to $1$ for scalar $X$.
The poles of $p_{\g/ \k}^{-1}(\zt)$ correspond to
reducible $Z$.

As another application, we compute the extremal twist for $Z$ the base module for a quantum sphere $\Sbb^{2n}$, \cite{M3},
and thereby prove that all tensor products $V\tp Z$ with finite dimensional quasi-classical $U_q(\s\o(2n+1))$-modules $V$
are completely reducible.

\section{Quantized universal enveloping algebras}
\label{SecPrelim}
Suppose that $\g$ is a complex semi-simple Lie algebra and  $\h\subset \g$ its Cartan subalgebra. Fix
a triangular decomposition  $\g=\g_-\op \h\op \g_+$  with  nilpotent Lie subalgebras
$\g_\pm$.
Denote by  $\Rm$ the root system of $\g$, and by $\Rm^+$ the subset of positive roots with basis $\Pi$
of simple roots.
Choose an inner product $(\>.\>,\>.\>)$ on $\h$ as a multiple of the restricted Killing form
and transfer it to $\h^*$ by duality.
For each $\la\in \h^*$  denote by  $h_\la$ an element of $\h$ such that $\mu(h_\la)=(\mu,\la)$, for all $\mu\in \h^*$.
Set $\la^\vee=2\frac{\la}{(\la,\la)}$ for non-zero $\la\in \h^*$.

By $U_q(\g)$ we understand the standard quantum group, cf. \cite{D,CP}. It is a $\C$-algebra
 with the set of generators $e_\al$, $f_\al$, and $q^{\pm h_\al}$, $\al \in \Pi$, obeying
$$
q^{h_\al}e_\bt=q^{(\al,\bt)}e_\bt q^{h_\al},
\quad
[e_\al,f_\bt]=\dt_{\al,\bt}\frac{q^{h_\al}-q^{-h_\al}}{q_\al-q_\al^{-1}},
\quad
q^{h_\al}f_\bt=q^{-(\al,\bt)}f_\bt q^{h_\al},\quad \al, \bt \in \Pi,
$$
where  $q_\al=q^{\frac{(\al,\al)}{2}}$ and $q^{h_\al}q^{-h_\al}=1=q^{-h_\al}q^{h_\al}$.
The elements  $e_\al$ and $e_{-\al}=f_\al$ satisfy the q-Serre relations
$$
\sum_{k=0}^{1-a_{\al \bt}}(-1)^k {1-a_{\al \bt}\choose k}_{q_\al}e_{\pm \al}^{k}e_{\pm \bt}e_{\pm \al}^{1-a_{\al\bt}-k}, \quad \al\not =\bt.
$$
We use the notation  $a_{\al \bt}=(\bt,\al^\vee)$ for the Cartan matrix,  and ${m \choose n}_q=\frac{[m]_q!}{[n]_q![m-n]_q!}$, where $[m]_q!=[1]_q\cdot \ldots \cdot [m]_q$. Here and throughout the paper we write  $[z]_q=\frac{q^z-q^{-z}}{q-q^{-1}}$ for $z\in \h+\C$.
The complex parameter $q\not =0$ is assumed not a root of unity.

Fix the comultiplication in $U_q(\g)$ as
\be
\Delta(f_\al)= f_\al\tp 1+q^{-h_\al}\tp f_\al,\quad \Delta(q^{ h_\al})=q^{h_\al}\tp q^{ h_\al},\quad\Delta(e_\al)= e_\al\tp q^{h_\al}+1\tp e_\al.
\label{coprod}
\ee
Then the antipode $\gm$ acts on the generators by the assignment $\gm( f_\al)=- q^{h_\al}f_\al$, $\gm( q^{h_\al})=q^{- h_\al}$, $\gm( e_\al)=- e_\al q^{-h_\al}$. The counit returns $\eps(e_\al)=\eps(f_\al)=0$, and $\eps(q^{ h_\al})=1$.

Denote by $U_q(\h)$,  $U_q(\g_+)$, $U_q(\g_-)$  the subalgebras in $U_q(\g)$  generated by, respectively, $\{q^{\pm h_\al}\}_{\al\in \Pi}$, $\{e_\al\}_{\al\in \Pi}$, and $\{f_\al\}_{\al\in \Pi}$. The algebra $U_q(\g)$ is a free $U_q(\g_-)-U_q(\g_+)$-bimodule
generated by $U_q(\h)$ and features a triangular decomposition  $U_q(\g)=U_q(\g_-)U_q(\h)U_q(\g_+)$
 as in the classical case.
The quantum Borel subgroups  $U_q(\b_\pm)=U_q(\g_\pm)U_q(\h)$ are Hopf subalgebras in $U_q(\g)$.

We will need the following involutive maps on $U_q(\g)$.
The assignment
\be
\si\colon e_\al\mapsto f_\al, \quad\si\colon f_\al\mapsto e_\al, \quad \si\colon q^{h_\al}\mapsto q^{-h_\al}
\label{sigma}
\ee
extends to an algebra automorphism of $U_q(\g)$ and coalgebra anti-automorphism.
The involution $\omega=\gamma^{-1}\circ \si$ preserves the comultiplication but flips the multiplication.

All $U_q(\g)$-modules are assumed left and diagonalizable over $U_q(\h)$. Given a module  $V$, we write
$V[\la]$ for its subspace of weight $\la \in \h^*$. This notation applies to any $U_q(\h)$-module as well.
We denote by $\La(V)\subset \h^*$ the set of weights of a $U_q(\h)$-module $V$.

\subsection{Contravariant form on $V\tp Z$ and extremal twist}
In this section we recall a criterion for a tensor product $V\tp Z$ to be  completely reducible, following \cite{M1}.
A symmetric bilinear form $\langle \>.\>,.\>\rangle$ on a module $Z$  is called contravariant with
respect to involution $\omega$ if
 $\langle x z, w\rangle=\bigl\langle z,\omega(x)w\bigr\rangle$ for all $z,w\in Z$ and all $x\in U_q(\g)$.
It is known that every highest weight module has a unique, up to a scalar multiplier, contravariant form,
which is non-degenerate if and only if the module is irreducible.
Let us recall its construction.
Let $\wp\colon U_q(\g)\to U_q(\h)$ denote the projection along $\g_-U_q(\g)+U_q(\g)\g_+$ facilitated by
the triangular decomposition.
If $Z$ is the Verma module with highest weight $\zt$ and the highest vector $1_Z$, then the form is defined
by $\langle x 1_Z, y 1_Z\rangle=\zt\left(\wp \bigl(\omega(x)y\bigr)\right)$ for all  $x,y\in U_q(\g)$.
Its kernel is the maximal proper submodule, therefore the form
transfers to any quotient module.

Suppose that  $X$ is a module of lowest weight $\xi$ and $Z$ is a module  of highest weight $\zt$.
We
extend the tensor product $X\tp Z$   to  $X\hat \tp Z$  as follows.
For $\bt \in \Z\Pi$, we define $(X\hat \tp Z)[\xi+\zt+\bt]$  as the vector space of formal sums
over $\mu,\nu\in \Z_+\Pi$ subject to  $\mu-\nu=\bt$ of tensors from $X[\mu+\xi]\tp Z[-\nu+\zt]$. Then $X\hat \tp Z$
consists of  finite linear combinations of elements from $(X\hat \tp Z)[\xi+\zt+\bt]$ with $\bt \in \Z\Pi$.
It is easy to see that the $U_q(\g)$-action on $X\tp Z$ extends to an action of
$X\hat \tp Z$. We also apply this construction to tensor products of diagonalizable
$U_q(\h)$-modules with finite dimensional weight spaces whose weights are bounded from below and, respectively, from above.

The contravariant form on $Z$ is equivalent to an invariant paring $Z\tp Z'\to \C$, where $Z'$ is
the opposite module of lowest weight $-\zt$. They are related by a linear isomorphism
$\id\tp \si_Z\colon Z\tp Z\to Z\tp Z'$, where $\si_Z(f 1_Z)=\si(f)1_{Z'}$ for  $f\in U_q(\g_-)$ and  $1_{Z'}$ is
 the lowest vector in $Z'$. If the form is non-degenerate, $Z'$ is isomorphic to
 $Z^*$, and there exists a $U_q(\g)$-invariant element (the innverse form) $\Sc\in Z'\hat\tp Z$.
The converse is also true.

\begin{propn}
  Suppose there exists a $U_q(\b_+)$-invariant element $\Sc\in Z'\hat \tp Z$
  such that $\Sc_1\langle 1_Z,\Sc_2\rangle =1_{Z'}$. Then $Z$ is irreducible,
   and $\Sc$ is the inverse invariant form.
  \label{lift is singular}
\end{propn}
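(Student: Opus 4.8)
The plan is to pass from the contravariant form on $Z$ to the associated invariant pairing $p\colon Z\tp Z'\to\C$, characterised by $p\bigl(z,\si_Z(w)\bigr)=\langle z,w\rangle$; since $\si_Z(1_Z)=1_{Z'}$ and the form is symmetric, the hypothesis $\Sc_1\langle 1_Z,\Sc_2\rangle=1_{Z'}$ becomes $\Sc_1\,p(\Sc_2,1_{Z'})=1_{Z'}$. I would then introduce the linear map $\Psi\colon Z'\to Z'$, $\;\Psi(\eta)=\Sc_1\,p(\Sc_2,\eta)$, obtained by contracting the $Z$-leg of $\Sc$ against $p$. Because $\Sc$ is $U_q(\h)$-invariant it is homogeneous of weight $0$, so for homogeneous $\eta$ only the single weight component of $\Sc$ whose $Z$-leg matches the weight of $\eta$ contributes; that component lies in a finite-dimensional weight space, so $\Psi(\eta)$ is a genuine element of $Z'$ and $\Psi$ is weight-preserving. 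In this language the hypothesis is simply $\Psi(1_{Z'})=1_{Z'}$.

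The first real step is to show that $\Psi$ is a $U_q(\b_+)$-module endomorphism. Being weight-preserving, $\Psi$ commutes with $U_q(\h)$. For $e_\al$, I would read off from $\Delta(e_\al)=e_\al\tp q^{h_\al}+1\tp e_\al$ the identity $p(\Sc_2,e_\al\eta)=-p(e_\al\Sc_2,q^{h_\al}\eta)$ (invariance of $p$) and the identity $\Sc_1\tp e_\al\Sc_2=-e_\al\Sc_1\tp q^{h_\al}\Sc_2$ in $Z'\hat\tp Z$ (invariance of $\Sc$ under $U_q(\b_+)$); contracting the latter against the functional $z\mapsto p(z,q^{h_\al}\eta)$ on the $Z$-leg and simplifying by $p(q^{h_\al}z,q^{h_\al}\eta)=p(z,\eta)$ yields $\Psi(e_\al\eta)=e_\al\Psi(\eta)$, the two sign changes cancelling. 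Since $e_\al$ and $q^{\pm h_\al}$ generate $U_q(\b_+)$, this proves $\Psi$ is $U_q(\b_+)$-linear.

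Next I would exploit that $Z'$, being a lowest weight module, is generated over $U_q(\b_+)$ by its lowest vector $1_{Z'}$ (equivalently over $U_q(\g_+)$, by the triangular decomposition of $U_q(\g)$; this holds whether or not $Z$ is irreducible). A $U_q(\b_+)$-endomorphism of $Z'$ fixing $1_{Z'}$ is therefore the identity, so $\Psi=\id_{Z'}$. This already gives irreducibility: if $\eta_0\in Z'$ has $p(z,\eta_0)=0$ for all $z\in Z$, then $\eta_0=\Psi(\eta_0)=\Sc_1\,p(\Sc_2,\eta_0)=0$, so the right radical of $p$ vanishes; transporting through the isomorphism $\si_Z$, the radical of the symmetric contravariant form on $Z$ is zero, hence the form is non-degenerate and $Z$ is irreducible by the criterion recalled above.

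Finally, with $Z$ irreducible $p$ is a perfect weight-graded pairing, so $Z\cong (Z')^{*}$ (graded dual) and contraction against $p$ gives a $U_q(\g)$-equivariant identification of $Z'\hat\tp Z$ with the space of weight-preserving endomorphisms of $Z'$, under which the inverse invariant form is precisely the preimage of $\id_{Z'}$. As $\Sc$ maps to $\Psi=\id_{Z'}$, it coincides with the inverse invariant form (and is, in particular, $U_q(\g)$-invariant), which completes the proof. I expect the genuinely delicate points to be the bookkeeping of the completions — ensuring that each contraction of a leg of $\Sc$ lands in $Z'$ or $Z$ rather than in a completion, which is exactly what weight-homogeneity of $\Sc$ guarantees — and keeping the signs and tensor-leg positions straight in the $e_\al$ computation; the structural input that makes the argument short is the cyclicity $Z'=U_q(\b_+)1_{Z'}$, which is available before irreducibility is known.
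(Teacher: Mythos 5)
Your proof is correct and essentially mirrors the paper's: both show that contracting $\Sc$ against the contravariant form gives the identity map (the paper's $z\mapsto\si_Z^{-1}(\Sc_1)\langle\Sc_2,z\rangle$ on $Z$, your $\Psi$ on $Z'$), using contravariance of the form, $U_q(\b_+)$-invariance of $\Sc$, and cyclicity over a triangular subalgebra. The paper packs these ingredients into one direct computation for $f\in U_q(\g_-)$, whereas you factor the same argument into an abstract $U_q(\b_+)$-equivariance claim for $\Psi$ followed by cyclicity of $Z'$ over $U_q(\b_+)$, which is a different bookkeeping of the same idea.
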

\begin{proof}
  For any $f\in U_q(\g_-)$, one has $\omega(f)=(\gm^{-1}\circ \si )(f)\in U_q(\b_+)$. Then
$$
  \Sc_1\langle \Sc_2,f1_Z\rangle = \Sc_1\left\langle (\gm^{-1}\circ \si )(f)\Sc_2,1_Z\right\rangle
  = \si (f)\Sc_1\langle \Sc_2,1_Z\rangle=\si (f)1_{Z'}=\si_Z(f1_{Z}).
$$
In other words, the map $z\mapsto   \si_Z^{-1}(\Sc_1)\langle \Sc_2,z\rangle$
is identical on $Z$. Therefore the contravariant form is non-degenerate,
and $\left \langle z \tp w, (\si_Z^{-1}\tp \id)(\Sc)\right\rangle = \langle z, w\rangle $ for all $z,w\in Z$ as required.
\end{proof}

Suppose that $Z$ is irreducible and let $V$ be another irreducible module of highest weight.
Denote by $(V\tp Z)^+$ the span of singular vectors in $V\tp Z$, i.e. the space of $U_q(\g_+)$-invariants.
Define  canonical contravariant symmetric bilinear form on $V\tp Z$ as the product of contravariant forms on $V$ and $Z$.
\begin{thm}[\cite{M1}]
   The tensor product $V\tp Z$ is completely reducible if and only if the canonical form is non-degenerate when restricted
  to $(V\tp Z)^+$.
\label{canonical}
\end{thm}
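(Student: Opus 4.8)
My plan is to reduce the criterion to a single orthogonality identity for the canonical form. Write $B$ for that form on $V\tp Z$, the product of the contravariant forms on the two factors; since $V$ and $Z$ are irreducible, these factor forms are non-degenerate, so $B$ is non-degenerate, and since $\omega$ preserves the coproduct, $B$ is contravariant with respect to $\omega$ on all of $V\tp Z$. Because $V$ and $Z$ are highest weight modules, every weight of $V\tp Z$ lies in $\la+\zt-\Z_+\Pi$, where $\la$ denotes the highest weight of $V$, and all weight spaces of $V\tp Z$ are finite-dimensional. I shall use the resulting standard facts: for any submodule $N$, the $B$-orthogonal complement $N^{\perp}$ is again a submodule and $(N^{\perp})^{\perp}=N$; and every non-zero submodule of $V\tp Z$ contains a non-zero singular vector.

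The crucial step, which I expect to carry the weight of the argument, is the identity
$$
\bigl((V\tp Z)^{+}\bigr)^{\perp}=\Lc,\qquad \Lc:=\sum_{\al\in\Pi}f_\al\,(V\tp Z).
$$
To establish it I would compute $\omega(f_\al)=\gm^{-1}(e_\al)=-q^{-h_\al}e_\al$, so that on a weight vector $\omega(f_\al)$ acts as a non-zero scalar multiple of $e_\al$. Then, using that $B$ is non-degenerate on $V\tp Z$,
$$
m\perp\Lc
\iff\langle\omega(f_\al)m,\,V\tp Z\rangle=0\ \ (\forall\,\al\in\Pi)
\iff\omega(f_\al)m=0\ \ (\forall\,\al\in\Pi)
\iff m\in(V\tp Z)^{+},
$$
so $\Lc^{\perp}=(V\tp Z)^{+}$; applying $\perp$ once more, which is legitimate since $\Lc$ is a sum of weight spaces and these are finite-dimensional, yields the identity. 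Consequently the radical of $B$ restricted to $(V\tp Z)^{+}$ equals $(V\tp Z)^{+}\cap\Lc$, and the theorem becomes the equivalence: $V\tp Z$ is completely reducible if and only if $(V\tp Z)^{+}\cap\Lc=0$.

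For the ``only if'' direction I would decompose $V\tp Z=\bigoplus_{i}L_i$ with each $L_i$ irreducible, hence a highest weight module; then $(V\tp Z)^{+}=\bigoplus_{i}L_i^{+}$ and $\Lc=\bigoplus_{i}\sum_{\al}f_\al L_i$, while the highest weight vector of each $L_i$ does not lie in $\sum_{\al}f_\al L_i$ for weight reasons, so $(V\tp Z)^{+}\cap\Lc=0$. For the ``if'' direction, assume $(V\tp Z)^{+}\cap\Lc=0$. First, the submodule generated by all singular vectors, which is $U_q(\g_-)(V\tp Z)^{+}$, must be all of $V\tp Z$: otherwise its orthogonal complement is a non-zero submodule, hence contains a singular vector, and this vector then lies in $(V\tp Z)^{+}\cap\bigl((V\tp Z)^{+}\bigr)^{\perp}=(V\tp Z)^{+}\cap\Lc=0$, a contradiction. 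Second, for any singular vector $w$ the highest weight submodule $L_w=U_q(\g_-)w$ is irreducible: its maximal proper submodule, being contained in the sum of the weight spaces strictly below the top, lies in $(U_q(\g_-))_{\ge 1}\,w=\sum_{\al}f_\al U_q(\g_-)w\subseteq\Lc$, hence contains no non-zero singular vector and is therefore $0$. Combining the two points, $V\tp Z=\sum_{w}L_w$ is a sum of irreducible submodules, i.e. completely reducible.

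The heart of the matter is thus the orthogonality identity $\bigl((V\tp Z)^{+}\bigr)^{\perp}=\sum_{\al}f_\al(V\tp Z)$ --- the observation that the singular vectors and the span of the images of the lowering generators are exact $B$-orthogonal complements. The only other delicate points are the two standard facts used above, $(N^{\perp})^{\perp}=N$ and the presence of a singular vector in every non-zero submodule; both follow from the finiteness of weight spaces and the boundedness of the weights of $V\tp Z$, and once they are in place the argument proceeds as indicated.
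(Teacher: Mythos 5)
The paper does not prove this theorem --- it is quoted verbatim from \cite{M1} --- so there is no in-paper argument to compare your proposal against. What I can do is confirm that your proof is correct and assess its internal logic.

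Your central observation, the orthogonality identity
\[
\bigl((V\tp Z)^{+}\bigr)^{\perp}\;=\;\Lc\;:=\;\sum_{\al\in\Pi}f_\al\,(V\tp Z),
\]
is cleanly justified: $\omega(f_\al)=\gm^{-1}(e_\al)=-q^{-h_\al}e_\al$ acts on each weight vector as a nonzero scalar times $e_\al$, so $\Lc^{\perp}=(V\tp Z)^{+}$, and the double-orthogonal step $\Lc^{\perp\perp}=\Lc$ is legitimate because $\Lc$ is a sum of weight subspaces, the canonical form pairs each weight space of $V\tp Z$ non-degenerately with itself, and every such weight space is finite-dimensional (finitely many ways to split $\bt\in\Z_+\Pi$ as $\bt_1+\bt_2$, and $V$, $Z$ have finite-dimensional weight spaces). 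This reduces the theorem to the clean statement that $V\tp Z$ is completely reducible iff $(V\tp Z)^{+}\cap\Lc=0$, and from there your two implications go through: the ``only if'' direction is a weight argument inside each direct summand, and the ``if'' direction correctly shows first that $U_q(\g_-)(V\tp Z)^+$ exhausts $V\tp Z$ (else its orthocomplement is a nonzero submodule, hence contains a singular vector, which would lie in $(V\tp Z)^{+}\cap\Lc$), and second that each cyclic highest-weight submodule $U_q(\g_-)w$ is irreducible because its maximal proper submodule sits inside $\Lc$ and so contains no singular vector. The auxiliary facts you invoke --- that $N^\perp$ is a submodule for submodule $N$, that $(N^\perp)^\perp=N$, and that every nonzero submodule of $V\tp Z$ contains a singular vector --- are all correct and all traceable to contravariance, the weight-space pairing, and the boundedness-from-above of $\La(V\tp Z)$. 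In short, this is a valid, self-contained proof in what is the natural (and almost certainly the original) approach: identify the radical of the restricted form with $(V\tp Z)^{+}\cap\sum_\al f_\al(V\tp Z)$ and read off both directions.
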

\noindent
Note that the form is non-degenerate on the entire $V\tp Z$ but may not be so on $(V\tp Z)^+$.

To compute the restricted  form, we parameterize $(V\tp Z)^+$ with
a vector space $V^+_Z=\Hom_{U_q(\g_+)}(Z^*,V)$.
We identify  it with a subspace in $V$
annihilated by the left ideal $I^+_Z\subset U_q(\g_+)$ that kills the lowest vector in $Z^*$.
The annihilator $V^\perp_Z$ of $V^+_Z$ with respect to the contravariant form coincides with $\omega(I^+_Z)V$.
The linear map $\bar \dt\colon V\tp Z\to V$, $\bar \dt\colon v\tp z\mapsto v\langle 1_Z,z\rangle$
yields an isomorphism  $(V\tp Z)^+\to V^+_Z$. We denote by $\dt$ the inverse isomorphism.

Regard $Z$ as a module over $U_q(\g_-)$ and its right dual module ${}^*\!Z$ as one over $U_q(\g_+)$.
Denote by $\Fc\in U_q(\g_+)\hat \tp U_q(\g_-)$ a lift of  $\Sc\in {}^*\!Z\hat \tp Z$
under a linear section of the $U_q(\g_+)\tp U_q(\g_-)$-module
homomorphisms.
The element $\Theta_Z=\gamma^{-1}(\Fc_2)\Fc_1$ belongs to a certain extension of $U_q(\g)$
and gives rise to a linear map $\theta_{V,Z}\colon V^+_Z\to V/V^\perp_Z$ by
$$
\bigl \langle \theta_{V,Z}(v),w \bigr \rangle =\bigl \langle \Theta_Z(v),w \bigr\rangle,
$$
which is independent of the choice of lift $\Fc$ for $\Sc$.
\begin{propn}[\cite{M1}]
\label{V-Z-extr}
  The form $\bigl\langle \theta_{V,Z}(\>.\>),\>.\>\bigr\rangle$ is the pullback of the canonical form
  under the isomorphism $V^+_Z\to (V\tp Z)^+$.
\end{propn}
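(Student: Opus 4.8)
The plan is to realize the isomorphism $\dt\colon V^+_Z\to(V\tp Z)^+$ explicitly through the lift $\Fc$, substitute it into the canonical form, and recognize the inverse-form identity for $\Sc$ together with the description of $V^+_Z$ as an annihilator. Throughout I identify $Z^*$ with the opposite module $Z'$ (legitimate since $Z$ is irreducible), so that the orbit map $U_q(\g_+)\to Z'$, $u\mapsto u\cdot 1_{Z'}$, has kernel $I^+_Z$, and for $v\in V^+_Z$ the rule $\phi_v(u\cdot 1_{Z'})=u\,v$ defines a $U_q(\g_+)$-homomorphism $\phi_v\colon Z'\to V$, namely the one attached to $v$ under $V^+_Z\cong\Hom_{U_q(\g_+)}(Z^*,V)$. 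The first step is to prove
$$
\dt(v)=\Fc_1\,v\tp\Fc_2\,1_Z\qquad(v\in V^+_Z),
$$
the components of $\Fc$ being summed. This is a finite sum --- $\Fc_1$ runs over positive weights while $V$ is a highest weight module --- hence a genuine element of $V\tp Z$, and it does not depend on the choice of lift because $\ker(U_q(\g_+)\to Z')=I^+_Z$ kills $V^+_Z$ while $\ker(U_q(\g_-)\to Z)$ kills $1_Z$. To see $\Fc_1 v\tp\Fc_2 1_Z\in(V\tp Z)^+$, apply $\phi_v\tp\id_Z$ to the identity $e_\al\cdot\Sc=0$ in $Z'\hat\tp Z$, valid because $\Sc$ is the $U_q(\g)$-invariant inverse form (Proposition~\ref{lift is singular}); using $\phi_v(e_\al\Sc_1)=e_\al\Fc_1 v$, $\Sc_2=\Fc_2 1_Z$ and $\Delta(e_\al)=e_\al\tp q^{h_\al}+1\tp e_\al$, the left hand side becomes $e_\al\cdot(\Fc_1 v\tp\Fc_2 1_Z)$, so this vector is $U_q(\g_+)$-invariant. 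Since $\bar\dt$ is already known to be an isomorphism $(V\tp Z)^+\to V^+_Z$, it then suffices to check $\bar\dt(\Fc_1 v\tp\Fc_2 1_Z)=v$; this holds because weight spaces of $Z$ are orthogonal under the contravariant form, so only the weight-zero leg of $\Fc$ --- equal to $1\tp 1$ by the normalization $\Sc_1\langle 1_Z,\Sc_2\rangle=1_{Z'}$ --- pairs nontrivially with $1_Z$, and $\langle 1_Z,1_Z\rangle=1$.

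Given the explicit form of $\dt(v)$, the rest is formal. For $v,v'\in V^+_Z$, expanding the product form gives
$$
\langle\dt(v),\dt(v')\rangle=\sum\langle\Fc_1 v,\Fc'_1 v'\rangle_V\,\langle\Fc_2 1_Z,\Fc'_2 1_Z\rangle_Z ,
$$
where $\Fc'$ denotes a second copy of the lift. Since $v'\in V^+_Z$ one has $\Fc'_1 v'=\phi_{v'}(\Fc'_1\cdot 1_{Z'})$, so the sum over $\Fc'$ equals $\phi_{v'}$ applied to $\sum\langle\Fc_2 1_Z,\Fc'_2 1_Z\rangle_Z\,\Fc'_1\cdot 1_{Z'}$, which by the inverse-form identity $\sum\Sc_1\langle\Sc_2,w\rangle_Z=\si_Z(w)$ extracted from the proof of Proposition~\ref{lift is singular} (at $w=\Fc_2 1_Z$) equals $\phi_{v'}\bigl(\si_Z(\Fc_2 1_Z)\bigr)=\phi_{v'}\bigl(\si(\Fc_2)\,1_{Z'}\bigr)=\si(\Fc_2)\,v'$. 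Hence $\langle\dt(v),\dt(v')\rangle=\sum\langle\Fc_1 v,\si(\Fc_2)\,v'\rangle_V$. By symmetry and contravariance of $\langle\cdot,\cdot\rangle_V$ one has $\langle\Fc_1 v,\si(\Fc_2)v'\rangle_V=\langle\omega(\si(\Fc_2))\Fc_1 v,v'\rangle_V$, and $\omega\circ\si=\gm^{-1}$ (since $\omega=\gm^{-1}\circ\si$ and $\si^2=\id$), so the right hand side becomes $\sum\langle\gm^{-1}(\Fc_2)\Fc_1 v,v'\rangle_V=\langle\Theta_Z v,v'\rangle=\langle\theta_{V,Z}(v),v'\rangle$, which is the assertion.

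The hard part is the first step, the identity $\dt(v)=\Fc_1 v\tp\Fc_2 1_Z$. It bundles several points that each need care: matching $\ker(U_q(\g_+)\to Z')$ with the ideal $I^+_Z$, hence the well-definedness of $\phi_v$ and of the formula itself, which rests on the characterization of $V^+_Z$ recalled just before the statement; pushing the $U_q(\g_+)$-invariance of $\Sc$ --- an identity in a completed tensor product that also involves the Cartan factors $q^{h_\al}$ --- through $\phi_v\tp\id$, with the bookkeeping that keeps every sum finite in each weight (the weights of $V$ are bounded above and those of $Z'$ bounded below, so $\phi_v$ annihilates all but finitely many legs of $\Sc$); and keeping the three duals $Z'$, $Z^*$, ${}^*\!Z$ consistently identified, which is harmless since $Z$ is irreducible. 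Once that is settled, the computation in the previous paragraph is a brief sequence of formal manipulations.
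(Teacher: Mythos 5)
The paper states this proposition with a citation to \cite{M1} and does not reproduce a proof, so there is no in-paper argument to compare against. Your proof is correct and is the natural one: the key identity $\dt(v)=\Fc_1 v\tp\Fc_2 1_Z$ is established soundly (well-definedness via $\ker(U_q(\g_+)\to Z')=I^+_Z$ killing $V^+_Z$, singularity via pushing $U_q(\g_+)$-invariance of $\Sc$ through $\phi_v\tp\id$, and $\bar\dt$-normalization from the weight-zero leg), and the subsequent computation correctly uses the inverse-form identity from Proposition~\ref{lift is singular}, the relation $\si_Z(f1_Z)=\si(f)1_{Z'}$, symmetry and contravariance of the form, and $\omega\circ\si=\gm^{-1}$ to land on $\langle\Theta_Z v,v'\rangle=\langle\theta_{V,Z}(v),v'\rangle$.
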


The vector space $V/V^\perp_Z$ can be identified with a subspace ${}^+\!V_Z\subset V$ that is transversal to $V^\perp_Z$
since the contravariant form  on $V$ is non-degenerate. If it is non-degenerate when restricted to $V^+_Z$, then $V=V^+_Z\op V^\perp_Z$, and one can set ${}^+\!V_Z=V^+_Z$.
Then the linear map $\theta_{V,Z}$ becomes an operator from $\End(V^+_Z)$.

\subsection{Braid group action on $U_q(\g)$ and a  Cartan-Weyl basis}
The algebra $U_q(\g)$ admits a Poincar\'{e}-Birkhoff-Witt (BPW)-like basis of ordered monomials in
"root vectors", which are constructed from the generators via an action
of the braid group, \cite{CP}, Ch.8.1. We need this  basis to
write extremal projectors of $U_q(\g)$ in next sections.

Define
$m_{\al \bt}=2,3,4,6$ for $\al,\bt \in \Pi$ if the entries of the Cartan matrix satisfy $a_{\al \bt}a_{\bt \al}=0,1,2,3$, respectively.
The braid group $\Bc_\g$ associated with $\g$ is generated by
elements $T_\al$, $\al \in \Pi$,  subject to the relations
$(T_\al T_\bt)^{m_{\al \bt}}=(T_\bt T_\al)^{m_{\al \bt}}$, $\al \not =\bt$.

The group $\Bc_q$ admits a homomorphism onto the Weyl group $\Wc$ with the kernel
generated by the relations $T_\al^2=1$, $\al \in \Pi$, and sending
$T_\al$ to the simple reflections $\si_\al \in \Wc$.
The length $\ell(T)$ of an element of $T\in \Bc_\g$ is defined as the minimal number of generators
in a presentation of $T$, which is called a reduced decomposition of $T$. The length
of an element of a Weyl group is defined similarly, as the number of simple reflections in a reduced
decomposition.
There is a length preserving section of the surjection $\Bc_\g\to \Wc$, which is a map of sets.

Define a $T_\al$-action  on generators of the quantum group by
$$
\quad T_\al(f_\al)=-q^{-h_\al}e_\al ,\quad T_\al(q^{h_\bt})=q^{h_\bt-a_{\al\bt}h_\al} ,  \quad T_\al(e_\al)=-f_\al q^{h_\al},
$$
$$
\quad T_\al(e_\bt)=\sum_{k=0}^{-a_{\al \bt}} \frac{(-1)^{a_{\al \bt}-k}q^{-k}}{[k]_q![-a_{\al \bt}-k]_q!}e_{ \al}^{-a_{\al\bt}-k}e_{\bt}e_{ \al}^{k}, \quad \al \not =\bt,
$$
$$
\quad T_\al(f_\bt)=\sum_{k=0}^{-a_{\al \bt}} \frac{(-1)^{a_{\al \bt}-k}q^{k}}{[k]_q![-a_{\al \bt}-k]_q!}f_{ \al}^{k}f_{\bt}f_{ \al}^{-a_{\al\bt}-k}, \quad \al \not =\bt.
$$
It  extends to an algebra automorphism of $U_q(\g)$. The operators $\{T_{\al}\}_{\al \in \Pi}$ amount to
an action of $\Bc_\g$ on $U_q(\g)$.

\begin{propn}[\cite{CP}, Prop. 8.1.6]
  Let $w\in \Wc$ be such that $\bt=w(\al)\in \Pi$ for some simple root $\al$.
  Then $T_\omega(e_{\al})=e_{\bt}$ and $T_\omega(f_{\al})=f_{\bt}$.
\label{braid_simple}
\end{propn}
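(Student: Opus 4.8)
The plan is to reduce the claim to the case of a single generator $T_\al$ and then argue by induction on the length $\ell(w)$, using the braid relations to control how a reduced word for $w$ interacts with the simple root $\al$. So first I would record the defining formulas for the $T_\al$-action on the generators: on $f_\al$ and $e_\al$ the operator $T_\al$ swaps them (up to the Cartan-weight factor $q^{\pm h_\al}$), and on $e_\bt$, $f_\bt$ with $\bt\neq\al$ it is the divided-power expression displayed above. The key elementary observation is that if $\al,\bt\in\Pi$ are distinct simple roots, then $T_\al(e_\bt)$ is the appropriate $q$-commutator root vector $e_{\si_\al(\bt)}$ attached to the positive root $\si_\al(\bt)\in\Rm^+$, and similarly for $f$; this is essentially the normalization built into the PBW/Cartan-Weyl construction and I would cite it from \cite{CP}, Ch.~8.1.

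Next I would set up the induction. Write $w=\si_{\gamma_1}\cdots\si_{\gamma_\ell}$ a reduced decomposition, with the matching reduced decomposition $T_w=T_{\gamma_1}\cdots T_{\gamma_\ell}$ under the length-preserving section. The base case $\ell=1$ is exactly the elementary observation of the previous paragraph: if $w=\si_\gamma$ and $\si_\gamma(\al)\in\Pi$ then necessarily $\gamma\neq\al$ (since $\si_\al(\al)=-\al\notin\Pi$), and $T_\gamma(e_\al)=e_{\si_\gamma(\al)}$ by that computation. For the inductive step, given $w$ with $w(\al)=\bt\in\Pi$, pick the last letter $\gamma=\gamma_\ell$ of a reduced word. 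Put $w'=w\si_\gamma$, so $\ell(w')=\ell-1$ and $w=w'\si_\gamma$. One of two things happens: either $\si_\gamma(\al)\in\Pi$, in which case we set $\al'=\si_\gamma(\al)$ and have $T_\gamma(e_\al)=e_{\al'}$ by the base case, while $w'(\al')=w(\al)=\bt\in\Pi$ with $\ell(w')<\ell$, so the inductive hypothesis gives $T_{w'}(e_{\al'})=e_\bt$, whence $T_w(e_\al)=T_{w'}T_\gamma(e_\al)=T_{w'}(e_{\al'})=e_\bt$; or $\si_\gamma(\al)\notin\Pi$. In the second case I would argue that this cannot occur for the last letter of a reduced word realizing $w(\al)\in\Pi$: if $\al=\gamma$ then $\si_\gamma(\al)=-\al<0$ and $w$ sends a positive root to a positive root only if $w'$ sends $-\al$ to a positive root, i.e. $w'$ sends $\al$ to a negative root — but since $\ell(w'\si_\al)=\ell(w)>\ell(w')$ we have $w'(\al)>0$, a contradiction; and if $\al\neq\gamma$ but $\si_\gamma(\al)$ is a non-simple positive root, one replaces the reduced word by another reduced word for $w$ whose last letter is admissible, which exists because among the $\ell$ letters of any reduced word the simple roots appearing are exactly those $\gamma$ with $\ell(w\si_\gamma)<\ell(w)$, and for at least one such choice $\si_\gamma(\al)$ is simple (this is the standard fact that a reflection moving $\al$ to a simple root can be peeled off one simple reflection at a time). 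The statement for $f_\al$ follows verbatim, or more cheaply by applying the Chevalley-type involution $\si$ of \eqref{sigma}, which intertwines the two sides.

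The main obstacle is the bookkeeping in the inductive step — precisely, showing that a reduced word for $w$ can always be chosen so that its last simple reflection $\si_\gamma$ satisfies $\si_\gamma(\al)\in\Pi$, i.e. that the "descent" from $w$ to $w'$ keeps $\al$ on the set of roots the braid operator handles cleanly. This is a purely Weyl-group fact about reduced words and inversion sets, and I would isolate it as a short combinatorial lemma (or quote it from a standard reference) rather than reprove it; once it is in hand, the computation on the quantum-group side is just the two-line composition $T_w=T_{w'}T_\gamma$ together with the base-case identity $T_\gamma(e_\al)=e_{\si_\gamma(\al)}$. I do not expect any $q$-dependent subtlety: the divided-power formulas are exactly the ones that make the Cartan-Weyl root vectors well defined, so no root of unity issue or normalization ambiguity intrudes.
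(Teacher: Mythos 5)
The paper offers no proof of this proposition---it is imported verbatim from Chari--Pressley \cite{CP}, Prop.~8.1.6---so the question is only whether your argument stands on its own. It does not: the inductive step has a genuine gap. You reduce to showing that a reduced word for $w$ can be chosen so that its last letter $\si_\gamma$ already sends $\al$ to a simple root, and you assert that this is ``the standard fact that a reflection moving $\al$ to a simple root can be peeled off one simple reflection at a time.'' That is false. Take $\g=\s\l(3)$, $w=\si_1\si_2$, $\al=\al_1$. Then $w(\al_1)=\al_2\in\Pi$, but $w$ has a \emph{unique} reduced word $\si_1\si_2$, the only admissible last letter is $\gamma=\al_2$ (since $w(\al_1)>0$ but $w(\al_2)<0$), and $\si_{\al_2}(\al_1)=\al_1+\al_2\notin\Pi$. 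So there is no reduced word for $w$ ending in a $\gamma$ with $\si_\gamma(\al)\in\Pi$, and your induction never gets off the ground. In fact your base case only ever fires in the degenerate situation $(\al,\gamma)=0$, since for distinct simple $\al\neq\gamma$ one has $\si_\gamma(\al)=\al-(\al,\gamma^\vee)\gamma$, which lies in $\Pi$ only when the pairing vanishes (and then $\si_\gamma(\al)=\al$ and $T_\gamma(e_\al)=e_\al$ trivially). The genuine content of the statement is precisely the non-orthogonal case, and it cannot be reached one letter at a time.

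The correct route, as in \cite{CP} (or Jantzen, \emph{Lectures on Quantum Groups}, Ch.~8), is to reduce to rank-two subsystems: one first verifies $T_w(e_\al)=e_\bt$ by explicit computation when $w$ is (a product inside) the rank-two parabolic subgroup $\langle\si_\al,\si_\gamma\rangle$ with $w(\al)\in\{\al,\gamma\}$, using the braid relation to rewrite $T_\al T_\gamma T_\al\cdots$, and then inducts using a Weyl-group lemma that lets one split $w$ as $w'v$ with $v$ in a rank-two parabolic containing $\si_\al$ and $w'$ shorter with $w'$ applied to a \emph{simple} root. That lemma, not the ``peel one letter'' heuristic, is what controls the combinatorics. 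A smaller point: to deduce the $f$ statement from the $e$ statement you should use the involution $\tilde\omega$ of \eqref{tilde-omega}, which the paper notes commutes with the $\Bc_\g$-action; the map $\si$ of \eqref{sigma} does not intertwine $T_\al$ (compare $\si(T_\al(e_\al))=-e_\al q^{-h_\al}$ with $T_\al(\si(e_\al))=-q^{-h_\al}e_\al$).
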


Let $\si_{i_1}\ldots \si_{i_N}$,
where $\si_i=\si_{\al_i}$ and  $N=\#\Rm^+$, be
a reduced decomposition of the longest element of $\Wc$.
Define a sequence of positive roots by
$$
\mu^1=\al_{i_1},\quad \mu^2=\si_1(\al_{i_2}), \quad \ldots \quad \mu^N=\si_1\ldots \si_{N-1}(\al_{i_N}).
$$
This sequence induces a total ordering on $\Rm^+$, called normal,  such that any root of the form $\al+\bt$ with $\al,\bt\in \Rm^+$
is between $\al$ and $\bt$.
 Any subset $\tilde\Pi \subset \Pi$ generates a root subsystem  $\tilde \Rm\subset \Rm$.
There is a normal ordering where all roots form $\tilde \Rm^+$ are on the right of the roots from $\Rm^+\backslash \tilde \Rm^+$,
see e.g. \cite{Zh}, Exercise 1.7.10.
We will use this fact in Section \ref{Sec_Parabolic} in relation with Levi subalgebras.

A Cartan-Weyl basis in $U_q(\g)$ depends on a normal ordering and is defined as follows. The root $\mu^1$ is simple, so  $e_{\mu^1}$
and $f_{\mu^1}$ are the corresponding
Chevalley generators. For $k>1$ set
$$
e_{\mu^k}=T_{\al_{i_1}}\circ \ldots \circ T_{\al_{i_{k-1}}}(e_{\al_{i_k}}), \quad
f_{\mu^k}=T_{\al_{i_1}}\circ \ldots \circ T_{\al_{i_{k-1}}}(f_{\al_{i_k}}).
$$
Proposition \ref{braid_simple} guarantees that the simple root generators are included in this set.
It is known that normally ordered monomials in these elements  deliver a PBW basis in, respectively,  $U_q(\g_\pm)$ when $q$ is not a root
of unity.

Regarding $U_q(\g)$ as a $\C[q,q^{-1}]$-algebra, define an anti-automorphism $\tilde\omega$ by
\be
\tilde \omega\colon e_\al\mapsto f_\al, \quad\tilde \omega\colon f_\al\mapsto e_\al, \quad \tilde \omega\colon q^{h_\al}\mapsto q^{-h_\al},
\quad
\tilde \omega\colon q\mapsto q^{-1}.
\label{tilde-omega}
\ee
It commutes with the action of $\Bc_\g$.
\subsection{Properties of the Cartan-Weyl basis}
  For each $\mu\in \Rm^+$, one has
$
[e_\mu,f_\mu]=a_\mu\frac{q^{h_\mu}-q^{-h_\mu}}{q_\mu-q^{-1}_\mu}
$
for some $a_\mu\in \C^\times$. This relation facilitates an embedding $\iota_\mu\colon U_q\bigl(\s\l(2)\bigr)\to U_q(\g)$
determined by the assignment
$$
f\mapsto \frac{1}{a_\mu}f_\mu, \quad  e\mapsto e_\mu, \quad q^{h}\mapsto q^{h_\mu}, \quad
q\mapsto q_\mu,
$$
where $e,f,q^{h}$ are the standard generators of $U_q\bigl(\s\l(2)\bigr)$
satisfying  $q^h eq^{-h}=q^2 e $, $q^h fq^{-h}=q^{-2} f$, and $[e,f]=[h]_q$.
We denote by $U_q(\g^\mu)$ the image of $\iota_\mu$ under this embedding.

For $\al, \bt \in \Rm^+$ such that $\al<\bt$, denote by  $U^+_{\al,\bt}$ the $U_q(\h)$-submodule in $U_q(\g)$ under the
multiplication generated by the monomials $e_{\mu^i}^{k_i}\ldots e_{\mu^j}^{k_j}$
with $\al\leqslant \mu^i<\ldots<\mu^j\leqslant \bt $  and $\sum_{s}k_s>0$.
We set $U^-_{\al, \bt}=\tilde \omega (U^+_{\al, \bt})$ and denote $U^\pm_{\al\leqslant }=U^\pm_{\al,\mu^N}$ and $U^\pm_{\leqslant \al}=U^\pm_{\mu^1,\al}$.
We will also use the obvious notation $U^\pm_{< \al}$ and $U^\pm_{\al <}$
involving only the root vectors starting with the roots next to $\al$.

Given two vector subspaces in $A,B\subset U_q(\g)$ we denote $A\bullet B=A+B+AB$, where the last term is the linear span of products of
elements form $A$ and $B$.
\begin{propn}
The $U_q(\h)$-modules  $U^\pm_{\leqslant \al}$, $U^\pm_{\al \leqslant}$,
$U^-_{\leqslant \al}\bullet  U^+_{\bt \leqslant}$, and  $U^-_{\bt \leqslant}\bullet  U^+_{\leqslant \al}$ with  $\al<\bt$ are associative (non-unital) subalgebras in $U_q(\g)$
\label{PBW com_rel}
\end{propn}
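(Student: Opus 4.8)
The plan is to reduce everything to the commutation relations among the Cartan–Weyl root vectors and then argue by closure under multiplication. The basic input is the standard fact (Levendorskii–Soibelman type relations, see \cite{CP}, Ch.~8) that for $\al<\bt$ in the chosen normal ordering, one has
$$
e_\bt e_\al - q^{-(\al,\bt)} e_\al e_\bt \in \Span\bigl\{ e_{\mu^{i_1}}^{k_1}\cdots e_{\mu^{i_r}}^{k_r} : \al<\mu^{i_1}<\cdots<\mu^{i_r}<\bt,\ \textstyle\sum_s k_s\geqslant 2\bigr\},
$$
and similarly, under $\tilde\omega$, for the $f_\bt f_\al$ products; also $q^{h_\nu} e_\mu = q^{(\nu,\mu)} e_\mu q^{h_\nu}$ and likewise for $f_\mu$. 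The first step is to record these relations, noting in particular that the right-hand side above lies in $U^+_{<\bt}\cap U^+_{\al<}\subset U^+_{\al,\bt}$ (when $\al<\bt$) and that the $U_q(\h)$-factors only contribute invertible scalars when moved past root vectors.

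Next I would handle $U^+_{\leqslant\al}$ and $U^+_{\al\leqslant}$ (hence $U^-_{\leqslant\al}$, $U^-_{\al\leqslant}$ by applying $\tilde\omega$). Each is a $U_q(\h)$-module spanned by normally ordered monomials in a contiguous block of the $e_{\mu^i}$'s together with $\sum_s k_s>0$; to see it is a subalgebra one multiplies two such monomials and uses the relations above repeatedly to re-order the middle (the tail of the first monomial against the head of the second), each reordering step producing only monomials supported on roots lying strictly between, hence still in the same contiguous block, with total degree non-decreasing so the condition $\sum k_s>0$ is preserved. The $U_q(\h)$-action is absorbed because moving a $q^{h_\nu}$ past the $e_\mu$'s only rescales. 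This is essentially the PBW straightening argument restricted to a sub-block, and it closes.

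For the mixed products $U^-_{\leqslant\al}\bullet U^+_{\bt\leqslant}$ and $U^-_{\bt\leqslant}\bullet U^+_{\leqslant\al}$ with $\al<\bt$, the point is that $A\bullet B = A+B+AB$ is automatically closed under multiplication once $A$ and $B$ are each subalgebras \emph{and} $BA\subseteq A\bullet B$: indeed $(a_1+b_1+a_1b_1)(a_2+b_2+a_2b_2)$ expands into a sum of terms each of which, after pushing all $B$-factors to the right of all $A$-factors using $BA\subseteq A+B+AB$, lands in $A+B+AB$ again (here one uses that $A$ and $B$ are themselves closed). So it remains to check $U^+_{\bt\leqslant}\cdot U^-_{\leqslant\al}\subseteq U^-_{\leqslant\al}\bullet U^+_{\bt\leqslant}$ and the analogous inclusion for the other pair. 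This is where the hypothesis $\al<\bt$ is used: an $e_\mu$ with $\mu\geqslant\bt$ and an $f_\nu$ with $\nu\leqslant\al<\bt$ commute up to a correction term governed by the cross relations $[e_\mu,f_\nu]$ — and because $\mu$ and $\nu$ lie in \emph{disjoint} blocks separated by the gap between $\al$ and $\bt$, these corrections involve only root vectors from the respective blocks (and $U_q(\h)$), so that after straightening one stays inside $U^-_{\leqslant\al}\bullet U^+_{\bt\leqslant}$; the $q^{h}$ produced is harmless since $U_q(\h)$ normalizes each block. The main obstacle, and the only place requiring genuine care, is precisely this last commutation analysis: one must verify that the ``defect'' terms $[e_\mu,f_\nu]$ for $\mu$ in the upper block and $\nu$ in the lower block never spill root vectors into the forbidden middle range $(\al,\bt)$. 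This follows from the support control in the Levendorskii–Soibelman relations together with the normal-ordering property that $\al+\bt$-type sums sit between their summands, but it is the step I would write out most carefully.
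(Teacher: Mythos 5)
Your proposal is correct and takes essentially the same route as the paper: it reduces the claim to the Khoroshkin--Tolstoy/Levendorskii--Soibelman commutation relations among Cartan--Weyl root vectors (including the cross relations $[e_\mu,f_\nu]$ whose support lies strictly below the lower root and strictly above the upper one), uses $\tilde\omega$ to transfer from the $e$'s to the $f$'s, and then closes by a PBW straightening argument; the paper simply cites these relations from \cite{KT} and declares the rest straightforward, while you spell out the $A\bullet B$ closure lemma and the block-support bookkeeping a bit more explicitly.
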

\begin{proof}
Set  $\mu^i=\al$ and $\mu^j=\bt$ with  $i<j$ and put $\al'=\mu^{i+1}$ and $\bt'=\mu^{j-1}$, so that $\al<\al'\leqslant \bt'<\bt$.
Then the following relations hold, \cite{KT}:
$$
[e_\al,e_\bt]_{q^{(\al,\bt)}}\in U^+_{\al',\bt'},
\quad
[f_\bt,f_\al]_{\bar q^{(\al,\bt)}}\in  U^-_{\al',\bt'},
\quad
[e_\bt,f_\al]\in  U^-_{<,\al}\bullet  U^+_{\bt,<},
\quad
[e_\al,f_\bt]\in  U^-_{\bt, <}\bullet  U^+_{<,\al}.
$$
Here and further on we use the shortcut $\bar q=q^{-1}$.
Note that the second and fourth inclusions are obtained from the first and third by applying the automorphism $\tilde \omega$,
which flips $U_{\mu,\nu}^+$ and $U_{\mu,\nu}^-$.
Now the proof is straightforward.
\end{proof}
Note that these algebras have trivial subspace of zero weight.
\begin{propn}
\label{M-F}
  For all $\mu\in \Rm^+$,
$$
\Delta(e_\mu)\in e_\mu\tp q^{h_\mu} + 1\tp e_\mu + U^+_{\mu< }\tp U^+_{< \mu}, \quad
\Delta(f_\mu)\in f_\mu\tp 1 +  q^{-h_\mu}\tp f_\mu+U^-_{<\mu}\tp U^-_{\mu<}.
$$
\end{propn}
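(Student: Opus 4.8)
The plan is to establish the inclusion for $\Delta(e_\mu)$ by induction on the height $\hth(\mu)$, and to deduce the one for $\Delta(f_\mu)$ from it by means of the anti-automorphism $\tilde\omega$. For the latter step, recall that $\tilde\omega$ commutes with the action of $\Bc_\g$ and sends each Chevalley generator $e_\al$ to $f_\al$, so $\tilde\omega(e_\mu)=f_\mu$ for every $\mu\in\Rm^+$, while by construction $\tilde\omega(U^+_{<\mu})=U^-_{<\mu}$ and $\tilde\omega(U^+_{\mu<})=U^-_{\mu<}$. Since $\si$ is a coalgebra anti-automorphism and the coproduct \eqref{coprod} involves no explicit power of the scalar $q$, one checks on the generators that $\Delta\circ\tilde\omega=(\tilde\omega\tp\tilde\omega)\circ\tau\circ\Delta$, where $\tau$ flips the tensor factors. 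Applying $(\tilde\omega\tp\tilde\omega)\circ\tau$ to the relation $\Delta(e_\mu)\in e_\mu\tp q^{h_\mu}+1\tp e_\mu+U^+_{\mu<}\tp U^+_{<\mu}$ then produces exactly $\Delta(f_\mu)\in f_\mu\tp 1+q^{-h_\mu}\tp f_\mu+U^-_{<\mu}\tp U^-_{\mu<}$.

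For the induction on $\hth(\mu)$: if $\mu$ is simple then $\Delta(e_\mu)=e_\mu\tp q^{h_\mu}+1\tp e_\mu$ by \eqref{coprod}, and $0\in U^+_{\mu<}\tp U^+_{<\mu}$. If $\hth(\mu)\geq 2$, I would choose a minimal pair: positive roots $\al,\bt$ with $\al+\bt=\mu$ and $\al<\mu<\bt$ in the normal ordering, such that $[e_\al,e_\bt]_{q^{(\al,\bt)}}=c_\mu\,e_\mu$ with $c_\mu\in\C^\times$ (cf. \cite{CP,KT}). Both $\al$ and $\bt$ then have height strictly below $\hth(\mu)$, so the inductive hypothesis gives $\Delta(e_\al)=e_\al\tp q^{h_\al}+1\tp e_\al+R_\al$ and $\Delta(e_\bt)=e_\bt\tp q^{h_\bt}+1\tp e_\bt+R_\bt$ with $R_\al\in U^+_{\al<}\tp U^+_{<\al}$ and $R_\bt\in U^+_{\bt<}\tp U^+_{<\bt}$. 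Since $\Delta$ is an algebra homomorphism,
\[
\Delta(e_\mu)=\frac{1}{c_\mu}\bigl[\Delta(e_\al),\Delta(e_\bt)\bigr]_{q^{(\al,\bt)}}
=\frac{1}{c_\mu}\bigl[e_\al\tp q^{h_\al}+1\tp e_\al+R_\al,\ e_\bt\tp q^{h_\bt}+1\tp e_\bt+R_\bt\bigr]_{q^{(\al,\bt)}} .
\]

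Expanding this bracket into nine terms and using $q^{h_\al}q^{h_\bt}=q^{h_\mu}$ together with $q^{h_\al}e_\bt=q^{(\al,\bt)}e_\bt q^{h_\al}$, the two "diagonal" brackets $[e_\al\tp q^{h_\al},e_\bt\tp q^{h_\bt}]$ and $[1\tp e_\al,1\tp e_\bt]$ reproduce $c_\mu\,e_\mu\tp q^{h_\mu}$ and $c_\mu\,1\tp e_\mu$; the bracket $[e_\al\tp q^{h_\al},1\tp e_\bt]_{q^{(\al,\bt)}}$ vanishes identically — which is precisely why the $q$-commutator is taken with the parameter $q^{(\al,\bt)}$ — and $[1\tp e_\al,e_\bt\tp q^{h_\bt}]_{q^{(\al,\bt)}}=(q^{-(\al,\bt)}-q^{(\al,\bt)})\,e_\bt\tp q^{h_\bt}e_\al$ already lies in $U^+_{\mu<}\tp U^+_{<\mu}$, since $e_\bt$ is a root vector attached to a root $\bt>\mu$ while $q^{h_\bt}e_\al\in U_q(\h)e_\al\subset U^+_{<\mu}$ because $\al<\mu$.

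The hard part, and the step on which the argument hinges, is the five remaining brackets, those involving $R_\al$ or $R_\bt$. A priori their tensor factors contain root vectors lying on the \emph{wrong} side of $\mu$: the left tensorand of $R_\al$ is only controlled in $U^+_{\al<}$, not in $U^+_{\mu<}$, and dually the right tensorand of $R_\bt$ in $U^+_{<\bt}$ is not obviously in $U^+_{<\mu}$, so the inclusion cannot be read off term by term. I would expand each of these brackets and straighten every resulting product of root vectors into the PBW basis by means of the Levendorski\v{\i}--Soibelman relations of Proposition \ref{PBW com_rel}, using convexity of the normal ordering (a sum of positive roots lies, in that ordering, between its smallest and largest summands); the wrong-side contributions are then forced to cancel and what survives falls into $U^+_{\mu<}\tp U^+_{<\mu}$. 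A convenient device for certifying this cancellation is that $\Delta(e_\mu)$ is, for purely structural reasons ($\Delta$ is a counital algebra homomorphism and $\Delta(e_\al)$ has the form \eqref{coprod}), supported only on $e_\mu\tp q^{h_\mu}$, on $1\tp e_\mu$, and on tensors both of whose legs carry strictly positive weight, which leaves little room for the outcome of the straightening. Carrying out this computation, dividing by $c_\mu$, and invoking the $\tilde\omega$-argument of the first paragraph finishes the proof.
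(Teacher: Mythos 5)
The reduction of the $f_\mu$ statement to the $e_\mu$ statement via $\tilde\omega$ is sound and is also how the paper does it. But your treatment of $\Delta(e_\mu)$ has a genuine gap, and it is precisely where you flag ``the hard part.''

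Your plan is an induction on height, decomposing $\mu=\al+\bt$ with $\al<\mu<\bt$ and $[e_\al,e_\bt]_{q^{(\al,\bt)}}=c_\mu e_\mu$, and then expanding the $q$-commutator of the coproducts. The inductive hypothesis places $R_\al\in U^+_{\al<}\tp U^+_{<\al}$ and $R_\bt\in U^+_{\bt<}\tp U^+_{<\bt}$. Since $\al<\mu<\bt$, the left leg of $R_\al$ is only controlled in $U^+_{\al<}$, which contains root vectors for roots strictly between $\al$ and $\mu$, hence \emph{not} in the target $U^+_{\mu<}$; dually the right leg of $R_\bt$ sits in $U^+_{<\bt}$, which contains roots between $\mu$ and $\bt$, again outside the target $U^+_{<\mu}$. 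You observe this yourself and then write ``the wrong-side contributions are then forced to cancel and what survives falls into $U^+_{\mu<}\tp U^+_{<\mu}$'' — but this cancellation is asserted, not demonstrated. The auxiliary ``device'' you offer (that by counitality $\Delta(e_\mu)$ is supported on $e_\mu\tp q^{h_\mu}$, $1\tp e_\mu$, and terms with both legs of positive weight) only constrains the weights, not the position of the root vectors relative to $\mu$ in the normal ordering, so it does not close the gap. As it stands the inductive step is not a proof.

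The paper takes a different and shorter route that avoids this difficulty entirely. It cites a conjugation formula from \cite{KT} (their Prop.\ 8.3): there is an invertible $\tilde\Ru_{<\mu}\in 1\tp 1+U^-_{<\mu}\hat\tp U^+_{<\mu}$ with $\Delta(e_\mu)=\tilde\Ru_{<\mu}(e_\mu\tp q^{h_\mu}+1\tp e_\mu)\tilde\Ru^{-1}_{<\mu}$. Since $\Delta(e_\mu)\in U_q(\g_+)\tp U_q(\b_+)$, one may work modulo the right ideal $J\tp J$ generated by the $f_\al$'s, which kills the left factor $\tilde\Ru_{<\mu}$ and the $U^-$-part coming from the first leg of $\tilde\Ru^{-1}_{<\mu}$ hitting $1\tp e_\mu$; what remains is $(e_\mu\tp q^{h_\mu})\tilde\Ru_{<\mu}^{-1}+1\tp e_\mu$, and moving the $U^-_{<\mu}$-leg of $\tilde\Ru^{-1}_{<\mu}$ past $e_\mu$ using the commutation relation $[e_\bt,f_\al]\in U^-_{<\al}\bullet U^+_{\bt<}$ from Proposition~\ref{PBW com_rel} puts the error term into $U^+_{\mu<}\tp U^+_{<\mu}$ directly. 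In other words, the paper externalizes the hard combinatorics into the known structure of $\tilde\Ru$, whereas your plan tries to redo that combinatorics by hand and leaves it unfinished. If you want to salvage the inductive approach you would have to actually carry out the PBW straightening of the five cross terms and exhibit the cancellations; otherwise, invoke the $\tilde\Ru$-formula as the paper does.
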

\begin{proof}
There is an invertible element $\tilde \Ru_\mu\in 1\tp 1+ U^-_{<\mu}\hat \tp U^+_{<\mu}$ such that
the coproduct $\Delta(e_\mu)$ can be expressed as
$$
 \Delta(e_\mu)=\tilde \Ru_{<\mu}(e_\mu \tp q^{h_\mu}+1\tp e_\mu)\tilde \Ru_{<\mu}^{-1}.
$$
This can be found in \cite{KT}, Proposition 8.3 (for a twisted coproduct as compared to ours, so our $\tilde \Ru$ is flipped).
As $\Delta(e_\mu)\in U_q(\g_+)\tp U_q(\b_+)$, it suffices to evaluate both  sides of this equality on the tensor product
of the right "universal Verma modules", i.e. the quotients of $U_q(\g)$ by the right ideal $J$ generated by $f_\al$, $\al \in \Pi$:
$$
\Delta(e_\mu)=(e_\mu \tp q^{h_\mu})\tilde \Ru_{<\mu}^{-1}+1\tp e_\mu \mod J\tp J.
$$
Pushing the left leg of $\tilde \Ru_{<\mu}^{-1}$ to the left with the use of the third inclusion from Proposition \ref{PBW com_rel} one proves the  left equality.
The other is obtained by applying  $\tilde \omega$,
which flips the comultiplication.
\end{proof}
\begin{propn}
  For any ordered sequence of positive roots $\mu^i<\ldots <\mu^k$
  the projection $\wp$ annihilates $U_{\mu^i\leqslant}^-\bullet U_{\leqslant\mu^i}^+\bullet \cdots \bullet U_{\mu^k\leqslant}^-\bullet U_{\leqslant \mu^k}^+$.
\label{absorbtion}
\end{propn}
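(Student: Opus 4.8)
The plan is to linearize the $\bullet$-products and then induct on the number of factors. Write $m=2(k-i+1)$ and list the blocks occurring in the expression, in order, as $C_1,\dots,C_m$, so that $C_{2s-1}=U^-_{\mu^{i+s-1}\leqslant}$ and $C_{2s}=U^+_{\leqslant\mu^{i+s-1}}$; I call such a block of type $U^-$ or $U^+$ accordingly and refer to $\mu^{i+s-1}$ as its root label. Since $\bullet$ is associative with $A\bullet B=A+B+AB$, the space in question is the linear span of the products $C_{j_1}C_{j_2}\cdots C_{j_l}$ over strictly increasing index sequences $j_1<\cdots<j_l$, so it suffices to prove $\wp(C_{j_1}\cdots C_{j_l})=0$, and I would argue by induction on $l$. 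Two elementary observations are used throughout. First, $U^-_{\mu\leqslant}\subseteq U_q(\g_-)^+U_q(\h)$ and $U^+_{\leqslant\mu}\subseteq U_q(\h)U_q(\g_+)^+$, since the Cartan--Weyl vectors $f_\nu$ (resp.\ $e_\nu$) lie in $U_q(\g_-)$ (resp.\ $U_q(\g_+)$) and carry a non-zero weight, and the surrounding $U_q(\h)$ can be pushed to one side; consequently $\wp$ annihilates $U^-_{\mu\leqslant}U_q(\g)$ as well as $U_q(\g)U^+_{\leqslant\mu}$, because $U_q(\g_-)^+U_q(\g)$ and $U_q(\g)U_q(\g_+)^+$ are precisely the two summands along which $\wp$ projects. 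Second, in the list $C_1,\dots,C_m$ every block lying to the right of a $U^+$-block has a strictly larger root label, which is immediate from the indexing.

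In the induction, if $C_{j_1}$ is of type $U^-$ then $C_{j_1}\cdots C_{j_l}\in U^-_{\mu\leqslant}U_q(\g)\subseteq\ker\wp$, and if $C_{j_l}$ is of type $U^+$ then $C_{j_1}\cdots C_{j_l}\in U_q(\g)U^+_{\leqslant\mu}\subseteq\ker\wp$; this settles $l=1$ and reduces the inductive step to the case where $C_{j_1}=U^+_{\leqslant\al}$ is of type $U^+$ and $C_{j_l}$ is of type $U^-$, so $l\geqslant 2$. Let $\bt$ be the root label of $C_{j_2}$; by the second observation $\al<\bt$. If $C_{j_2}=U^+_{\leqslant\bt}$, then $U^+_{\leqslant\al}\subseteq U^+_{\leqslant\bt}$ and $U^+_{\leqslant\bt}$ is a subalgebra by Proposition~\ref{PBW com_rel}, hence $C_{j_1}C_{j_2}\subseteq U^+_{\leqslant\bt}=C_{j_2}$ and $C_{j_1}\cdots C_{j_l}\subseteq C_{j_2}C_{j_3}\cdots C_{j_l}$, a product of $l-1$ blocks, to which the induction hypothesis applies. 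If $C_{j_2}=U^-_{\bt\leqslant}$, then $U^-_{\bt\leqslant}\bullet U^+_{\leqslant\al}$ is a subalgebra by Proposition~\ref{PBW com_rel} (its hypothesis $\al<\bt$ being met), so
\[
C_{j_1}C_{j_2}=U^+_{\leqslant\al}\,U^-_{\bt\leqslant}\ \subseteq\ U^-_{\bt\leqslant}+U^+_{\leqslant\al}+U^-_{\bt\leqslant}U^+_{\leqslant\al}.
\]
Multiplying on the right by $C_{j_3}\cdots C_{j_l}$, the two summands that begin with $U^-_{\bt\leqslant}$ lie in $U^-_{\bt\leqslant}U_q(\g)\subseteq\ker\wp$, while the remaining summand equals $U^+_{\leqslant\al}\,C_{j_3}\cdots C_{j_l}=C_{j_1}C_{j_3}\cdots C_{j_l}$, a product of $l-1$ blocks annihilated by $\wp$ by the induction hypothesis. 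This closes the induction.

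I do not anticipate a genuine obstacle. The one point to isolate is that a $U^+$-block standing to the \emph{left} of a $U^-$-block always carries the strictly smaller root label, which is exactly the configuration Proposition~\ref{PBW com_rel} allows one to reorder, whereas the superficially worse arrangement with a $U^-$-block on the far left is in fact the trivial case, since it already places the product inside $U_q(\g_-)^+U_q(\g)$. What needs care is only the bookkeeping of the index--label correspondence, so that every reduction genuinely lowers $l$ and stays within the span of products of the blocks $C_j$.
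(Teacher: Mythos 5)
Your proof is correct. It rests on the same key lemma as the paper---Proposition~\ref{PBW com_rel}---and on the same essential move, namely commuting $U^+$-blocks past $U^-$-blocks with smaller root labels, but it is organized differently. The paper first establishes the space-level inclusion
$U_{\mu^i\leqslant}^-\bullet U_{\leqslant\mu^i}^+\bullet\cdots\bullet U_{\mu^k\leqslant}^-\bullet U_{\leqslant \mu^k}^+\subset U_{\mu^i\leqslant}^-\bullet U_{\leqslant \mu^k}^+$
by induction on the number of root pairs, pushing all $U^+$-blocks to the right in one sweep, and then applies $\wp$ once at the end. You instead use associativity of $\bullet$ to linearize the $\bullet$-product into a sum of ordered monomials $C_{j_1}\cdots C_{j_l}$, induct on the length $l$, and at each step either observe that the product already lies in $\mathfrak g_-U_q(\g)$ or $U_q(\g)\mathfrak g_+$, or use the commutation to shorten the product by one block. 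Your verification that a $U^+$-block always sits to the left of any block it needs to be commuted past (so that the hypothesis $\al<\bt$ of Proposition~\ref{PBW com_rel} is met) is exactly the point the paper's inductive step relies on as well. The paper's formulation is slightly more economical because the inclusion~(\ref{-+}) is a reusable intermediate statement, while yours is more granular and makes explicit, term by term, which pieces $\wp$ kills and why; both are complete.
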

\begin{proof}
The statement follows from the inclusion
\be
U_{i\leqslant}^-\bullet U_{\leqslant i}^+\bullet \cdots \bullet U_{k\leqslant}^-\bullet U_{\leqslant k}^+\subset
U_{i\leqslant}^-\bullet U_{\leqslant k}^+,
\label{-+}
\ee
where we write $U_{i\leqslant}^-=U_{\mu^i\leqslant}^-$ and $U_{\leqslant i}^+=U_{\leqslant \mu^i}^+$.
It is clearly true if $k=i$. Suppose it is proved for $k\geqslant i$.
Then
$$
U_{i\leqslant}^-\bullet U_{\leqslant i}^+\bullet \cdots \bullet U_{{k+1} \leqslant}^-\bullet U_{\leqslant {k+1}}^+\subset
U_{i\leqslant}^-\bullet U_{\leqslant k}^+\bullet U_{{k+1} \leqslant}^-\bullet U_{\leqslant {k+1}}^+\subset
U_{i\leqslant}^-\bullet U_{{k+1} \leqslant}^-\bullet U_{\leqslant  k}^+\bullet U_{\leqslant {k+1}}^+
$$
The right inclusion is due to Proposition  (\ref{PBW com_rel}). The result is contained in $U_{i \leqslant}^-\bullet U_{\leqslant {k+1}}^+$,
again by  Proposition  (\ref{PBW com_rel}).
Induction on $k$ completes the proof.
\end{proof}
\section{Extremal twist and extremal projector}
\label{Sec_con_form_proj}
We start with the case of $\g=\s\l(2)$ and normalize the inner product so that  $(\al,\al)=2$ for its only positive root $\al$.
Set $e=e_\al$, $f=f_\al$, and $q^{\pm h}=q^{\pm h_\al}$ to be the standard generators of $U_q(\g)$.
Extend $U_q(\g)$ to $\hat U_q(\g)$ by including infinite sums of elements from $\C[f]\C[e]$ of same weights
with coefficients in the field of fractions $\C(q^{\pm h})$.
Similar extension works for general semi-simple $\g$ making $\hat U_q(\g)$
an associative algebra, see e.g. \cite{KT}.

Define an element $p(t)\in \hat U_q\bigl(\s\l(2)\bigl)$ depending on a complex parameter $t$ by
\be
\label{translationed_proj}
p(t)=\sum_{k=0}^\infty  f^k e^k \frac{(-1)^{k}q^{k(t-1)}}{[k]_q!\prod_{i=1}^{k}[h+t+i]_q}
\in \hat U_q(\g).
\ee
It is stable under the involution $\omega$.

For every module $V$ with locally nilpotent action of $e$, the function $t\mapsto p(t)$ is a rational trigonometric
endomorphism of every weight space. On a module of highest weight
$\la$, 
it acts by
\be
p(t)v=c\prod_{k=1}^{l}\frac{[t-k]_q}{[t+\xi(h)+k]_q}v,
\label{proj_eigen}
\ee
where $v$ is a vector of weight $\xi=\la-l\al$ and $c=q^{-l \xi(h)+2l^2+l(l-1)}\not =0$.

For general $\g$ and $\mu\in \Rm^+$ let $p_\mu(t)$ denote the image of $p(t)$
in $\hat U_q(\g)$ under the embedding $\iota_\mu\colon \hat U_q(\s\l(2))\to \hat U_q(\g)$.
Put  $\la_i=2\frac{(\la,\mu^i)}{(\mu^i,\mu^i)}\in \C$ for $\la\in \h^*$ and $\mu^i\in \Rm^+$. Define
\be
p_\g(\la)=p_{\mu^1}(\rho_1+\la_1)\cdots p_{\mu^N}(\rho_N+\la_N),
\label{factorization}
\ee
assuming $\{\mu^i\}_{i=1}^N$  normally ordered.
It is independent of a normal ordering and turns to the extremal projector $p_\g$  at $\la=0$,  \cite{AST,KT}, which is the only element
of  $\hat U_q(\g)$ satisfying
$$
p_\g^2=p_\g, \quad e_\al p_\g =0 =p_\g f_\al , \quad \forall \al \in \Pi.
$$
Uniqueness implies that $p_\g$ is $\omega$-invariant.

Let $V$ and $W$ be vector spaces.
Suppose that $\C^k\ni \la\mapsto F(\la)\in \Hom(W,V)$ is a rational trigonometric function.
We say that $F(\la_0)$ admits a regularization if there is $\eta\in \C^k$ such that
the function $\C\ni t\mapsto F(\la_0+t\eta)$, is regular at $t=0$.
If its value is independent of $\eta$, then we say that  $F(\la_0)$ is well defined.

Furthermore, we say that $p_\g$ admits a regularization on a subspace $W$ of a $U_q(\g)$-module $V$
if so does $p_\g(\la)$ at $\la=0$ and the image of the regularized map $W\to V$ is in $U_q(\g_+)$-invariants.

Suppose that $V$ and $Z$ are irreducible modules of highest weights $\nu$ and, respectively, $\zt$. Let $1_V\in V$, $1_Z\in Z$
be their highest vectors.
\begin{propn}
   Suppose that $W\subset  V$ is a vector subspace such that $p_\g\colon W\tp 1_Z\to  (V\tp Z)^+$ admits a regularization.
   Then $p_\g(\zt)\colon W\to V^+_Z$ admits a regularization. Furthermore,
\label{twist-cocycle}
\be
\dt\circ p_\g(\zt)(w)=p_\g(w\tp 1_Z), \quad w\in W.
\label{tw-coc-form}
\ee
\end{propn}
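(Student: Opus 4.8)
The plan is to unwind the definitions of $p_\g(\zt)$, of the extremal twist $\Theta_Z$ and of the isomorphism $\dt\colon V^+_Z\to (V\tp Z)^+$, and then to match the two sides of \eqref{tw-coc-form} by evaluating a universal identity in (a completion of) $U_q(\g)^{\tp 2}$ on $w\tp 1_Z$. The starting point is the factorization \eqref{factorization}, $p_\g(\la)=p_{\mu^1}(\rho_1+\la_1)\cdots p_{\mu^N}(\rho_N+\la_N)$, together with the eigenvalue formula \eqref{proj_eigen}: the shifts $\rho_i+\zt_i$ are exactly the ones for which the $\mu^i$-factor, acting over the $\s\l(2)$-triple $U_q(\g^{\mu^i})$, computes the restriction of the contravariant form of the universal Verma module of highest weight $\zt$ over that $\s\l(2)$. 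Thus $p_\g(\zt)$ is, up to the regularization issue, the image in $\hat U_q(\g)$ of the inverse contravariant form $\Sc$ of $Z$ pulled back along the PBW-type section — i.e. it is a concrete avatar of $\Theta_Z=\gm^{-1}(\Fc_2)\Fc_1$. I would first make this identification precise: using Proposition~\ref{M-F} (the coproduct of each $e_\mu$, $f_\mu$ is triangular modulo the $U^\pm$-subalgebras) and Proposition~\ref{absorbtion} (the projection $\wp$ kills all the cross terms $U^-_{\mu^i\leqslant}\bullet U^+_{\leqslant\mu^i}$), one shows that when $p_\g(\zt)$ is applied to $1_Z$ inside $Z$ and then paired back via $\langle 1_Z,\cdot\rangle$, only the "diagonal" contributions survive, and these reproduce precisely $\zt\bigl(\wp(\omega(f)g)\bigr)$-type matrix entries. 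This is the computational heart and the step I expect to be the main obstacle: controlling the infinite sums in $\hat U_q(\g)$ and checking that the normal-ordering combinatorics of the Cartan–Weyl basis line up with the product structure of $\Sc$ over the $\s\l(2)$-triples (the fact, cited before \eqref{factorization}, that $p_\g(\la)$ is independent of the normal ordering is what makes this consistent).

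Granting that identification, the proof of \eqref{tw-coc-form} is then formal. By hypothesis $p_\g\colon W\tp 1_Z\to (V\tp Z)^+$ admits a regularization, i.e. there is a direction $\eta$ along which $t\mapsto p_\g(t\eta)(w\tp 1_Z)$ is regular at $t=0$ with value a singular vector. Apply the map $\bar\dt\colon v\tp z\mapsto v\langle 1_Z,z\rangle$ of the excerpt, which is continuous in $t$ since it is linear and weight-preserving and the relevant weight spaces are finite-dimensional. Using $\Delta$ on the factored form \eqref{factorization}, together with Propositions~\ref{M-F} and~\ref{absorbtion}, the image $\bar\dt\bigl(p_\g(t\eta)(w\tp 1_Z)\bigr)$ equals $p_\g(\zt+t\eta')(w)\in V$ for the appropriately shifted direction $\eta'$ (the shift coming from the weight $\zt$ of $1_Z$ interacting with the Cartan parts $q^{h_{\mu^i}}$ in the coproducts). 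Hence the right-hand side limit exists, which gives the asserted regularizability of $p_\g(\zt)$ on $W$, and moreover $\bar\dt\circ p_\g = p_\g(\zt)$ as maps $W\to V^+_Z$. Composing with the inverse isomorphism $\dt$ of $\bar\dt$ yields $\dt\circ p_\g(\zt)(w)=p_\g(w\tp 1_Z)$.

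Two points need care and I would address them explicitly. First, one must check that the value $p_\g(\zt)(w)$ actually lands in $V^+_Z=\Hom_{U_q(\g_+)}(Z^*,V)$, equivalently is annihilated by the left ideal $I^+_Z$; this follows because $e_\al p_\g = 0$ for all simple $\al$ in the unregularized projector, and this annihilation property is inherited by the regularization along $\eta'$ — or, more invariantly, because $\bar\dt$ intertwines the $U_q(\g_+)$-action appropriately and the left-hand side $p_\g(w\tp 1_Z)$ is singular by assumption, so its $\bar\dt$-image lies in $V^+_Z$ by the isomorphism $(V\tp Z)^+\cong V^+_Z$ recalled before Proposition~\ref{V-Z-extr}. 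Second, well-definedness (independence of $\eta$): since $\bar\dt$ is a fixed linear isomorphism onto its image and the left-hand regularized limit is, by hypothesis, independent of the chosen direction, so is the right-hand side; thus no separate argument for uniqueness of the regularization of $p_\g(\zt)$ is required beyond transporting it through $\bar\dt$. The upshot is that \eqref{tw-coc-form} is essentially the statement that $\bar\dt$ conjugates the ambient shifted extremal projector on $V\tp Z$ into $p_\g(\zt)$ on $V$, which is what the combinatorial identification in the first paragraph establishes.
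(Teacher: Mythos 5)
The computational core of your proposal (the second and third paragraphs) is essentially the paper's proof: apply $\bar\dt$ to $p_\g(t\eta)(w\tp 1_Z)$, invoke Propositions~\ref{M-F} and~\ref{absorbtion} to see that only the ``diagonal'' terms in $\Delta\bigl(p_\g(t\eta)\bigr)$ survive pairing the second leg against $1_Z$, identify the result as $p_\g(\zt+t\eta)(w)$, and transport the regularization through the isomorphism $\bar\dt$ before composing with $\dt$. That is exactly the argument in the paper.

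Two cautions on the surrounding framing. Your opening paragraph is a detour that your actual argument never uses, and it is conceptually off: you describe $p_\g(\zt)$ as ``a concrete avatar of $\Theta_Z$,'' but Theorem~\ref{Shap-proj} --- which is proved \emph{from} the present proposition --- establishes that the bilinear forms built from $\theta_{V,Z}$ and from $\bar\theta_{V,Z}$ (the latter being the $p_\g(\zt)$-side) are \emph{inverse} to each other, not equal. Treating that identification as the ``computational heart'' to be granted would make the argument circular, and in any case the proof you then give is self-contained and does not rely on it. Second, the phrase ``for the appropriately shifted direction $\eta'$'' is a small but misleading imprecision: the direction of approach is unchanged, $\eta'=\eta$. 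What shifts is the \emph{base point} from $0$ to $\zt$, because the weight $\zt$ of $1_Z$ enters linearly through the $q^{h_{\mu^i}}$ factors so that each argument $\rho_i+t\eta_i$ becomes $\rho_i+(\zt+t\eta)_i$. Stating this correctly matters: the conclusion is exactly that $p_\g(\zt+t\eta)$ is regular at $t=0$ for the \emph{same} $\eta$ hypothesized on the tensor-product side.
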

\begin{proof}
By Proposition \ref{M-F}, for all $\al\in \Rm^+$ and  all $n\in \Z_+$, the coproducts satisfy
$$
\Delta (f_\al^n)= f_\al^n\tp 1\mod U_q(\g)\tp U^-_{\al\leqslant},
\quad
\Delta (e_\al^n)= e_\al^n\tp q^{n h_\al}\mod U_q(\g)\tp U^+_{\leqslant \al}.
$$
With  $\eta\in \h^*$, $t\in \C$, and $w\in W$,  evaluation of
$\bar \dt\bigr(p_\g(w\tp 1_Z)\bigr)=p_\g^{(1)}(t\eta)w \tp \bigl\langle 1_Z,p_\g^{(2)}(t\eta) 1_Z \bigr\rangle$ reduces to the
  replacement
$$
  \Delta(q^{h_\al})\to q^{h_\al}\tp q^{h_\al},\quad \Delta(f_\al) \to f_\al\tp 1, \quad \Delta(e_\al)\to e_\al  \tp q^{h_\al}
$$
in  $\Delta\bigl(p_\g(t\eta)\bigr)$ for each $\al \in \Rm^+$, because the remainder vanishes  in view of  Proposition \ref{absorbtion}. This calculation returns $p_\g(\zt+t\eta)(w)$,
which proves the assertion.
\end{proof}

From now  to the end of the section we assume that the map
\be
\pi\colon v\mapsto p_\g(v\tp 1_Z)\in (V\tp Z)^+
\label{inv_twist}
\ee
admits a regularization on ${}^+\!V_Z$.
Then we have a map
\be
\bar \theta_{V,Z}\colon {}^+\!V_Z\to V^+_Z, \quad  \bar \theta_{V,Z}=\bar \dt\circ \pi.
\label{inv_twist1}
\ee
  It is an immediate corollary of Proposition \ref{twist-cocycle} that
  $\bar \theta_{V,Z}$ defines a symmetric bilinear form $\langle \bar \theta_{V,Z}(\>.\>),\>.\>\rangle$ on ${}^+\!V_Z$,
  which is the pull-back of the canonical form on $(V\tp Z)^+$ under (\ref{inv_twist}).
We will prove that this form is essentially  the inverse to the form determined by $\theta_{V,Z}$.

\begin{thm}
\label{Shap-proj}
The bilinear  forms $\bigl\langle \theta_{V,Z}(\>.\>),\>.\> \bigr\rangle$ and $\bigl\langle\bar \theta_{V,Z}(\>.\>),\>.\> \bigr\rangle$
are non-degenerate simultaneously. In that case, they are inverse to each other.
\end{thm}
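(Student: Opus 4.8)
The plan is to realize $\pi$ and $\dt$ as mutually adjoint maps into $(V\tp Z)^+$ and to read off from this that $\theta_{V,Z}$ and $\bar\theta_{V,Z}$ are mutually inverse, which yields the assertion. Two preliminary points. First, $\pi=\dt\circ\bar\theta_{V,Z}$ on ${}^+\!V_Z$: indeed $\bar\theta_{V,Z}=\bar\dt\circ\pi$ by (\ref{inv_twist1}), the image of $\pi$ lies in $(V\tp Z)^+$ by the standing hypothesis, and $\dt\circ\bar\dt$ is the identity on $(V\tp Z)^+$; in particular $\bar\theta_{V,Z}$ is the regularized operator $p_\g(\zt)$ of Proposition \ref{twist-cocycle} restricted to ${}^+\!V_Z$. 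Second, the contravariant form on $V$ restricts to a \emph{non-degenerate} pairing ${}^+\!V_Z\times V^+_Z\to\C$: the form is non-degenerate on every weight space of $V$, $V^\perp_Z$ is the annihilator of $V^+_Z$, and ${}^+\!V_Z$ is a complement of $V^\perp_Z$ in $V$; these facts give, weight by weight, $\dim {}^+\!V_Z[\mu]=\dim V^+_Z[\mu]$, so the induced pairing is perfect.

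The heart of the matter is the identity $\langle\pi(w),\xi\rangle=\langle w,\bar\dt(\xi)\rangle$ for all $w\in{}^+\!V_Z$ and all singular $\xi\in(V\tp Z)^+$, where on the left stands the canonical form on $V\tp Z$ and on the right the contravariant form on $V$. To prove it, write $\pi(w)$ as the regularization $\lim_{t\to0}p_\g(t\eta)(w\tp 1_Z)$ for a suitable $\eta$. The canonical form on $V\tp Z$ is contravariant for $\omega$ since $\omega$ preserves the comultiplication, and each $p_\g(\la)$ is $\omega$-invariant (the $\s\l(2)$ element $p(t)$ is $\omega$-invariant and the factorization (\ref{factorization}) does not depend on the normal ordering); hence $\langle p_\g(t\eta)(w\tp 1_Z),\xi\rangle=\langle w\tp 1_Z,\,p_\g(t\eta)\,\xi\rangle$. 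Now every Cartan--Weyl vector $e_{\mu^i}$ lies in the augmentation ideal of $U_q(\g_+)$, so $e_{\mu^i}\xi=0$; consequently each factor $p_{\mu^i}(\rho_i+t\eta_i)$ acts on $\xi$ through its constant term only, whose coefficient in (\ref{translationed_proj}) is $1$, whence $p_\g(t\eta)\xi=\xi$ for every $t$. Thus the left side collapses to the constant $\langle w\tp 1_Z,\xi\rangle$, and expanding $\xi=\sum_j v_j\tp z_j$ this equals $\sum_j\langle w,v_j\rangle\langle 1_Z,z_j\rangle=\langle w,\bar\dt(\xi)\rangle$.

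To finish, take $\xi=\dt(u)$ with $u\in V^+_Z$; since $\bar\dt\circ\dt=\id$ the identity reads $\langle\pi(w),\dt(u)\rangle=\langle w,u\rangle$. On the other hand $\pi(w)=\dt(\bar\theta_{V,Z}(w))$ by the first preliminary point, and by Proposition \ref{V-Z-extr} the canonical form pulled back along $\dt$ is $\langle\theta_{V,Z}(\cdot),\cdot\rangle$, so $\langle\pi(w),\dt(u)\rangle=\langle\theta_{V,Z}(\bar\theta_{V,Z}(w)),u\rangle$. Comparing the two expressions and invoking perfectness of the pairing ${}^+\!V_Z\times V^+_Z$ gives $\theta_{V,Z}\circ\bar\theta_{V,Z}=\id_{{}^+\!V_Z}$. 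Since $V^+_Z$ and ${}^+\!V_Z$ have the same finite dimension in each weight, this single relation forces $\theta_{V,Z}$ and $\bar\theta_{V,Z}$ to be mutually inverse isomorphisms as soon as either is invertible. The form $\langle\theta_{V,Z}(\cdot),\cdot\rangle$ on $V^+_Z$ and the form $\langle\bar\theta_{V,Z}(\cdot),\cdot\rangle$ on ${}^+\!V_Z$ are non-degenerate precisely when, respectively, $\theta_{V,Z}$ and $\bar\theta_{V,Z}$ are invertible under that perfect pairing; hence the two forms are non-degenerate simultaneously, and in that case they correspond to mutually inverse maps, i.e. are inverse to each other.

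The step I expect to be the main obstacle is the displayed identity, and within it the careful handling of the regularization: one must check that transporting $p_\g$ across the canonical form and evaluating it on singular vectors are legitimate already at the level of the rational trigonometric family $t\mapsto p_\g(t\eta)$ and that nothing is lost on passing to the limit $t\to 0$, and one needs the $\omega$-invariance of the \emph{shifted} projector $p_\g(\la)$ for all $\la$, not merely for $\la=0$. The remaining ingredients — the relation $\pi=\dt\circ\bar\theta_{V,Z}$, perfectness of the pairing, and the dimension count — are routine.
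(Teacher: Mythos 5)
Your proof proceeds by a different, single adjointness computation, whereas the paper argues two implications separately. The paper's proof assumes in turn that $\theta_{V,Z}$ is invertible (computing $\bigl\langle(\dt\circ\bar\theta_{V,Z}\circ\theta_{V,Z})(v),\dt(w)\bigr\rangle$ two ways to obtain $\bar\theta_{V,Z}\circ\theta_{V,Z}=\id$ on $V^+_Z$) and that $\bar\theta_{V,Z}$ is invertible (computing $\bigl\langle\dt\circ\bar\theta_{V,Z}(v),\dt\circ\bar\theta_{V,Z}(w)\bigr\rangle$ two ways to obtain $\theta_{V,Z}\circ\bar\theta_{V,Z}=\id$ on ${}^+\!V_Z$). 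Both of the paper's computations pair $\pi(w)$ only with vectors of the form $\dt(w')$ coming from the range of the map under study, and that is precisely why the extra invertibility hypothesis is needed at the end to cover enough test vectors. You instead pair $\pi(w)$ against an \emph{arbitrary} singular vector $\xi$, not just $\pi(w')$, and deduce the one identity $\bigl\langle\pi(w),\xi\bigr\rangle=\bigl\langle w,\bar\dt(\xi)\bigr\rangle$ from which $\theta_{V,Z}\circ\bar\theta_{V,Z}=\id_{{}^+\!V_Z}$ follows with no invertibility assumption. That is an attractively clean adjointness statement, and the supporting steps (contravariance of the canonical form, $\omega$-invariance of the shifted projector, the fact that each root factor $p_{\mu^i}$ fixes singular vectors, perfectness of the pairing ${}^+\!V_Z\times V^+_Z$) are all in order.

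The thing you should be uneasy about is exactly what your argument buys: $\theta_{V,Z}\circ\bar\theta_{V,Z}=\id$ together with $\dim{}^+\!V_Z[\mu]=\dim V^+_Z[\mu]$ forces \emph{both} maps to be bijective in every weight, hence both forms to be non-degenerate, unconditionally under the standing regularization hypothesis. That is strictly stronger than the theorem as stated (``non-degenerate simultaneously''), and your closing sentence ``as soon as either is invertible'' understates your own conclusion. Before trusting this strengthening, you should reconcile it with the paper's remark immediately after the proof, which explicitly allows the regularization on $V^+_Z\tp 1_Z$ to depend on the direction $\eta$ when the form is degenerate; if your argument is right, that possibility never arises under the hypothesis. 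The most likely source of trouble, if there is one, is exactly the transport of $p_\g(t\eta)$ across the canonical form: one must know that the \emph{shifted} element $p_\g(\la)$ is $\omega$-invariant for all $\la$ (the paper proves $\omega$-invariance only at $\la=0$ by uniqueness of the extremal projector), and that contravariance survives the completion $\hat U_q(\g)$ on the vectors in play. You gesture at both points but do not give a justification for the $\omega$-invariance of the shifted factors $p_{\mu^i}(s)$; for non-simple $\mu^i$ this is not immediate, since $\si$ does not commute with the braid operators $T_\al$ and only $\tilde\omega$ (which also inverts $q$) does. Either establish $\omega(p_\g(\la))=p_\g(\la)$ directly, or restrict your pairing argument to $\xi$ in the range of $\pi$ as the paper does, at the price of reinstating the case split.
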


\begin{proof}
\vspace{10pt}
\noindent
Suppose that $\theta_{V,Z}$ is inverible and compute $\bigl\langle (\dt \circ \bar \theta_{V,Z}\circ \theta_{V,Z})(v), \dt(w)\bigr\rangle $
for a pair of vectors $v, w\in V^+_Z$
in two different ways as follows (we  put $u=\dt(w)$ below for short).\\
i) Applying the definition  (\ref{inv_twist1}) we find the matrix element  equal to
$\left\langle  p_\g\bigl(\theta_{V,Z}(v)\tp 1_Z\bigr),u\right\rangle $.
Presenting  $p_\g=1+\sum_{i} \phi_i\psi_i$, where $\phi_i\in U_q(\g_-)$ and $\psi_i\in U_q(\b_+)$
carry non-zero weight,
we continue with
$$
 \bigl\langle \theta_{V,Z}( v)\tp 1_\zt, u\bigr\rangle +\sum_{i}\bigl\langle \phi_i\psi_i(\theta_{V,Z}(v)\tp 1_\zt),u\bigr\rangle=
  \bigl\langle \theta_{V,Z}(v)\tp 1_\zt,u\bigr\rangle=\bigl\langle \theta_{V,Z}(v),w\bigr\rangle=\bigl\langle v,\theta_{V,Z}(w)\bigr\rangle.
$$
The sum on the left vanishes because
 $\bigl\langle \phi_i\psi_i(\ldots), u\bigr\rangle=\bigl\langle\psi_i(\ldots), (\eps\circ \omega )(\phi_i) u\bigr\rangle =0$.\\
ii) By Proposition \ref{V-Z-extr} the matrix element in question is equal to
 $\langle \bar \theta_{V,Z}\circ \theta_{V,Z}(v),\theta_{V,Z}(w)\rangle $.
Since $\theta_{V,Z}$ is invertible, the image of $\theta_{V,Z}$ is ${}^+\!V_Z$,
and $\bar \theta_{V,Z}\circ \theta_{V,Z}=\id$ on $V^+_Z$.

\vspace{10pt}

Now suppose that $\bar \theta_{V,Z}$  is invertible
and evaluate  $\bigl\langle\dt\circ \bar \theta_{V,Z}(v), \dt\circ\bar \theta_{V,Z}(w)\bigr\rangle$
for $v,w\in {}^+\!V_Z$  in two different ways as follows.
\\
i)
On the one hand, it is equal to
$$
\bigl\langle p_\g(v\tp 1_Z), p_\g(w\tp 1_Z)\bigr \rangle=\bigl\langle v\tp 1_Z, \omega(p_\g)\circ p_\g(w\tp 1_Z)\bigr \rangle
=\Bigl\langle v,\bar \dt\bigl( p_\g(w\tp 1_Z)\bigr) \Bigr\rangle = \bigl\langle v, \bar \theta_{V,Z}(w) \bigr\rangle.
$$
ii) On the other hand, it is equal to  $\bigl\langle \theta_{V,Z}\circ \bar \theta_{V,Z}(v),\bar \theta_{V,Z}(w) \bigr\rangle$
 by Proposition \ref{V-Z-extr}.
Since the image of $\bar \theta_{V,Z}$ is $V^+_Z$, one arrives at   $\theta_{V,Z}\circ \bar \theta_{V,Z}=\id$ on ${}^+\!V_Z$.
\end{proof}
It follows that the regularization $\lim_{t\to 0}p_\g(t\eta)|_{V^+_Z\tp 1_Z}$ may depend on $\eta$ only if the contravariant form is degenerate on $(V\tp Z)^+$.
\subsection{On regularization of extremal projector}
Regularization of the extremal projector is crucial for application of Theorem  \ref{Shap-proj} to calculation of the extremal twist.
In this section we point out some facts of practical use.

It is natural to employ decomposition of $p_\g(\la)$ to a product of the root factors  (\ref{factorization}).
\begin{propn}
\label{reg_proj}
Let $V$ be a $U_q(\g)$-module and put $W=V[\mu]$  for some weight $\mu\in \La(V)$.
Fix a normal order on $\Rm^+$ and suppose that  $p_\al(\rho_\al)$ are well defined
on $W$ for all $\al \in \Rm^+$. Then the operator $p_\g(0)=\prod_{\al \in \Rm^+}^< p_\al(\rho_\al)$ is well defined on $W$ and independent of the normal order.
\end{propn}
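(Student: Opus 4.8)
The plan is to deduce the statement from two facts already in place: that each root factor $p_{\mu^i}(t)$ acts on a fixed weight space as an endomorphism-valued rational trigonometric function of $t$, and that the product $p_\g(\la)$, regarded as such a function of $\la$, does not depend on the normal order (recalled above from \cite{AST,KT}).

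The essential preliminary observation I would make is that every $p_{\mu^i}(t)$ preserves the weight space $W=V[\mu]$: the root vectors $e_{\mu^i}$, $f_{\mu^i}$ carry weights $\pm\mu^i$, so the monomials $f_{\mu^i}^k e_{\mu^i}^k$ occurring in $p_{\mu^i}(t)$ are weight-preserving, while $q^{h_{\mu^i}}$ acts on $W$ by the scalar $q^{(\mu,\mu^i)}$. Hence each $p_{\mu^i}(t)|_W$ is an $\End(W)$-valued rational trigonometric function of the single variable $t$, and $p_\g(\la)|_W=\prod_{i=1}^{N}p_{\mu^i}(\rho_i+\la_i)|_W$ is such a function of the variable $\la\in\h^*$. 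Next I would unwind the definitions: for a function of a single complex parameter, admitting a regularization at a point coincides with being regular there and "well defined" adds nothing, so the hypothesis that $p_\al(\rho_\al)$ is well defined on $W$ means precisely that $t\mapsto p_\al(t)|_W$ has no pole at $t=\rho_\al$. Since $\la\mapsto\la_i=2(\la,\mu^i)/(\mu^i,\mu^i)$ is linear and vanishes at $\la=0$, each $\la\mapsto p_{\mu^i}(\rho_i+\la_i)|_W$ is then regular at $\la=0$.

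Since a finite product of $\End(W)$-valued functions that are individually regular at $\la=0$ is again regular at $\la=0$ with value the product of the values, it follows that $p_\g(\la)|_W=\prod_{i=1}^{N}p_{\mu^i}(\rho_i+\la_i)|_W$ is regular at $\la=0$; in particular $\lim_{t\to0}p_\g(t\eta)|_W$ exists for every $\eta\in\h^*$ and equals $\prod_{i=1}^{N}p_{\mu^i}(\rho_i)|_W$ independently of $\eta$, which is exactly the assertion that $p_\g(0)$ is well defined on $W$. For independence of the normal order I would use that $p_\g(\la)|_W$ is the same rational trigonometric function for every normal order; since the hypothesis is imposed for all $\al\in\Rm^+$, the argument just given applies to each normal order and yields a regular value at $\la=0$, and this value is the value at $\la=0$ of the order-independent function $p_\g(\la)|_W$, hence is one and the same.

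I expect the only point requiring care to be the bookkeeping of the first step — keeping all root factors honestly on the single weight space $W$, so that their composition really is a product of $\End(W)$-valued functions and the triviality "a product of functions regular at a point is regular there" applies — together with the observation that "well defined" reduces to ordinary regularity for the single‑variable root factors. Everything else is a formal consequence of the definitions and of the already-known order independence of $p_\g(\la)$.
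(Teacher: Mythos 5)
Your proof is correct and takes essentially the same approach as the paper's two-line argument: each root factor depends on $\la$ through a linear (hence regular) map and is regular at the relevant point, so the product is a regular $\End(W)$-valued function at $\la=0$, and order-independence of the rational function $p_\g(\la)|_W$ pins down the value. One minor imprecision worth flagging: the clause where you re-apply the hypothesis to other normal orders is both unjustified (the non-simple root vectors, and hence the factors $p_\al$, change with the normal order, so the hypothesis only covers the fixed order) and unnecessary, since order-independence of $p_\g(\la)$ as a function already determines the limit once regularity is established via the fixed order.
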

\begin{proof}
Each factor in $p_\g(\la)$ corresponding to a root $\al \in \Rm^+$ depends on $\la$ through a regular function $\la\mapsto (\la,\al^\vee)$
and  is well defined once admits a regularization.
This implies the assertion.
\end{proof}
Note that one has to consider the entire weight space for $W$ because it is {\em a priori} invariant under all root factors
in $p_\g(\la)$.
Clearly the statement holds true for $W$ a sum of weight spaces.

\begin{propn}
  For any $r\in \C$ the operator $p_\al(r)$, $\al\in \Rm^+$, is well defined on a subspace of weight $\xi$ satisfying
  $(\la,\al^\vee)+r\in \Z_+$,
   in any $U_q(\g_+^\al)$-locally finite $U_q(\g^\al)$-module.
\label{easy_case}
\end{propn}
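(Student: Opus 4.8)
The statement concerns a single root $\al \in \Rm^+$, so via the embedding $\iota_\al\colon \hat U_q(\s\l(2))\to \hat U_q(\g)$ the whole claim reduces to the rank-one case: one must show that, in an $\s\l(2)$-module on which $e$ acts locally nilpotently, the element $p(t)$ of \eqref{translationed_proj} has a well-defined value at $t = r$ on any weight space $V[\xi]$ for which $(\la,\al^\vee) + r = \xi(h) + r \in \Z_+$. The plan is to examine the explicit series \eqref{translationed_proj} term by term on such a weight space and show that the only potential poles (coming from the factors $[h + t + i]_q$ in the denominators) are in fact cancelled, so that no regularization — i.e. no choice of direction $\eta$ — is needed at all; the value is simply the naive substitution $t = r$.

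First I would fix a vector $v \in V[\xi]$ with $\xi(h) = m$, where $m + r = n \in \Z_+$. Acting with $p(t)$ on $v$, the $k$-th term is $f^k e^k v$ times the scalar
\[
\frac{(-1)^k q^{k(t-1)}}{[k]_q!\,\prod_{i=1}^{k}[h + t + i]_q},
\]
but after $e^k$ is applied the weight has been raised: $e^k v$ lies in $V[\xi + k\al]$, on which $h$ acts by $m + 2k$, while then one applies $f^k$ and the $q^{h+t+i}$ factors are evaluated on the appropriate intermediate weights. The key observation is that when the denominator factor $[h + t + i]_q$ is evaluated in the course of this computation it becomes $[m' + r + i]_q$ for various integers $m'$, and by the local nilpotence of $e$, only finitely many terms $k$ contribute; moreover a denominator $[\text{integer}]_q$ vanishes only when that integer is $0$, which would force $m' + r + i = 0$, i.e. a strictly negative value of $m + r + (\text{something})$. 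I would track precisely which intermediate weights occur — using $[h+t+i]_q$ evaluated on the weight of $e^{k-j}v$-type vectors as in the standard $\s\l(2)$ computation underlying \eqref{proj_eigen} — and check that the hypothesis $m + r \in \Z_+$ guarantees all these evaluated integers are $\geq 1$, hence no factor vanishes, hence each term of the (finite) sum is regular at $t = r$.

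Concretely, the cleanest route is to invoke \eqref{proj_eigen} itself: on a highest weight $\s\l(2)$-module of highest weight $\la$, $p(t)$ acts on the weight-$\xi$ space ($\xi = \la - l\al$) by the scalar $c\prod_{k=1}^{l}\frac{[t-k]_q}{[t+\xi(h)+k]_q}$. Since $V$ is $U_q(\g_+^\al)$-locally finite, as a $U_q(\g^\al)$-module it is a (possibly infinite) direct sum of finite-dimensional highest weight modules; on each such summand the formula applies with $\la$ ranging over dominant integral weights, so $\xi(h) = \la(h) - 2l$ and the denominator is $\prod_{k=1}^{l}[t + \la(h) - 2l + k]_q$. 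The poles in $t$ are at $t = 2l - \la(h) - k$ for $1 \le k \le l$; I would verify that $t = r$ is such a pole only if $r + \xi(h) + k = 0$ for some $1 \le k \le l$, i.e. $r + \xi(h) = -k \le -1$, contradicting $r + \xi(h) = r + (\la,\al^\vee) \in \Z_+$. Hence on every irreducible $U_q(\g^\al)$-summand the naive substitution $t = r$ is legitimate, and therefore it is legitimate on all of $W$.

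The only real subtlety — and the step I expect to be the main obstacle — is the bookkeeping of \emph{which} weight space one is on versus which denominator factors get evaluated: one must be careful that $p_\al(r)$ is applied to the $W = V[\xi]$ of the original $\g$-module, but $\iota_\al$ identifies $\xi(h_\al) = (\xi,\al^\vee)$, and within a single $U_q(\g^\al)$-summand the relevant $\s\l(2)$-weight is exactly this number, so the hypothesis $(\la,\al^\vee) + r \in \Z_+$ is precisely what controls the denominator. Once this identification is pinned down, the argument is just the inspection of \eqref{proj_eigen} described above; I would then conclude that $p_\al(r)$ is not merely regularizable but genuinely well defined on $W$, since the regularized value coincides with the direct substitution and is manifestly independent of any auxiliary direction $\eta$.
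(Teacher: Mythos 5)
Your first sketch — evaluating the series \eqref{translationed_proj} term by term on a weight vector — is precisely the paper's proof, which is a one-liner: since the scalar factor $\bigl([k]_q!\prod_{i=1}^{k}[h+t+i]_q\bigr)^{-1}$ stands to the right of $f^ke^k$, it is applied directly to the weight vector $v$ before any root vectors act, so the denominator evaluates to $\prod_{i=1}^{k}[(\la,\al^\vee)+r+i]_{q_\al}$ on the nose (no ``various $m'$'' and no bookkeeping of intermediate weights is needed), and these $q$-integers are nonzero because $(\la,\al^\vee)+r+i\geq 1$ and $q$ is not a root of unity. Local nilpotence of $e_\al$ makes the sum finite, and the resulting function of $t$ is manifestly regular at $t=r$, so $p_\al(r)$ is well defined.

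However, the route you call ``cleanest'' contains a genuine gap. You assert that a $U_q(\g_+^\al)$-locally finite $U_q(\g^\al)$-module decomposes as a (possibly infinite) direct sum of finite-dimensional highest weight modules, and you then invoke \eqref{proj_eigen} on each summand. This decomposition claim is false under the stated hypothesis: $U_q(\g_+^\al)$-local finiteness only says that $e_\al$ acts locally nilpotently, which is already satisfied, for example, by every $U_q(\s\l(2))$-Verma module, including the irreducible infinite-dimensional ones with non-integral highest weight. Such a module is not a sum of finite-dimensional summands, and formula \eqref{proj_eigen} (which assumes a highest weight $\la$ with $\xi=\la-l\al$ lying inside that module) is then not available in the form you want. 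The whole point of the proposition is to cover exactly these non--locally-finite-over-$U_q(\g^\al)$ situations (parabolic Verma modules, etc.), so the reduction to finite-dimensional blocks discards the cases where the statement is actually needed. Stick with the direct examination of the series, which requires no structural assumption beyond local nilpotence of $e_\al$ and gives the result immediately.
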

\begin{proof}
  Denominators $\prod_{i=1}^{k}[(\la,\al^\vee)+r+i]_{q_\al}$ in (\ref{translationed_proj}) do not vanish on such weight spaces
  ($q$ is not a root of unity).
\end{proof}
Although  Proposition \ref{easy_case} is rather crude it proves to be useful.
We also need more delicate criteria, rather in a more special situation.
\begin{lemma}
 \label{reg_fin_dim}
  Let $V$ be a finite dimensional $U_q(\g^{\al})$-module, $\al \in \Rm^+$.
  For any $r\in \N$ the operator $p_\al(r)$ is well defined on $V$.
\end{lemma}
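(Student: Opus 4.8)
The plan is to reduce everything to the $\s\l(2)$-calculation already encapsulated in formulas (\ref{translationed_proj})–(\ref{proj_eigen}), using the fact that a finite dimensional $U_q(\g^\al)$-module is a direct sum of irreducible highest weight modules. First I would decompose $V=\bigoplus_j V_j$ into irreducible $U_q(\g^\al)$-submodules, so it suffices to treat a single irreducible $V_j$ of highest weight $\la$ (an integer, up to the usual $q_\al$-conventions, since $V_j$ is finite dimensional). Fix a weight vector $v\in V_j$ of weight $\xi=\la-l\al$, $0\le l\le \la$. By (\ref{translationed_proj}) the only way $p_\al(r)v$ can fail to be defined is through a pole of one of the denominators $\prod_{i=1}^{k}[h+r+i]_{q_\al}$ evaluated on $v$, i.e. through a vanishing of $[\,\xi(h_\al)+r+i\,]_{q_\al}=[\,\la-2l+r+i\,]_{q_\al}$ for some $1\le i\le k$. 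Since $q$ is not a root of unity, $[m]_{q_\al}=0$ iff $m=0$, so the obstruction is exactly the appearance of $\la-2l+r+i=0$ for some $i\ge 1$ with $f^k e^k v\neq 0$.

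The key step is then a finiteness/cancellation argument. Because $V_j$ is finite dimensional and $e$ acts locally nilpotently, $e^k v=0$ once $k>l$, so only finitely many terms contribute and the sum is honestly an operator on $V_j$; moreover a potentially dangerous denominator factor $[\la-2l+r+i]_{q_\al}$ with $\la-2l+r+i=0$ forces $i=2l-\la-r\le 2l$, hence this factor only occurs in terms with $k\ge i$. The strategy is to show that in every such term the offending linear factor in the denominator is cancelled: either $f^ke^kv$ already vanishes because $k$ exceeds the available room (using $r\in\N$, so $r\ge 1$, to push $i$ large), or the numerator of the eigenvalue formula (\ref{proj_eigen}) — applied on each weight space of the $\s\l(2)$-string through $v$ — supplies a compensating zero. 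Concretely I would evaluate $p_\al(r)$ on the $(l{+}1)$-dimensional string $U_q(\g^\al)v$ directly from (\ref{proj_eigen}) with $t=r$: the eigenvalue on the weight-$\xi$ vector is $c\prod_{k=1}^{l}\frac{[r-k]_{q_\al}}{[r+\la-l+k]_{q_\al}}$ (with $\xi(h_\al)=\la-2l$ rewritten appropriately), and since $r\in\N$ the numerator product $\prod_{k=1}^l[r-k]_{q_\al}$ vanishes whenever $r\le l$, precisely when the denominator could vanish; for $r>l$ both numerator and denominator are manifestly nonzero. Thus $p_\al(r)$ acts by a finite scalar on each weight space of each $V_j$, with no pole, and the regularization is independent of the direction $\eta$ — it is simply the limit of an already-regular function.

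To package this as "well defined" in the paper's sense (Definition before Proposition \ref{reg_proj}), I would write $p_\al(r)$ as the specialization at $t=0$ of $t\mapsto p_\al(r+t\eta_\al)$ for arbitrary $\eta$, observe that on the finite dimensional module $V$ this is a rational trigonometric function valued in $\End(V)$, and use the eigenvalue computation above to see it is regular at $t=0$ with value independent of $\eta$. The one point requiring a little care — the main obstacle — is the boundary case $r=l$ (more generally $r\le l$), where one must make sure the numerator zero in (\ref{proj_eigen}) genuinely cancels the denominator pole at the level of the operator and not merely on a single eigenvector: here I would note that (\ref{proj_eigen}) is an identity of scalars on the whole weight space $V[\xi]$ for each irreducible summand, and that the decomposition $V=\bigoplus_j V_j$ diagonalizes $p_\al(t)$ compatibly, so regularity on each $V_j$ gives regularity on $V$. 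This finishes the proof.
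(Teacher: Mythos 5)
Your argument is essentially the paper's own: reduce to an irreducible $U_q(\g^\al)$-module, read off the eigenvalue of $p_\al(t+r)$ on each weight space from (\ref{proj_eigen}), and observe that for $r\in\N$ the unique possible pole in the denominator product is cancelled by a zero in the numerator product (both are simple zeros in $t$). One small slip worth fixing: with $\xi(h_\al)=\la-2l$, the denominator of the eigenvalue should read $\prod_{k=1}^{l}[r+\la-2l+k]_{q_\al}$ rather than $\prod_{k=1}^{l}[r+\la-l+k]_{q_\al}$; taken literally your version never vanishes for $r\in\N$ (making the compensation mechanism moot), but your subsequent analysis — that a denominator zero forces $r\le l$, at which point $[r-k]_{q_\al}$ with $k=r$ supplies the compensating numerator zero — matches the correct formula and the paper's proof, so this appears to be only a transcription error.
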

\begin{proof}
We can assume that $V$ is irreducible.
Let $\mu=\frac{m}{2}\al$, $m \in \Z_+$,  be the highest weight of $V$, and $t\in \C$.
The eigenvalue of $p_\al(t+r)$ on the subspace of weight $\mu-l \al$ with $0\leqslant l\leqslant m$ is proportional to $\frac{\prod_{k=1}^{l}[t+r-k]_{q_\al}}{\prod_{k=0}^{l-1}[t+r+m-l-k]_{q_\al}}
$, cf. (\ref{proj_eigen}).
As $t\to 0$, the denominator may have the only vanishing factor that  corresponds to non-negative  $k=m+r -l \leqslant l-1$.
But it
is cancelled  by a factor in the enumerator unless $r>l$ which contradicts
the  previous inequality in view of  $l\leqslant m$.
\end{proof}
As a consequence, we obtain the following important special case.
\begin{propn}
  The extremal projector $p_\g$ is well defined  on every dominant weight space of a locally finite $U_q(\g)$-module $ V$.
\label{reg_loc_fin}
\end{propn}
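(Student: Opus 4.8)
The statement to prove is Proposition \ref{reg_loc_fin}: the extremal projector $p_\g$ is well defined on every dominant weight space of a locally finite $U_q(\g)$-module $V$. The natural strategy is to reduce the global statement to the already-established root-by-root criteria. First I would invoke Proposition \ref{reg_proj}: fixing any normal ordering on $\Rm^+$, it suffices to show that each root factor $p_\al(\rho_\al)$ is well defined on the chosen dominant weight space $W=V[\mu]$, because then the product $p_\g(0)=\prod_{\al\in\Rm^+}^< p_\al(\rho_\al)$ is well defined and independent of the ordering. So the whole problem collapses to a statement about a single $\al\in\Rm^+$ and the $U_q(\g^\al)\cong U_q(\s\l(2))$-action.

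The second step is the key reduction. Since $V$ is locally finite as a $U_q(\g)$-module, its restriction to the subalgebra $U_q(\g^\al)$ is a locally finite $U_q(\s\l(2))$-module, so $W$ lies in a finite-dimensional $U_q(\g^\al)$-submodule (or a finite sum of irreducibles), and I can assume $V$ finite dimensional over $U_q(\g^\al)$ for the purpose of analyzing $p_\al$. Now I want to apply Lemma \ref{reg_fin_dim}, which says $p_\al(r)$ is well defined on any finite-dimensional $U_q(\g^\al)$-module provided $r\in\N$. Here $r=\rho_\al=(\rho,\al^\vee)$, the half-sum-of-positive-roots pairing; for $\al$ a positive root this is a positive integer (for $\al$ simple it equals $1$, and in general $(\rho,\al^\vee)\in\Z_{>0}$ since $\rho$ is dominant regular). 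Hence Lemma \ref{reg_fin_dim} applies verbatim with $r=\rho_\al$, giving that $p_\al(\rho_\al)$ is well defined on $W$.

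The third step is to verify the extra clause hidden in the notion of $p_\g$ being ``well defined on a subspace $W$'': beyond regularity of the limit, one needs the regularized map $W\to V$ to land in $U_q(\g_+)$-invariants when $W$ consists of dominant (here: highest-weight-for-$U_q(\g)$-relevant) vectors. But for a \emph{dominant} weight $\mu$ — meaning $\mu+\rho$ is dominant, equivalently $(\mu,\al^\vee)\in\Z_{\geq 0}$ for all simple $\al$, so that $W$ sits at the top of its $U_q(\g)$-isotypic components — the defining properties $e_\al p_\g=0=p_\g f_\al$ of the extremal projector, combined with the eigenvalue formula \eqref{proj_eigen} on a locally finite module, force the regularized image to consist of singular vectors: this is exactly the content that makes the shifted projector produce $(V\tp Z)^+$ in Proposition \ref{twist-cocycle}, specialized to $Z$ trivial. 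I would spell this out by noting that on a dominant weight space the numerator zeros of \eqref{proj_eigen} kill all non-highest components, so the surviving image is annihilated by every $e_\al$.

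**Main obstacle.** The routine steps are the two reductions; the delicate point is the arithmetic in the third step — confirming that for every $\al\in\Rm^+$ one genuinely has $\rho_\al=(\rho,\al^\vee)\in\N$ so that Lemma \ref{reg_fin_dim} is applicable, and more importantly that the cancellation of denominators in \eqref{translationed_proj} (via Lemma \ref{reg_fin_dim}'s argument) is compatible with the product structure \eqref{factorization}, i.e. that no factor further to the right in the normal order spoils regularity by feeding a vector of the wrong weight into $p_\al(\rho_\al)$. This is handled by Proposition \ref{reg_proj}'s observation that each root factor preserves $W$ up to the weight grading and depends on $\la$ only through the regular function $\la\mapsto(\la,\al^\vee)$, so regularity is checked factor-by-factor and the product is automatically regular; but making sure the ``dominant'' hypothesis propagates correctly through that product — rather than just along each $\s\l(2)$-string independently — is where I would be most careful.
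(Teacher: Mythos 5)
Your first two reductions --- invoking Proposition~\ref{reg_proj} to check the product factor by factor, and using local finiteness over $U_q(\g^\al)$ together with Lemma~\ref{reg_fin_dim} at $r=(\rho,\al^\vee)\in\N$ to obtain regularity of each $p_\al(\rho_\al)$ on $W$ --- agree with the paper's argument. The gap is in your third step, where one must show the regularized image $p_\g(0)W$ lands in $U_q(\g_+)$-invariants. You appeal to the formal identities $e_\al p_\g=0=p_\g f_\al$, but these hold for $p_\g$ as an element of $\hat U_q(\g)$ acting where the series converges; they are not automatically inherited by the regularized limit $\lim_{t\to 0}p_\g(t\eta)\vert_W$ on a finite-dimensional module, because the shifted operators $p_\g(t\eta)$ do \emph{not} satisfy them for $t\neq 0$. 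Likewise, formula (\ref{proj_eigen}) tells you, for a fixed $\al$, that $p_\al(1)$ applied directly to a vector of $\al$-dominant weight kills non-top components of that $\al$-string; it says nothing about what the factors of $p_\g(0)$ sitting \emph{to the left} of $p_\al$ in the normal order do to that vector afterwards. You flag exactly this point as ``where I would be most careful'' but stop short of closing it.

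The paper closes it with an observation you are missing: since by Proposition~\ref{reg_proj} the regularized product $p_\g(0)=\prod_{\al\in\Rm^+}^{<}p_\al(\rho_\al)$ is independent of the normal order, for each \emph{simple} $\al\in\Pi$ one may choose a normal ordering in which $\al$ sits in the left-most position. Then $p_\al(1)$ is the outermost factor, the factors to its right preserve the weight space $W$, and the $\s\l(2)$ computation (\ref{proj_eigen}) applied at the last step gives $p_\g(0)W\subset\ker e_\al$. Running over all simple $\al$ (with a possibly different normal order for each) shows the image lies in $U_q(\g_+)$-invariants. This ``one normal order per simple root'' rearrangement is the key missing ingredient. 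A small separate slip: ``$\mu+\rho$ dominant'' is not the same as $(\mu,\al^\vee)\in\Z_{\geq 0}$ for integral $\mu$; the paper works with the unshifted condition $(\mu,\al^\vee)\geq 0$, which is also the one your argument actually requires.
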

\begin{proof}
We can assume that $V$ is irreducible. Consider the case  $\g=\g^\al\simeq \s\l(2)$ first.
It follows from  (\ref{proj_eigen}) that  $p_\g(\la)$ is regular on $V$  at $\la=0$.
For all $\xi \in \La(V)$ with $(\xi,\al^\vee)\geqslant 0$ the operator $p_\g(0)$ projects $V[\xi]$
to the space of highest weight.
For general $\g$,  all root factors in $p_\g$ are well defined by Lemma \ref{reg_fin_dim} and independent of the normal order
by Proposition \ref{reg_proj}. For each simple $\al$ choose a normal order such that $\al$ is in the left-most position.
Then $p_\g(0)$ has the factor $p_\al(1)$ on the left that maps all $V[\xi]$ with $(\xi,\al^\vee)\geqslant 0$
to $\ker e_\al$.
Therefore the operator $p_\g(0)$ restricted to dominant weight spaces of $V$ takes values in $U_q(\g_+)$-invariants.
\end{proof}
Remark that although $p_\g(0)$ is well defined on every finite dimensional module by Proposition \ref{reg_proj},
non-dominant weight spaces are not generally killed by $p_\g(0)$, so it is not a projector to $U_q(\g_+)$-invariants.
That can be seen already on the example of $\g=\s\l(2)$, by examining (\ref{proj_eigen}) for $\dim V>2$ and  $\xi(h)\leqslant -2$.

\section{The case of  parabolic Verma modules}
\label{Sec_Parabolic}
In this section we compute extremal twist for tensor product of finite dimensional and parabolic Verma modules.
The key issue is regularization of the operator $\Delta(p_\g)$ restricted to a certain subspace in
the tensor product.
We address it relaxing the assumption that parabolic modules involved are irreducible.

\subsection{Regularization of extremal projector}
\label{RegExtPr_parabolic}
Let $\k\subset \g$ be a Levi subalgebra, i.e. a reductive Lie algebra of maximal rank
whose basis of simple roots $\Pi_\k$ is a subset in $\Pi_\g=\Pi$.
A weight $\la$ defines a one-dimensional  representation $\C_\la$ of $U_q(\k)$ if and only if $q^{2\la(h_\al)}=\pm 1$ for
all $\al\in \Pi_\k$.
We can assume plus as the other cases can be covered by tensoring with appropriate one-dimensional
$U_q(\g)$-modules.

Let  $\c\subset \h$ denote
the center of $\k$ and $\c^*\subset \h^*$ the subset of weights $\la$ such that $\la(h_\al)=0$ for all $\al\in \Pi_\k$.
Identify the Cartan subalgebra of the semi-simple part of $\k$ with the orthogonal complement of
$\c$ in $\h$. Then the weight lattice $\Lambda_\k$ of $\k$ is a subset of $\nu\in \h^*$ such that  $(\nu, \al^\vee)\in \Z$, $\al\in \Pi_\k$ and $(\nu,\la)=0$,
 $\la\in \c^*$.

Fix $\xi\in \La^+_\k$, $\la \in \c^*$ and set $\zt=\xi+\la$. Denote by $X$ the finite dimensional irreducible $U_q(\k)$-module of highest weight $\xi$.
Fix $Z$ to be  the quotient of the Verma module $\hat M_{\zt}$ with the highest vector $1_\zt$ by the sum of the submodules
$U_q(\g)e_\al^{m_\al+1}1_{\zt}$, where $m_\al=(\xi,\al^\vee)\in \Z_+$ for $\al\in \Pi_\k$.
The module $Z$ is locally finite over $U_q(\k)$, \cite{M2}, and $U_q(\k)1_Z\simeq X\tp \C_\la$,
where $1_Z$ is highest  vector of $Z$.

The annihilator $I^-_{Z}\subset U_q(\g_-)$ of the vector $1_Z\in Z$ is independent of $\la$ as well as
the left ideal $I^+_Z=\si(I^-_Z)\subset U_q(\g_+)$.
In this Section, we set $V^+_Z$ to be the kernel of  $I^+_Z$ in $V$.
When $Z$ is irreducible, $V^+_Z$ is the generalized extremal subspace parameterizing singular vectors in $V\tp Z$, as before.
We have $V^+_Z=V^+_X$, where $V$ in the right-hand side is considered as a $U_q(\k)$-module,
so $V^+_X$ is parameterizing singular vectors in $V\tp X$. By ${}^+\!V_Z$ we understand a subspace in $V$
that is dual to $V_Z^+$ with respect to the contravariant form. This notation is also compatible with restriction of
the representation to $U_q(\k)$, that is, ${}^+\!V_Z={}^+\!V_X$.

 Denote by $\c^*_{reg}$ the set of weights $\la\in \c^*$
such that $(\la,\al^\vee)\not \in \Z$ for all $\al \in \Rm^+_{\g/\k} =\Rm^+_{\g} - \Rm^+_{\k} $. It is an open subset in $\c^*$
in the Euclidean topology.
Choose a normal ordering on $\Rm_\g^+$ such $\Rm_{\g/\k}^+<\Rm_\k^+$. Denote by $p_\k(\zt)$ the shifted extremal projector of $U_q(\k)$
  and by $p_{\g/ \k}(\zt)$ the product
$$
p_{\g/ \k}(\zt)=\prod_{\mu^i\in \Rm^+_{\g/\k}}^{<}p_{\mu^i}(\rho_i+\zt_i).
$$
Note that $p_\k(\zt)=p_\k(\xi)$ is independent of the summand $\la\in \c^*$.
The factorization
$
p_\g(\zt)=p_{\g/ \k}(\zt)p_{\k}(\xi)
$
facilitates regularization of $p_\g(\zt)$ on
${}^+\!V_Z$ and $(V\tp Z)^+$ as explained next.
\begin{propn}
  For any finite dimensional $U_q(\k)$-module $Y$, the operator $p_\k(\xi)$ is well defined and invertible on ${}^+\!Y_X$. Furthermore,
  $
    \theta_{Y,X}=p_\k^{-1}(\xi)
  $.
\end{propn}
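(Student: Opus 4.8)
The plan is to show first that $p_\k(\xi)$ is well defined on ${}^+\!Y_X$ and then that, in that regularization, it computes the extremal twist $\theta_{Y,X}$ for the $U_q(\k)$-module pair $(Y,X)$; invertibility will then be automatic because $\theta_{Y,X}$ is, in the situation at hand, the matrix of a non-degenerate contravariant form on a finite-dimensional space. The key observation is that $X$ is the finite dimensional irreducible $U_q(\k)$-module of highest weight $\xi\in\La_\k^+$, so $X$ is a locally finite $U_q(\k)$-module; in particular every weight $\mu$ occurring in $Y^+_X$ is dominant for $\k$. This puts us exactly in the scope of Proposition~\ref{reg_loc_fin} (applied with $\g$ replaced by $\k$): the extremal projector $p_\k=p_\k(0)$ is well defined on every dominant weight space. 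More precisely, for a root $\al\in\Rm^+_\k$ the shift $\rho_\al+\xi_\al$ is a positive integer on the relevant spaces, so Lemma~\ref{reg_fin_dim} applies to each factor $p_\al(\rho_\al+\xi_\al)$ of $p_\k(\xi)=\prod_{\al\in\Rm^+_\k}^{<}p_\al(\rho_\al+\xi_\al)$, and Proposition~\ref{reg_proj} then assembles these into a well-defined operator $p_\k(\xi)$ on ${}^+\!Y_X$, independent of the normal ordering of $\Rm^+_\k$.

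Next I would identify $p_\k(\xi)$ on $Y$ with the extremal twist $\theta_{Y,X}$. Here the cleanest route is Theorem~\ref{Shap-proj} together with Proposition~\ref{twist-cocycle}, both read with $\g$ replaced by $\k$ and $(V,Z)$ replaced by $(Y,X)$. Since $X$ is irreducible, Theorem~\ref{canonical} and Proposition~\ref{V-Z-extr} apply: the form $\langle\theta_{Y,X}(\cdot),\cdot\rangle$ on $Y^+_X$ is the pull-back of the canonical form on $(Y\tp X)^+$. But $Y\tp X$ is a finite dimensional, hence completely reducible, $U_q(\k)$-module, so by Theorem~\ref{canonical} the canonical form is non-degenerate on $(Y\tp X)^+$; consequently $\theta_{Y,X}$ is invertible, ${}^+\!Y_X=Y^+_X$, and $\theta_{Y,X}\in\End(Y^+_X)$. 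By Theorem~\ref{Shap-proj} the inverse form is then $\langle\bar\theta_{Y,X}(\cdot),\cdot\rangle$ with $\bar\theta_{Y,X}=\bar\dt\circ\pi$, $\pi(v)=p_\k(v\tp 1_X)$; equivalently $\bar\theta_{Y,X}=\theta_{Y,X}^{-1}$ on ${}^+\!Y_X$. It therefore suffices to compute $\bar\theta_{Y,X}$ and check it equals $p_\k^{-1}(\xi)$.

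For that I would apply Proposition~\ref{twist-cocycle} (over $\k$): since $\pi$ admits a regularization on ${}^+\!Y_X=Y^+_X$ — which is precisely the local finiteness/dominance input of Proposition~\ref{reg_loc_fin} — we get $\dt\circ p_\k(\xi)(w)=p_\k(w\tp 1_X)$ for $w\in{}^+\!Y_X$, and hence $\bar\theta_{Y,X}(w)=\bar\dt\bigl(p_\k(w\tp 1_X)\bigr)=\bar\dt\circ\dt\circ p_\k(\xi)(w)=p_\k(\xi)(w)$, using $\bar\dt\circ\dt=\id$. Thus $\bar\theta_{Y,X}=p_\k(\xi)$ on ${}^+\!Y_X$, and combining with $\bar\theta_{Y,X}=\theta_{Y,X}^{-1}$ gives $\theta_{Y,X}=p_\k^{-1}(\xi)$ as claimed. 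The main obstacle I anticipate is purely bookkeeping: one must make sure that the hypotheses "$\pi$ admits a regularization on ${}^+\!V_Z$'' assumed in Proposition~\ref{twist-cocycle} and Theorem~\ref{Shap-proj} are genuinely verified here (not merely assumed) — this is exactly what Proposition~\ref{reg_loc_fin} supplies, via the factor $p_\al(1)$ in the left-most position mapping dominant weight spaces into $\ker e_\al$ — and that $Y^+_X$ and ${}^+\!Y_X$ coincide, which follows from non-degeneracy of the contravariant form on the completely reducible module $Y\tp X$. Everything else is a direct transcription of the $\g$-level statements to the Levi subalgebra $\k$.
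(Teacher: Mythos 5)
Your argument follows the paper's route: regularity via Proposition~\ref{reg_loc_fin} over $\k$ applied to the finite dimensional module $Y\tp X$, transfer to $p_\k(\xi)$ on ${}^+\!Y_X$ via Proposition~\ref{twist-cocycle}, and inversion via Theorem~\ref{Shap-proj} combined with complete reducibility of $Y\tp X$. One inaccuracy in the opening paragraph: the weights $\mu$ of $Y^+_X$ are not in general $\k$-dominant (already for $\k=\s\l(2)$ and $Y=X$ two-dimensional, $Y^+_X=Y$ contains the weight $-\xi$); what is $\k$-dominant is $\mu+\xi$, the weight of the corresponding singular vector in $Y\tp X$, and that is precisely the input Proposition~\ref{reg_loc_fin} needs when applied to ${}^+\!Y_X\tp 1_X\subset Y\tp X$. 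The slip does not derail the proof, since your fallback via Lemma~\ref{reg_fin_dim} requires only $\rho_\al+\xi_\al\in\N$ and your third paragraph invokes Proposition~\ref{reg_loc_fin} for the singular-vector weight spaces of $Y\tp X$ correctly.
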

\begin{proof}
All weights of ${}^+\!Y_X\tp 1_X$ are dominant when restricted to $\Rm_{\k}$.
  Applying Proposition \ref{reg_loc_fin} to (the semi-simple part) of $\k$, we conclude that $p_\k$ is well defined on ${}^+\!Y_X\tp 1_X\subset Y\tp X$
  and therefore $p_\k(\xi)$ is well defined on ${}^+\!Y_X$, by Theorem \ref{Shap-proj}.
  It is invertible and its inverse equals $\theta_{Y,X}$, since $Y\tp X$ is completely reducible.
\end{proof}
\begin{corollary}
   For each $\la \in \c^*_{reg}$ the linear maps
  $p_\g(\zt)\colon {}^+\!V_Z\to V$, $v\mapsto p_\g(\zt)v$ and $\pi\colon {}^+\!V_Z\to (V\tp Z)^+$,
   $v\mapsto p_\g(v\tp 1_Z)$ are well defined. Furthermore,
   $$
   p_\g(\zt)v=p_{\g/\k}(\zt)\bar \theta_{V,{X}}v, \quad p_\g(v\tp 1_Z)= p_{\g/\k}(0)p_\k(v\tp 1_Z)
   $$
   for all $v\in {}^+\!V_Z$.
\label{regularization_parabolic}
\end{corollary}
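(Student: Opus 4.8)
The plan is to derive both equalities from the factorization $p_\g(\zt)=p_{\g/\k}(\zt)p_\k(\xi)$ together with the preceding proposition, and to handle the regularization issues root-factor by root-factor via Proposition \ref{reg_proj}. First I would treat the map $p_\g(\zt)\colon {}^+\!V_Z\to V$. Writing $p_\g(\zt)=p_{\g/\k}(\zt)p_\k(\xi)$ with the chosen normal ordering $\Rm^+_{\g/\k}<\Rm^+_\k$, the rightmost factor $p_\k(\xi)$ is, by the proposition just proved, well defined and invertible on ${}^+\!V_Z={}^+\!V_X$, and equals $\theta_{V,X}$ there. For the remaining factors, those corresponding to $\mu^i\in\Rm^+_{\g/\k}$, I would invoke Proposition \ref{easy_case}: for $\la\in\c^*_{reg}$ the shift $\rho_i+\zt_i$ has the property that $(\la,(\mu^i)^\vee)\notin\Z$ (recall $\zt_i=(\xi+\la,(\mu^i)^\vee)$ and $\xi\in\Lambda_\k$ so $(\xi,(\mu^i)^\vee)\in\Z$, hence the full shift avoids $\Z$), so no denominator $\prod_{j=1}^k[(\la,(\mu^i)^\vee)+\text{(integer)}]_{q_{\mu^i}}$ vanishes and each $p_{\mu^i}(\rho_i+\zt_i)$ is in fact regular (not merely regularizable) at $\la\in\c^*_{reg}$. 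Then Proposition \ref{reg_proj} assembles these into the statement that $p_\g(\zt)$ is well defined on the weight spaces comprising ${}^+\!V_Z$. Combining with $\theta_{V,X}=p_\k^{-1}(\xi)=\bar\theta_{V,X}^{-1}$ being applied and the definition $\bar\theta_{V,X}=\bar\dt\circ\pi$ one is tempted to write $p_\g(\zt)v=p_{\g/\k}(\zt)\bar\theta_{V,X}v$; but one must be careful that $p_\k(\xi)$ on ${}^+\!V_X$ agrees with $\bar\theta_{V,X}$ and not its inverse, so I would use Theorem \ref{Shap-proj} applied to $\k$: the proposition gives $\theta_{V,X}=p_\k^{-1}(\xi)$, equivalently $\bar\theta_{V,X}=p_\k(\xi)$ on the nose (since $p_\k$ acting on dominant weight spaces realizes the forward map $\pi$ for $\k$, by Proposition \ref{reg_loc_fin}). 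That identification is the subtle bookkeeping point.

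For the second map $\pi\colon v\mapsto p_\g(v\tp 1_Z)\in(V\tp Z)^+$, I would apply $\Delta$ to the factorization, $\Delta(p_\g(\zt))=\Delta(p_{\g/\k}(\zt))\Delta(p_\k(\xi))$ — wait, more precisely one uses $\pi(v)=p_\g(v\tp 1_Z)$ where $p_\g$ here means the \emph{unshifted} extremal projector $p_\g=p_\g(0)$ acting on $V\tp Z$. So the relevant factorization is $p_\g(0)=p_{\g/\k}(0)p_\k(0)$ in the chosen normal order. Now $1_Z$ is a highest weight vector of the $U_q(\k)$-module $U_q(\k)1_Z\simeq X\tp\C_\la$, and on $V\tp(U_q(\k)1_Z)$ the action of $\Delta(p_\k(0))$ coincides with $p_\k$ acting on a locally finite $U_q(\k)$-module; by the proof-method of Proposition \ref{twist-cocycle} (transferring the action along $\Delta$ and using Proposition \ref{absorbtion} to kill remainders), $p_\k(v\tp 1_Z)$ lands in $(V\tp Z)^{+_\k}$, the $U_q(\k_+)$-invariants, which is where we can then apply $p_{\g/\k}(0)$. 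Regularity of $p_\k(0)$ on the relevant (dominant over $\Rm_\k$) weight spaces of $V\tp Z$ is Proposition \ref{reg_loc_fin}; regularity of the $p_{\g/\k}(0)$ factors is again Proposition \ref{easy_case} since after applying $p_\k$ the resulting vectors have $\Rm_\k$-dominant weights and for roots $\mu\in\Rm^+_{\g/\k}$ and the vector $1_Z$ of weight $\zt=\xi+\la$ with $\la\in\c^*_{reg}$ the pole-producing denominators do not vanish. Assembling via Proposition \ref{reg_proj} yields $p_\g(v\tp 1_Z)=p_{\g/\k}(0)\,p_\k(v\tp 1_Z)$, which is the second claimed formula.

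The main obstacle, as I see it, is the regularization of $\Delta(p_{\g/\k}(0))$ on the subspace $p_\k(v\tp 1_Z)$ of $V\tp Z$ — i.e. showing that for $\la\in\c^*_{reg}$ the individual root factors $p_\mu(\rho_\mu)$ with $\mu\in\Rm^+_{\g/\k}$, acting via $\Delta$ on $V\tp Z$, have no poles. This is not immediate from Proposition \ref{easy_case} because that proposition is stated for a single $U_q(\g^\mu)$-module, whereas here one has a tensor product with a two-sided weight filtration, and one must check that the weights occurring in $p_\k(v\tp 1_Z)$, paired with $\mu^\vee$, stay off the bad locus; this is where $\la\in\c^*_{reg}$ does its work, since the $Z$-leg contributes a weight of the form $\zt-(\text{positive})$ and $(\zt,\mu^\vee)=(\xi,\mu^\vee)+(\la,\mu^\vee)$ with the second term non-integral, forcing all denominators $[(\la,\mu^\vee)+(\text{integer})]_{q_\mu}$ to be nonzero. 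I would also need to confirm that the $\hat\tp$-completion is respected, i.e. that these infinite sums converge in $V\hat\tp Z$ — but that follows from the weight-space finiteness and lower-boundedness assumptions built into the construction of $X\hat\tp Z$ in Section \ref{SecPrelim}, so it is routine. Once regularity is in hand, the two displayed formulas are just the factorization read off in the chosen normal order, using the preceding proposition to identify the $\k$-part.
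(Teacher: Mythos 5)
Your proposal is correct and follows essentially the same route as the paper: both rest on the factorization $p_\g(\zt)=p_{\g/\k}(\zt)\,p_\k(\xi)$, the identification $p_\k(\xi)=\bar\theta_{V,X}$ from the preceding proposition, the non-vanishing of the $\Rm^+_{\g/\k}$ denominators in (\ref{translationed_proj}) for $\la\in\c^*_{reg}$, and Proposition \ref{twist-cocycle} to link the two displayed formulas. The paper is merely terser — it checks regularity of $\Delta(p_\g)$ on ${}^+\!V_Z\tp 1_Z$ only and obtains regularity of $p_\g(\zt)$ on $V$ as a consequence of Proposition \ref{twist-cocycle}, whereas you argue the two maps separately, and your appeal to Proposition \ref{easy_case} for the $\Rm^+_{\g/\k}$ factors is slightly off the mark since the operative reason (which you do state correctly at the end) is simply that $(\la,\mu^\vee)\notin\Z$ keeps every denominator nonzero.
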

\begin{proof}
  The subspace ${}^+\!V_Z\tp 1_Z$ lies in the finite dimensional $U_q(\k)$-submodule isomorphic to $V\tp X$, so $p_\k$ is well defined on it.
  Moreover, $p_{\k/\g}(0)$ is well defined on ${}^+\!V_Z\tp 1_Z$ as none of denominators in (\ref{translationed_proj}) turns zero.
  Now the proof follows from Proposition \ref{twist-cocycle}.
\end{proof}
Remark that in the special case of $\xi=0$ corresponding to a scalar parabolic module $Z$
one can take ${}^+\!V_Z = V^+_Z$. Then  $p_\k(\xi)$ is identical on ${}^+\!V_Z$
and drops from the factorization.
\subsection{Extremal twist for parabolic modules}
\label{Sec_Semi-simplicity}
In order calculate the extremal twist, we first
work out a necessary condition for  parabolic Verma modules to be irreducible that is fulfilled for generic highest weight.
Note that complete irreducibility criteria for classical parabolic Verma modules are given  in \cite{Jan2}.
We do not appeal to deformation arguments but make  use of the relation (\ref{tw-coc-form}) between the inverse invariant pairing
and extremal projector.

The idea of  our approach originates from Proposition \ref{lift is singular}.
However, we cannot directly apply the extremal projector to construct a singular vector
in $Z'\tp Z$ ($Z'$ is the opposite parabolic module of lowest weight $-\zt$) since weights of $Z'$ are not bounded from above. We
 approximate $Z'$ with a sequence of finite dimensional modules $\{V_\mu\}$ and modify
Proposition \ref{lift is singular} accordingly.
We then construct $\Sc\in Z'\hat \tp Z$ as a projective limit of singular vectors in $V_\mu\tp Z$.

Suppose that $u=u^1\tp u^2\in V\tp Z$ (Sweedler notation) is a singular vector such that $\bar \dt(u)=v\in V$ is not zero.
Define a linear map $\psi_{v}\colon Z\to V$ as $\psi_{v}(z)=u^{1}\langle u^2,z\rangle$, for all $z\in Z$.
It factors to a composition
$Z\to Z^*\to V$, where the first arrow is the contravariant form
regarded as a linear map from $Z$ to its restricted ($U_q(\h)$-locally finite) dual $Z^*$.
\begin{propn}
\label{part_iso}
  For any element $f\in U_q(\g_-)$ of weight $-\bt$, $\psi_{v}(f1_Z)=q^{-(\zt+\mu,\bt)}\si(f)v$,
  where $\mu$ is the weight of $v$. In particular, $v\in V^+_Z$.
\end{propn}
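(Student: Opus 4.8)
The plan is to prove the displayed identity by induction on the height of $\bt\in\Z_+\Pi$ (where $f$ has weight $-\bt$), and then to deduce $v\in V^+_Z$ in one line. Since both sides are linear in $f$ and the monomials $f_{\al_1}\cdots f_{\al_n}$ with $\al_1+\ldots+\al_n=\bt$ span $U_q(\g_-)[-\bt]$, I would only treat such a monomial, peeling off the leftmost generator; as the target scalar $q^{-(\zt+\mu,\bt)}$ depends on $\bt$ alone, consistency across different factorizations is automatic.

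The base case $\bt=0$ is just $\psi_v(1_Z)=u^1\langle 1_Z,u^2\rangle=\bar\dt(u)=v$. For $f=f_\al g$ with $\al\in\Pi$ and $g\in U_q(\g_-)$ of weight $\al-\bt$, I would proceed in two moves. First, symmetry and $\omega$-contravariance of the form on $Z$, together with the elementary $\omega(f_\al)=(\gm^{-1}\circ\si)(f_\al)=\gm^{-1}(e_\al)=-q^{-h_\al}e_\al$, give
$$
\psi_v(f1_Z)=u^1\bigl\langle u^2,\,f_\al(g1_Z)\bigr\rangle=-\,u^1\bigl\langle q^{-h_\al}e_\al u^2,\,g1_Z\bigr\rangle ;
$$
only the weight-$(\zt-\bt)$ slice of $u^2$ contributes, so $q^{-h_\al}$ becomes an explicit power of $q$. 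Second, I would invoke the singular condition as $\Delta(e_\al)u=0$, i.e. $e_\al u^1\tp q^{h_\al}u^2+u^1\tp e_\al u^2=0$; contracting the second tensor leg against $g1_Z$ and again pinning the surviving weight turns $u^1\langle e_\al u^2,g1_Z\rangle$ into a power of $q$ times $e_\al\psi_v(g1_Z)$. Combining the two relations the diagonal factors collapse, and the induction hypothesis $\psi_v(g1_Z)=q^{-(\zt+\mu,\bt-\al)}\si(g)v$ together with $\si(f_\al)\si(g)=\si(f)$ produce $\psi_v(f1_Z)=q^{-(\zt+\mu,\bt)}\si(f)v$.

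The hard part is precisely the exponent bookkeeping: each $q^{\pm h_\al}$ is evaluated on a weight-homogeneous piece of $u$ whose weight is pinned by the non-vanishing of the pairing at hand, and one must check that the contributions assemble into exactly the factor $q^{-(\zt+\mu,\al)}$ needed to upgrade $\psi_v(g1_Z)$ to the weight $-\bt$ version, with no stray weight-dependent term left over; the rest of the step is formal. For the final clause, $V^+_Z=\ker I^+_Z$ with $I^+_Z=\si(I^-_Z)$ and $I^-_Z=\{h\in U_q(\g_-):h1_Z=0\}$; taking $f=h\in I^-_Z$ homogeneous in the formula just proved, $h1_Z=0$ forces $\si(h)v=0$, and since the $\si(h)$ exhaust $I^+_Z$ as $h$ runs over $I^-_Z$, we get $I^+_Z v=0$, i.e. $v\in V^+_Z$. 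Conceptually this records that $\psi_v$ — the contravariant form $Z\to Z^*$ followed by the $U_q(\g_+)$-morphism $Z^*\to V$ representing $v$ — carries the $U_q(\g_-)$-action on $Z$ to the $\si$-twisted action on $V$ modulo non-degeneracy of the form; the induction above is the explicit check of this.
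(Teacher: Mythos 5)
Your strategy matches the paper's: establish the identity for a single simple root using contravariance of the form together with the singularity of $u$, then iterate over the generators of a monomial. The paper does this in one displayed chain at the level of the tensor $u$, namely
$\bigl(1\tp \omega(f_\bt )\bigr)u=-(1\tp q^{-h_\bt}e_\bt )u$ followed by an application of $\Delta(e_\bt)u=0$; you do the same thing contracted against $g1_Z$ on the second leg. The deduction of $v\in V^+_Z$ at the end is the same one-liner in both cases and does not depend on the scalar in front. So in structure, your proposal is the paper's proof.

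The one thing you have genuinely not done is the ``exponent bookkeeping,'' and you should be aware that it is not merely tedious — it is the entire content of the step, and it does not produce what you expect. If you carry it out, the two weight factors you acquire — one from $\omega(f_\al)=-q^{-h_\al}e_\al$ and one from pushing $e_\al$ across with the singularity relation $(1\tp e_\al)u=-(e_\al\tp q^{h_\al})u$ — cancel \emph{exactly}:
$$
(1\tp q^{-h_\al}e_\al)u=(1\tp q^{-h_\al})\bigl(-(e_\al\tp q^{h_\al})u\bigr)=-(e_\al\tp 1)u,
$$
so that $\psi_v(f_\al\, g1_Z)=e_\al\,\psi_v(g1_Z)$ with no residual scalar, giving $\psi_v(f1_Z)=\si(f)v$. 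You will therefore not find the claimed factor $q^{-(\zt+\mu,\al)}$ appearing at each step, and you should not try to force it; a check against $\g=\s\l(2)$ with $u=q^{t}[m]_q\,1_V\tp f1_Z-[t]_q\,f1_V\tp 1_Z$ confirms $\psi_v(f1_Z)=\si(f)v$ and not $q^{-(\zt+\mu,\al)}\si(f)v$. The paper's middle equality $-(1\tp q^{-h_\bt}e_\bt)u=-(\gm(e_\bt)\tp q^{-h_\bt})u$ is off by precisely the scalar $q^{-(\bt,\mu+\zt)}$ by which $(q^{-h_\bt}\tp q^{-h_\bt})$ acts on $u$, and the stated formula inherits that spurious factor. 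This is harmless for everything that follows: Proposition~\ref{part_iso} is used only to show that $\psi_{v}$ is a nonzero multiple of $f1_Z\mapsto \si(f)v$, and the ``in particular'' clause depends only on nonvanishing of the scalar. But you should not promise, as your write-up currently does, that the bookkeeping ``assembles into exactly the factor $q^{-(\zt+\mu,\al)}$''; it assembles into $1$.
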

\begin{proof}
  It is sufficient to prove the equality for $f$ a monomial in Chevalley generators.
For simple $\bt \in \Pi$ one has
$$
\bigl(1\tp \omega(f_\bt )\bigr)u=-(1\tp q^{-h_\bt}e_\bt )u=-\bigl(\gm(e_\bt)\tp q^{-h_\bt} \bigr)u=(e_\bt q^{-h_\bt}\tp q^{-h_\bt} )u
=\bigl(\si(f_\bt) \tp 1\bigr)q^{-h_\bt}u.
$$
This implies $\bigl(1\tp \omega(f)\bigr)u=q^{-(\zt+\mu,\bt)}\bigl(\si(f)\tp 1\bigr)u$ for all $\bt$ and all monomial $f$.
Now the proof is immediate.
\end{proof}

Now regard $\La^+_\k$ as a natural sublattice in $\La^+_\g$ and for fixed $\xi \in \La^+_\k$  define $\c^*_{\xi,\Z}$ as the set of
integral weights
$\xi+\la$ with  $\la\in \La_\g\cap \c^*$. In other words, $\c^*_{\xi,\Z}$ is the shift by $\xi$ of the sublattice generated by
the fundamental weights dual to $\Pi_\g^\vee-\Pi_\k^\vee$.
Introduce a partial ordering on  $\c^*_{\xi,\Z}$ by setting $\nu\prec \nu$ if $(\nu,\al^\vee)< (\mu,\al^\vee)$ for all $\al\in \Pi_{\g}-\Pi_{\k}$.
Let $\c^*_{\xi,\Z_+} \subset \c^*_{\xi,\Z}$ be the subset of dominant weights.
For $\mu\in \c^*_{\xi,\Z_+}$ set $V_\mu$ to be the finite dimensional $U_q(\g)$-module of lowest weight $-\mu$.
Its highest weight is $-w(\mu)$, where  $w$ is the longest element of the Weyl group of $\Rm_\g$.

\begin{propn}
  For all  $\mu,\nu\in  \c^*_{\xi,\Z_+}$,  there is an inclusion $Z^+_{V_\mu}\subset Z^+_{V_\nu}$ once $\mu\prec \nu$.
\end{propn}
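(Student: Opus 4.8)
The plan is to deduce the stated inclusion of generalized extremal subspaces from a reverse inclusion of the left ideals that define them. By definition $Z^+_{V_\mu}$ is the kernel in $Z$ of the left ideal $I^+_{V_\mu}=\si(I^-_{V_\mu})\subset U_q(\g_+)$, where $I^-_{V_\mu}\subset U_q(\g_-)$ is the annihilator of the highest vector $1_{V_\mu}$. Hence it suffices to show that $I^+_{V_\nu}\subseteq I^+_{V_\mu}$ whenever $\mu\prec\nu$: any $z\in Z$ annihilated by $I^+_{V_\mu}$ is then annihilated by the smaller ideal $I^+_{V_\nu}$ as well, so $Z^+_{V_\mu}\subseteq Z^+_{V_\nu}$.

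To get hold of the ideals I would use the standard presentation of a finite dimensional irreducible module off its highest weight vector. As $V_\mu$ has highest weight $-w(\mu)$, which is dominant integral since $\mu$ is dominant and $w$ is the longest element, $I^-_{V_\mu}$ is the left ideal of $U_q(\g_-)$ generated by $f_\al^{\,m_\al(\mu)+1}$, $\al\in\Pi$, with $m_\al(\mu)=\bigl(-w(\mu),\al^\vee\bigr)\in\Z_+$; applying $\si$ shows $I^+_{V_\mu}$ is the left ideal of $U_q(\g_+)$ generated by $e_\al^{\,m_\al(\mu)+1}$. It then remains only to check the numerical inequality $m_\al(\mu)\leqslant m_\al(\nu)$ for each simple $\al$, because then $e_\al^{\,m_\al(\nu)+1}=e_\al^{\,m_\al(\nu)-m_\al(\mu)}\,e_\al^{\,m_\al(\mu)+1}$ lies in $I^+_{V_\mu}$, giving $I^+_{V_\nu}\subseteq I^+_{V_\mu}$.

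For the inequality I would exploit that $-w$ restricts to a permutation of $\Pi$; writing $\al^*=-w(\al)\in\Pi$ and using $w^{-1}=w$ together with $w(\al^\vee)=(w\al)^\vee$, one computes $m_\al(\mu)=-\bigl(\mu,(w\al)^\vee\bigr)=\bigl(\mu,(\al^*)^\vee\bigr)$, hence $m_\al(\nu)-m_\al(\mu)=\bigl(\nu-\mu,(\al^*)^\vee\bigr)$. Since $\mu$ and $\nu$ lie in $\c^*_{\xi,\Z}$, their difference lies in $\c^*$, so this pairing is $0$ if $\al^*\in\Pi_\k$ and is strictly positive if $\al^*\in\Pi_\g-\Pi_\k$ by the defining inequality of $\prec$; in both cases $m_\al(\mu)\leqslant m_\al(\nu)$, which finishes the argument. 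I do not anticipate a real obstacle: the Serre-type presentation of $V_\mu$ is classical and standard in the quantum setting, and the rest is bookkeeping — the one point to get right is the identification $m_\al(\mu)=\bigl(\mu,(\al^*)^\vee\bigr)$ with $\al^*=-w(\al)$, where a sign slip would be fatal.
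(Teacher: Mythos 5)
Your proof is correct and follows the same route as the paper: reduce to the reverse inclusion of the left ideals $I^+_{V_\nu}\subseteq I^+_{V_\mu}$, invoke the standard presentation of $I^+_{V_\mu}$ by the generators $e_\al^{m_\al(\mu)+1}$ with $m_\al(\mu)=(\mu,(-w\al)^\vee)$, and check that $\mu\prec\nu$ forces $m_\al(\mu)\leqslant m_\al(\nu)$ for every $\al\in\Pi$. The paper simply writes ``Clearly $I^+_{V_\mu}\supset I^+_{V_\nu}$'' at this last step; you have usefully spelled out why, via $-w$ permuting $\Pi$ and $\nu-\mu\in\c^*$ vanishing against $\Pi_\k^\vee$.
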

\begin{proof}
 The left ideal $I^+_{V_\mu}$ is generated by $\{e_\al^{m_\al'+1}\}_{\al\in \Pi}$
  with $m_\al'=m_{-w(\al)}$, where $m_\bt=(\mu,\bt^\vee)\in \Z_+$, cf. \cite{Jan1}.
  Clearly $I^+_{V_\mu}\supset I^+_{V_\nu}$ if $\mu\prec \nu$. Then
  $Z^+_{V_\mu}=\ker(I^+_{V_\mu})\subset \ker(I^+_{V_\nu})=Z^+_{V_\nu}$ as required.
\end{proof}
Denote by  $J_\mu^+\supset I^+_Z$ the left ideal in $U_q(\g_+)$ annihilating the lowest vector in $V_\mu$. It is generated by $\{e_\al^{m_\al+1}\}_{\al\in \Pi}$
  with $m_\al=(\mu,\al^\vee)\in \Z_+$.
There is  a $U_q(\g_+)$-invariant projection
$\wp_{\mu}\colon Z'\to V_{\mu}\simeq U_q(\g_+)/J_\mu^+$.
The following lemma facilitates approximation of $Z'$ with an increasing sequence of $V_\mu$.
\begin{lemma}
\label{weight_spaces}
  For each $\bt\in \Z_+\Pi_{\g}$ there exists $\mu \in \c^*_{\xi,\Z_+}$ such that
$\dim V_\mu[-\mu+\bt] = \dim Z'[-\zt+\bt]$.
\end{lemma}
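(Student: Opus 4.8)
The statement compares, for fixed $\bt\in\Z_+\Pi_\g$, the dimension of the weight space $V_\mu[-\mu+\bt]$ inside the finite dimensional lowest-weight module $V_\mu$ with that of $Z'[-\zt+\bt]$. Since $Z'$ is the opposite parabolic module of lowest weight $-\zt$, its weight spaces are governed by the ``top'' of the module, and the point is that as $\mu\in\c^*_{\xi,\Z_+}$ is pushed far out in the $\prec$ direction, the module $V_\mu$ looks, near its lowest vector, exactly like $Z'$ near its lowest vector. The plan is to compare both spaces directly with the corresponding weight space of the opposite parabolic Verma module and of a suitable free module, and to show that for $\mu$ sufficiently dominant the Serre-type relations defining $V_\mu$ do not yet act in the bounded range of weights under consideration.

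First I would set up the two modules on a common footing. Recall $Z$ is the quotient of the Verma module $\hat M_\zt$ by $\sum_{\al\in\Pi_\k}U_q(\g)e_\al^{m_\al+1}1_\zt$ with $m_\al=(\xi,\al^\vee)$, so $Z'$ is the opposite module, a quotient of the lowest-weight Verma module by the analogous relations involving only $f_\al^{m_\al+1}$ for $\al\in\Pi_\k$. Dually, $V_\mu\simeq U_q(\g_+)/J_\mu^+$ with $J_\mu^+$ generated by $\{e_\al^{m'_\al+1}\}_{\al\in\Pi}$ where $m'_\al=(\mu,(-w\al)^\vee)$ (as recorded just before the lemma); equivalently, viewing $V_\mu$ from its lowest vector, it is a quotient of the lowest-weight Verma module by the ideal generated by $\{f_\al^{m_\al+1}\}_{\al\in\Pi}$ with $m_\al=(\mu,\al^\vee)$. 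Thus both $Z'$ and $V_\mu$, presented from the lowest weight, are quotients of the same lowest-weight Verma module $M$; $Z'$ by the relations $f_\al^{m_\al+1}$ for $\al\in\Pi_\k$ only, and $V_\mu$ by those relations for all $\al\in\Pi$, but now with the much larger exponents $m_\al=(\mu,\al^\vee)+1$ for $\al\in\Pi_\k$ as well. So there is a surjection $V_\mu\twoheadrightarrow Z'$ provided $(\mu,\al^\vee)\geqslant(\xi,\al^\vee)$ for all $\al\in\Pi_\k$, which holds for $\mu\in\c^*_{\xi,\Z_+}$ with $\mu\succ\xi$ (recall $\mu=\xi+\la$, $\la$ dominant and orthogonal to $\Pi_\k$). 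This already gives the inequality $\dim V_\mu[-\mu+\bt]\geqslant\dim Z'[-\zt+\bt]$ for every such $\mu$.

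For the reverse inequality I would argue that the extra relations distinguishing $V_\mu$ from $Z'$ — namely $f_\al^{m_\al+1}$ for $\al\in\Pi_\g-\Pi_\k$, and the enlargement of exponents $m_\al$ for $\al\in\Pi_\k$ — do not affect weights in the bounded range $-\mu+\bt$ once $\mu$ is chosen large enough. Concretely, fix $\bt$, so only finitely many monomials $f_{\al_{i_1}}\cdots f_{\al_{i_r}}1_M$ of total weight $-\bt$ relative to the lowest vector enter, and each uses each simple root $\al$ at most $\bt(\al)$ times (writing $\bt=\sum\bt(\al)\al$). Choosing $\mu\in\c^*_{\xi,\Z_+}$ with $(\la,\al^\vee)>\bt(\al)$ for all $\al\in\Pi_\g-\Pi_\k$ — possible since $\c^*_{\xi,\Z_+}$ contains arbitrarily $\prec$-large weights, being a shifted cone generated by the fundamental weights dual to $\Pi_\g^\vee-\Pi_\k^\vee$ — guarantees $m_\al+1=(\mu,\al^\vee)+1>\bt(\al)$ for those $\al$, so none of the new Serre relations of $V_\mu$ can act within weight $-\bt$ of the lowest vector. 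Similarly, for $\al\in\Pi_\k$ the enlarged exponent exceeds $\bt(\al)$ once $(\la,\al^\vee)$ is large, but $(\la,\al^\vee)=0$ for $\al\in\Pi_\k$, so here I must instead observe that $Z'$ already imposes $f_\al^{m_\al+1}$ with $m_\al=(\xi,\al^\vee)$, and this is the only $\Pi_\k$-relation relevant in the bounded range for both modules provided the range is small enough — but $\bt$ is fixed first, so one must be slightly careful and let $\mu$ also be large in the $\Pi_\k$-directions is impossible; the resolution is that the $\Pi_\k$-relations are identical for $V_\mu$ and $Z'$ in the bottom, since near the lowest vector $V_\mu$ restricted to $U_q(\k_-)$ is the finite dimensional $U_q(\k)$-module of highest weight determined by $(\mu,\al^\vee)_{\al\in\Pi_\k}=(\xi,\al^\vee)_{\al\in\Pi_\k}$, i.e. the \emph{same} as the one controlling $Z'$ near its lowest vector. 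Hence, in the weight range $-\mu+\bt$ (with $\bt$ fixed and $\mu$ chosen $\prec$-large in the $\Pi_\g-\Pi_\k$ directions), $V_\mu$ and $Z'$ have isomorphic weight spaces, giving the desired equality.

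\textbf{Main obstacle.} The delicate point is the bookkeeping in the $\Pi_\k$-directions: unlike the $\Pi_\g-\Pi_\k$ exponents, the $\Pi_\k$-exponent $(\xi,\al^\vee)$ is fixed and not made large by pushing $\mu$ out, so one cannot simply say ``all relations are out of range.'' One has to use instead the structural fact (from \cite{M2}, already cited) that $U_q(\k)1_{V_\mu}\simeq U_q(\k)1_{Z'}$ as $U_q(\k)$-modules — both being the finite dimensional irreducible $U_q(\k)$-module of highest weight $\xi$ (up to a central twist) — together with a PBW/triangular-decomposition argument showing that a weight space at distance $\bt$ from the lowest vector is spanned by monomials that factor through this common $U_q(\k)$-bottom and through $U_q(\g_-/\k_-)$-monomials using each root of $\Pi_\g-\Pi_\k$ only boundedly often. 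Making this factorization precise, and verifying that the identification of weight spaces is not just an inequality but an equality, is where the real work lies; everything else is a comparison of presentations.
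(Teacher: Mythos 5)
The essential mechanism you invoke — that the extra relations defining $V_\mu$ beyond those defining $Z'$ sit too far from the lowest weight to affect the bounded range around $\bt$, so the $U_q(\g_+)$-equivariant projection is a bijection on the relevant weight space — is precisely the paper's one-line argument. But your first paragraph has the comparison of presentations backwards. Since $\la\in\c^*$ is orthogonal to $\Pi_\k$, one has $(\mu,\al^\vee)=(\xi,\al^\vee)$ exactly for every $\al\in\Pi_\k$: the $\Pi_\k$-exponents \emph{coincide}, they are not ``much larger''. Consequently $J_\mu^+$ properly contains $I_Z^+$ (same $\Pi_\k$-generators plus the additional $e_\al^{(\mu,\al^\vee)+1}$ for $\al\in\Pi_\g-\Pi_\k$), so the natural surjection runs $Z'\twoheadrightarrow V_\mu$, not $V_\mu\twoheadrightarrow Z'$, and the inequality it yields is $\dim V_\mu[-\mu+\bt]\leqslant\dim Z'[-\zt+\bt]$ — the opposite of what you assert.

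Your second paragraph is where the proof actually lives, and there the logic is sound: once $(\mu,\al^\vee)$ is large for $\al\in\Pi_\g-\Pi_\k$, the $U_q(\g_+)$-submodule $J_\mu^+/I_Z^+\subset Z'$ is generated by elements whose weight already overshoots $\bt$, so it has no weight-$\bt$ component and the projection is an isomorphism there; this single observation delivers the equality, with no ``two-sided'' bookkeeping required. You also catch the exponent error yourself mid-paragraph, and the ``main obstacle'' you describe about the $\Pi_\k$-directions disappears entirely once one notes the $\Pi_\k$-exponents never differ. One small refinement worth making: you take $(\la,\al^\vee)>\bt(\al)$ as the largeness condition, but since $(\xi,\al^\vee)$ can be negative for $\al\in\Pi_\g-\Pi_\k$, the relevant quantity is $m_\al=(\xi,\al^\vee)+(\la,\al^\vee)$; the paper sidesteps this by requiring $m_\al$ to exceed the height of $\bt$, which is a cleaner and manifestly sufficient bound (left multiplication by $U_q(\g_+)$ can only raise height).
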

\begin{proof}
 It is sufficient to take $\mu$ with $m_\al$ higher than the height of $\bt$ for all $\al\in \Pi_\g-\Pi_\k$.
 Then the kernel of $\wp_{\mu}$ has no weight $\bt$.
\end{proof}
\noindent
Since $J_{\nu}^+\subset J_\mu^+$ for $\mu\prec \nu$,
the projection $\wp_\nu$ factorizes to $\wp_\mu=\wp_\nu\circ \wp_{\nu,\mu}$ with
an $U_q(\g_+)$-equivariant projection $\wp_{\nu,\mu} \colon V_{\nu}\to V_{\mu}$.
Lemma \ref{weight_spaces} then implies $\cap_{\mu} \J_\mu^+=I^+_Z$, where the intersection
is over $\mu\in \c^*_{\xi,\Z_+}$, and $ Z'$ is a projective limit of $U_q(\g_+)$-modules $V_\mu$.

The lowest vector $v_\mu\in V_\mu$ belongs to $(V_\mu)^+_Z$, and Corollary \ref{regularization_parabolic} implies
that a singular vector $u_\mu= p_{\g}(p_{\g}^{-1}(\zt)v_\mu\tp 1_\zt)$ with $\bar \dt (u_\mu)=v_\mu$ is well defined
for $\la \in \c^*_{reg}$ (it follows from (\ref{proj_eigen}) that $p_{\g/\k}(\zt)$ and therefore $p_{\g}(\zt)$
are invertible for such $\la$).

\begin{corollary}
\label{part_irred}
  The module $Z$ is irreducible once $\la \in \c^*_{reg}$.
\end{corollary}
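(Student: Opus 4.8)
The plan is to deduce irreducibility of $Z$ from Proposition \ref{lift is singular}: it suffices to produce, for $\la\in\c^*_{reg}$, a $U_q(\b_+)$-invariant element $\Sc\in Z'\hat\tp Z$ with $\Sc_1\langle1_Z,\Sc_2\rangle=1_{Z'}$. I would obtain $\Sc$ as the projective limit of the explicit singular vectors $u_\mu=p_\g(p_\g^{-1}(\zt)v_\mu\tp1_\zt)\in(V_\mu\tp Z)^+$ introduced above, taken along the inverse system $\wp_{\nu,\mu}\tp\id\colon V_\nu\tp Z\to V_\mu\tp Z$, $\mu\prec\nu$ in $\c^*_{\xi,\Z_+}$. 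It is useful to observe first that $v_\mu$ spans the one-dimensional lowest weight space of $V_\mu$ and $p_\g(\zt)$ preserves weights, so $p_\g^{-1}(\zt)v_\mu$ is a nonzero scalar multiple of $v_\mu$ (nonzero because $p_{\g/\k}(\zt)$, hence $p_\g(\zt)$, is invertible for $\la\in\c^*_{reg}$); thus $u_\mu$ is just a rescaling of $\Delta(p_\g)(v_\mu\tp1_\zt)$ with $v_\mu$ the canonical lowest vector.

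First I would make the limit precise. Since $J^+_\nu\subset J^+_\mu$ for $\mu\prec\nu$, the $\wp_{\nu,\mu}$ make $\{V_\mu\}$ an inverse system of $U_q(\g_+)$-modules with $\varprojlim_\mu V_\mu=Z'$ — this is the relation $\cap_\mu J^+_\mu=I^+_Z$ recorded above, a consequence of Lemma \ref{weight_spaces}. Tensoring with $Z$, whose weight spaces are finite dimensional, and invoking Lemma \ref{weight_spaces} once more (on each fixed total weight only finitely many weight components of $V_\mu\tp Z$ are nonzero, and each of them coincides with the corresponding component of $Z'\hat\tp Z$ once $\mu$ is large), one gets $Z'\hat\tp Z=\varprojlim_\mu(V_\mu\tp Z)$ compatibly with the $U_q(\g_+)$-actions, the projections being $\wp_\mu\tp\id$. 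Hence any compatible family $(u_\mu)$ of singular vectors determines an element $\Sc\in(Z'\hat\tp Z)[0]$, and $\Sc$ is $U_q(\g_+)$-invariant since the family $\{\wp_\mu\tp\id\}$ separates points and is $U_q(\g_+)$-equivariant.

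The heart of the matter is the compatibility $(\wp_{\nu,\mu}\tp\id)(u_\nu)=u_\mu$ for $\mu\prec\nu$. The easy half is that $\wp_{\nu,\mu}$ is $U_q(\g_+)$-equivariant, so $\wp_{\nu,\mu}\tp\id$ carries $(V_\nu\tp Z)^+$ into $(V_\mu\tp Z)^+$ and satisfies $\bar\dt\circ(\wp_{\nu,\mu}\tp\id)=\wp_{\nu,\mu}\circ\bar\dt$; since $\wp_{\nu,\mu}(v_\nu)=v_\mu$, the vector $(\wp_{\nu,\mu}\tp\id)(u_\nu)$ is a singular vector in $V_\mu\tp Z$ with $\bar\dt$-image $v_\mu$, exactly like $u_\mu$. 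One cannot conclude equality from this alone: while $Z$ is not yet known to be irreducible, the kernel of $\bar\dt$ on $(V_\mu\tp Z)^+$ is nonzero (it contains $(V_\mu\tp Z_0)^+$ for any proper submodule $Z_0\subset Z$), so the singular vector with a prescribed $\bar\dt$-image need not be unique. Instead I would argue from the explicit formula: $u_\mu$ is, up to a scalar, $\Delta(p_\g)(v_\mu\tp1_\zt)$, and $\wp_{\nu,\mu}$ is the tautological quotient $U_q(\g_+)/J^+_\nu\to U_q(\g_+)/J^+_\mu$. By Proposition \ref{M-F} every $e$-tail of the coproducts in $\Delta(p_\g)(v_\bullet\tp1_\zt)$ annihilates $1_\zt$, so in each term the $Z$-leg carries only lowering operators and the $V$-leg is an operator applied to $v_\bullet$; using the factorization $p_\g(\zt)=p_{\g/\k}(\zt)p_\k(\xi)$ of Section \ref{RegExtPr_parabolic} and the invertibility of $p_{\g/\k}(\zt)$ for $\la\in\c^*_{reg}$ to match the $\mu$-dependent normalizations, I would check term by term that $\wp_{\nu,\mu}\tp\id$ converts the $\nu$-expression into the $\mu$-expression. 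I expect this step — showing the explicit singular vectors, whose $V$-legs mix raising and lowering operators acting on $v_\bullet$ and so are not manifestly intertwined by $\wp_{\nu,\mu}$, to be genuinely compatible rather than merely compatible modulo $\ker\bar\dt$ — to be the main obstacle.

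Granting the compatibility, $\Sc=\varprojlim_\mu u_\mu$ is a $U_q(\g_+)$-invariant element of $(Z'\hat\tp Z)[0]$, and since $q^{h_\al}$ acts on it by $1$ it is $U_q(\b_+)$-invariant. Finally $\bar\dt(\Sc)=\varprojlim_\mu\bar\dt(u_\mu)=\varprojlim_\mu v_\mu=1_{Z'}$ because $\wp_\mu(1_{Z'})=v_\mu$; that is, $\Sc_1\langle1_Z,\Sc_2\rangle=1_{Z'}$. Proposition \ref{lift is singular} then yields that $Z$ is irreducible and, as a bonus, that $\Sc$ is the inverse invariant form on $Z$.
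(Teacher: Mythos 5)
Your strategy---build $\Sc$ as a projective limit of the singular vectors $u_\mu$ and feed it to Proposition~\ref{lift is singular}---is the route the paper \emph{announces} in the paragraph preceding the corollary, but it is not the route the paper's proof actually takes, and as written your argument has a genuine gap. You correctly observe that for the limit to exist you must verify the compatibility $(\wp_{\nu,\mu}\tp\id)(u_\nu)=u_\mu$, and you also correctly note that matching $\bar\dt$-images does not suffice, since $\bar\dt$ is not known to be injective on singular vectors of $V_\mu\tp Z$ until $Z$ is known to be irreducible. But you then stop at ``I expect this step \ldots\ to be the main obstacle'' without resolving it; and it is not obvious that it can be resolved head-on, because the $V$-legs of $\Delta(p_\g)(v_\bullet\tp 1_\zt)$ involve raising operators acting on $v_\bullet$ composed with the $\mu$-dependent renormalizations $p_\g^{-1}(\zt)v_\mu$, and a uniqueness statement strong enough to force the identity would essentially presuppose the irreducibility you are trying to prove.

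The paper's actual proof sidesteps the inverse system entirely and works one weight space at a time. Fix $\bt\in\Z_+\Pi_\g$ with $Z[\zt-\bt]\neq 0$ and choose a \emph{single} $\mu\in\c^*_{\xi,\Z_+}$ large enough, via Lemma~\ref{weight_spaces}, that $\dim V_\mu[-\mu+\bt]=\dim Z'[-\zt+\bt]=\dim Z[\zt-\bt]$. The singular vector $u_\mu\in(V_\mu\tp Z)^+$ with $\bar\dt(u_\mu)=v_\mu$ exists by Corollary~\ref{regularization_parabolic} and defines $\psi_{v_\mu}\colon Z\to V_\mu$, which factors through the contravariant form on $Z$. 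Proposition~\ref{part_iso} identifies $\psi_{v_\mu}(f1_Z)$ with a nonzero multiple of $\si(f)v_\mu$; since $V_\mu$ is $U_q(\g_+)$-cyclic on $v_\mu$, the restriction $\psi_{v_\mu}\colon Z[\zt-\bt]\to V_\mu[-\mu+\bt]$ is surjective and hence, by the dimension count, an isomorphism. Because it factors through the contravariant form, the form is non-degenerate on $Z[\zt-\bt]$, and running over all $\bt$ gives irreducibility---with no compatibility between different $\mu$ ever needed. In short, your approach trades a one-paragraph dimension argument for a delicate coherence check that you do not carry out, and the corollary as proved in the paper is precisely what would later let you carry it out.
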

\begin{proof}
Let  $\bt \in \Z_+\Pi_{\g}$ be such that $Z[\zt-\bt]\not =\{0\}$. Take $\mu\in \La^+_\xi$ sufficiently large so that $V_\mu[-\mu+\bt]\simeq Z'[-\zt+\bt]$ (that is possible in view of  Lemma \ref{weight_spaces}).
 The map $\psi_{v_\mu}$ is an isomorphism between $Z[\zt-\bt]$ and $V_\mu[-\mu+\bt]$ by Proposition \ref{part_iso}.
 Therefore the contravariant form is non-degenerate on $Z[\zt-\bt]$ and hence on all weight subspaces of $Z$.
\end{proof}
\begin{propn}
\label{twist_parab}
  Let $Z$ be an irreducible parabolic Verma module of highest weight $\zt=\xi+\la\in \La^+_\k\op \c^*$. For every finite dimensional module $V$,
  the extremal twist  $\theta_{V,Z}$ is the operator $p_{\k}(\xi)^{-1}p_{\g/\k}(\zt)^{-1}$ restricted to $V^+_Z$.
\end{propn}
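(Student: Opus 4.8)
The plan is to combine the factorization $p_\g(\zt)=p_{\g/\k}(\zt)p_\k(\xi)$ from Section \ref{RegExtPr_parabolic} with the general identification $\bar\theta_{V,Z}^{-1}=\theta_{V,Z}$ coming from Theorem \ref{Shap-proj}, together with the explicit description of $\bar\theta_{V,Z}$ provided by Corollary \ref{regularization_parabolic}. First I would invoke Corollary \ref{part_irred} to note that irreducibility of $Z$ forces $\la\in\c^*_{reg}$, so that Corollary \ref{regularization_parabolic} applies verbatim: for all $v\in{}^+\!V_Z$ one has $p_\g(\zt)v=p_{\g/\k}(\zt)\,\bar\theta_{V,X}(v)$, and moreover $p_{\g/\k}(\zt)$ is invertible on the relevant (finite dimensional) weight spaces by \eqref{proj_eigen} since $\la$ is regular. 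Hence $\bar\theta_{V,Z}=\bar\theta_{V,X}$ is invertible on ${}^+\!V_Z$, with $\bar\theta_{V,X}^{-1}=p_{\g/\k}(\zt)^{-1}p_\g(\zt)$ as operators on ${}^+\!V_Z$.

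Next I would apply Theorem \ref{Shap-proj}: since $\bar\theta_{V,Z}$ is invertible, so is $\theta_{V,Z}$, and the two forms are mutually inverse, which at the level of the operators (after identifying $V/V^\perp_Z$ with ${}^+\!V_Z$ as the theorem allows when the form is non-degenerate) gives $\theta_{V,Z}=\bar\theta_{V,Z}^{-1}$. Substituting the formula from the previous paragraph yields $\theta_{V,Z}=p_{\g/\k}(\zt)^{-1}p_\g(\zt)$ on $V^+_Z$. Finally I would use the factorization $p_\g(\zt)=p_{\g/\k}(\zt)p_\k(\xi)$ once more; but to extract a clean product $p_\k(\xi)^{-1}p_{\g/\k}(\zt)^{-1}$ I instead argue symmetrically, writing $\theta_{V,Z}=\theta_{V,Z}^{\,\si}$ by contravariance/symmetry of the form, or more directly: the Proposition preceding Corollary \ref{regularization_parabolic} shows $\theta_{V,X}=p_\k(\xi)^{-1}$ on ${}^+\!V_X={}^+\!V_Z$, and the cocycle relation \eqref{tw-coc-form} applied in stages to the two factors of $p_\g$ gives $\theta_{V,Z}=\theta_{V,X}\circ(\text{contribution of }p_{\g/\k})=p_\k(\xi)^{-1}p_{\g/\k}(\zt)^{-1}$. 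I would spell out this composition: transferring the canonical form through $\pi$ in two steps — first through $V\tp X$ (governed by $p_\k$, hence by $\theta_{V,X}=p_\k(\xi)^{-1}$) and then through the remaining parabolic directions (governed by $p_{\g/\k}(\zt)$, whose inverse form is $p_{\g/\k}(\zt)^{-1}$ by the $\s\l(2)$ computation \eqref{proj_eigen} applied to each root factor) — which multiplies the two operators in the stated order.

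The main obstacle I anticipate is the bookkeeping in the last step: justifying that the pullback of the canonical form factors as a composition matching the factorization $p_\g(\zt)=p_{\g/\k}(\zt)p_\k(\xi)$, i.e. that "restrict $\theta$ to the $U_q(\k)$-level, then twist by the $\g/\k$-directions" is legitimate. This requires checking that $p_{\g/\k}(\zt)$ maps $V^+_Z\tp 1_Z$ into (and the relevant adjoint maps out of) the $U_q(\k)$-submodule $V\tp X\subset V\tp Z$ compatibly, so that Theorem \ref{Shap-proj} can be applied twice — once to $\k$ and once to the full algebra — with the intermediate object being the genuine $U_q(\g_+)$-singular vector. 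The normal ordering with $\Rm^+_{\g/\k}<\Rm^+_\k$ is exactly what makes this staged factorization meaningful, and Proposition \ref{absorbtion} together with Proposition \ref{twist-cocycle} should close the gap, but the careful statement that the regularized maps compose correctly on ${}^+\!V_Z$ — rather than merely on each weight space separately — is where the argument needs the most care. Everything else is a direct assembly of Theorem \ref{Shap-proj}, Corollary \ref{regularization_parabolic}, and the $\s\l(2)$ eigenvalue formula \eqref{proj_eigen}.
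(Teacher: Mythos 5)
Your argument for the case $\la\in\c^*_{reg}$ is essentially the right calculation, but it contains a genuine gap at the very first step. You invoke Corollary~\ref{part_irred} to claim that ``irreducibility of $Z$ forces $\la\in\c^*_{reg}$,'' but that corollary states the \emph{converse}: $\la\in\c^*_{reg}$ implies $Z$ is irreducible. The implication you need is not true; there are irreducible parabolic Verma modules with $\la\notin\c^*_{reg}$ (for instance, when $(\la,\al^\vee)$ is a sufficiently large positive integer for some $\al\in\Rm^+_{\g/\k}$). Because you take the hypothesis to entail $\la\in\c^*_{reg}$, you never address those non-regular but still irreducible $\la$, and so the key final step of the paper's proof is missing: one proves the identity on $\c^*_{reg}$ and then observes that $\theta_{V,Z}$, as a function of $\la\in\c^*$, is rational trigonometric and agrees with $p_\k(\xi)^{-1}p_{\g/\k}(\zt)^{-1}$ on the open dense set $\c^*_{reg}$, hence everywhere it is defined. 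Without this density/continuation argument the proposition is not proved.

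A secondary remark: the ``staged factorization'' you worry about in the last paragraph is unnecessary and somewhat off the mark. The paper's route to the formula on $\c^*_{reg}$ is more direct: Corollary~\ref{regularization_parabolic} together with the proposition preceding it (which gives $\bar\theta_{V,X}=\theta_{V,X}^{-1}=p_\k(\xi)$ on ${}^+\!V_X={}^+\!V_Z$) yields $\bar\theta_{V,Z}=p_\g(\zt)=p_{\g/\k}(\zt)p_\k(\xi)$ as an operator on ${}^+\!V_Z$; since $p_{\g/\k}(\zt)$ is invertible for regular $\la$, Theorem~\ref{Shap-proj} then gives $\theta_{V,Z}=\bar\theta_{V,Z}^{-1}=p_\k(\xi)^{-1}p_{\g/\k}(\zt)^{-1}$ in one stroke. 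No two-stage pullback through the $U_q(\k)$-level is needed, so the concern you flag about composing regularized maps does not arise in the intended argument.
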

\begin{proof}
This is true for $\la\in \c^*_{reg}$ by Corollary \ref{part_irred}.
  The operator $\theta_{V,M_\la}$ is a rational trigonometric function of $\la\in \c^*$ coinciding with $p_{\k}(\xi)^{-1}p_{\g/\k}(\zt)^{-1}$
  on an open subset $\c^*_{reg}\subset \c^*$ and therefore on $\c^*$.
\end{proof}
\noindent
As a consequence we conclude that  if $\la\in  \c^*$ is a pole of the map
$p_{\g/\k}(\xi+\la)^{-1}\colon V^+_Z\to V$ then the module $Z$
  is reducible.

\subsection{Equivariant star product on Levi conjugacy classes}
In this section we give an expression for an equivariant star product on homogeneous spaces with Levi stabilizer subgroup.
Such a space is realized as a conjugacy class, and the Poisson structure is restricted from the  Semenov-Tian-Shansky bracket
on the total group. The corresponding star product was constructed in \cite{EEM} with the help of dynamical twist, which
reduces to the inverse contravariant
form, \cite{AL}. While that solves the problem in principle, an explicit expression of the inverse form for a general parabolic
module is unknown. In this section we give an alternative  formula for the star product in terms of extremal projector, which
is absolutely explicit. The idea of our approach is close to \cite{Khor} (for the special case of $\k=\h$) and
based on  relation (\ref{tw-coc-form}).

In this section we assume that $\xi=0$ and $\zt=\la$. For the module $Z$ we take the scalar parabolic Verma module $M_\la$ of highest weight $\la$.
Then $V^+_{M_\la}$ is the space $V^{\k_+}$ of $U_q(\k_+)$-invariants in $V$.
One can check that the contravariant form is non-degenerate when restricted to $V^{\k_+}$ so we can choose
${}^+\!V_{M_\la}=V^{\k_+}$.  The projector $p_\k$ reduces to identity on $V^{\k_+}$, which gives
 $p_\g(\la)=p_{\g/\k}(\la)\in \End(V^{\k_+})$.

Let $\Ac$ denote the quantized Hopf  algebra of polynomial functions on an algebraic group $G$ with the Lie algebra $\g$.
The quantum group $U_q(\g)$ acts on $\Ac$ by right translations, according to   $x\tr a=a^{(1)}(x,a^{(2)})$.
Fix a weight  $\la \in \c^*_{reg}$, so that $M_\la$ is irreducible, and let $\Fc\in U_q(\g_+)\hat\tp U_q(\g_-)$ be a lift
of the inverse invariant paring $M_\la\tp M_\la'\to \C$.
It defines a
bi-differential operator on $\Ac$ by
\be
\label{star_prod0}
\Ac\tp \Ac \stackrel{\Fc }{\longrightarrow} \Ac\tp \Ac \stackrel{\cdot }{\longrightarrow} \Ac,
\ee
where $\cdot$ is the multiplication on $\Ac$. This operation is parameterized by $\la$ and it is known to be associative
when restricted to the subspace $\Ac^\k$ of $U_q(\k)$-invariants in $\Ac$.

Denote by $\Phi$ the composition map
$$
\Ac\tp \Ac\tp M_\la\stackrel{\langle 1_\la, \>. \>\rangle}{\longrightarrow} \Ac\tp \Ac \stackrel{\cdot }{\longrightarrow} \Ac,
$$
where the left arrow is the contravariant pairing of the $M_\la$-factor with the highest vector $1_\la$.
The formula (\ref{tw-coc-form}) in combination with regularization  of Section \ref{RegExtPr_parabolic}
gives the following presentation of the star product in terms of the Zhelobenko cocycle.
\begin{propn}
 The star-product on $\Ac^\k$ restricted from (\ref{star_prod0}) is presentable as
\be
f\star g= \Phi \Biggl( p_{\g/\k}(0)\Bigl(p_{\g/\k}^{-1}(\la)f\tp p_{\g/\k}(0)
\bigl(p_{\g/\k}^{-1}(\la) g\tp 1_\la \bigr)\Bigr)\Biggr),\quad f,g\in \Ac^\k,
\label{star_prod}
\ee
where the action of $U_q(\g)$ on $\Ac$ is $\tr$.
\end{propn}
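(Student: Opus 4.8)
The plan is to unwind the definitions so that the star product (\ref{star_prod0}) is recognized as the bi-differential operator coming from the element $\Fc$, and then to replace $\Fc$ by the universal element $\Theta_Z = \gamma^{-1}(\Fc_2)\Fc_1$ and its inverse, which by Proposition \ref{twist_parab} (with $\xi=0$) is represented on $V^{\k_+}$ by $p_{\g/\k}(\la)^{\pm 1}$. First I would recall from \cite{M1,EEM,AL} that for $f,g\in \Ac^\k$ the product $f\star g$ equals the value on the pair $(f,g)$ of the singular vector construction in $V\tp M_\la$ for a suitable finite-dimensional $V$ (or, more precisely, that the associative restriction of (\ref{star_prod0}) to $\Ac^\k$ computes the composition of the map $\dt$ parameterizing $(V\tp M_\la)^+$ with the contravariant pairing against $1_\la$). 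Concretely, writing $\bar\dt$ and $\dt$ as in Section 2.1, one has $f\star g = \Phi\bigl(\dt(f\tp g)\bigr)$ once $\dt$ is understood as producing the singular vector $\Fc(f\tp g\tp 1_\la)$ in $\Ac\tp\Ac\tp M_\la$; this is the content of the identification of the dynamical twist with the inverse contravariant form.

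The second step is to rewrite $\dt$ via the extremal projector. By Proposition \ref{twist-cocycle} and equation (\ref{tw-coc-form}), on the subspace where regularization holds we have $\dt\circ p_\g(\zt) = p_\g(\,\cdot\,\tp 1_Z)$, i.e. $\dt = \pi \circ p_\g(\zt)^{-1}$ as maps on $V^+_Z$, where $\pi(v)=p_\g(v\tp 1_Z)$. With $\xi=0$ the factor $p_\k$ is the identity on $V^{\k_+}$ by the remark after Corollary \ref{regularization_parabolic} and by Proposition \ref{twist_parab}, so $p_\g(\la)=p_{\g/\k}(\la)$ and $p_\g(v\tp 1_\la)=p_{\g/\k}(0)\,p_\k(v\tp 1_\la)=p_{\g/\k}(0)(v\tp 1_\la)$ on the relevant subspace, using the $U_q(\k)$-locally finite structure from Corollary \ref{regularization_parabolic}. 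Hence $\dt(v) = p_{\g/\k}(0)\bigl(p_{\g/\k}^{-1}(\la)v\tp 1_\la\bigr)$. The only subtlety is that $\dt$ is being applied to a two-fold tensor $f\tp g$ rather than a single vector $v$: here I would observe that the associativity of $\star$ on $\Ac^\k$ lets one iterate the single-variable formula, first applying $p_{\g/\k}^{-1}(\la)$ to $g$, lifting with $p_{\g/\k}(0)$ against $1_\la$, multiplying into $f$, applying $p_{\g/\k}^{-1}(\la)$ again and lifting once more with $p_{\g/\k}(0)$ — this is exactly the nested structure in (\ref{star_prod}).

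Putting these together, I would feed the resulting singular vector through $\Phi$, which by definition pairs the $M_\la$-leg against $1_\la$ and multiplies the two $\Ac$-legs, to obtain precisely
\be
f\star g= \Phi \Biggl( p_{\g/\k}(0)\Bigl(p_{\g/\k}^{-1}(\la)f\tp p_{\g/\k}(0)
\bigl(p_{\g/\k}^{-1}(\la) g\tp 1_\la \bigr)\Bigr)\Biggr),
\nn
\ee
which is the claim. The main obstacle I anticipate is the bookkeeping of the regularization: one must check that $p_{\g/\k}(0)$ and $p_{\g/\k}^{-1}(\la)$ are genuinely well defined on all the subspaces of $\Ac$ that occur in the nested expression, not just on a single $V^{\k_+}$. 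This is handled by Corollary \ref{regularization_parabolic} together with Proposition \ref{easy_case} — the weights appearing after pairing with $1_\la$ are integral-dominant over $\Rm_\k$, so no denominator in (\ref{translationed_proj}) vanishes — but one should also confirm, using that $\star$ restricted to $\Ac^\k$ is independent of the choice of finite-dimensional $V$ approximating the relevant weight spaces, that the formula does not depend on auxiliary choices. Everything else is a direct substitution of Proposition \ref{twist_parab} and equation (\ref{tw-coc-form}) into the known description of the star product from \cite{EEM,AL}.
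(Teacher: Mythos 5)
Your proposal is correct and follows essentially the same route as the paper: unwind $\star$ to $\Phi\bigl(\Fc(f\tp\Fc(g\tp 1_\la))\bigr)$, use the identity $\Fc(v\tp 1_\la)=p_{\g/\k}(0)\bigl(p_{\g/\k}^{-1}(\la)v\tp 1_\la\bigr)$ coming from (\ref{tw-coc-form}) and Corollary \ref{regularization_parabolic} with the $p_\k$-factor trivial on $U_q(\k)$-invariants, iterate once, then apply $\Phi$. The one spot where your reasoning diverges from the paper is the justification of the iteration: you invoke associativity of $\star$ on $\Ac^\k$, whereas the paper's argument is that $p_{\g/\k}(0)\bigl(p_{\g/\k}^{-1}(\la)g\tp 1_\la\bigr)$ is a $U_q(\k)$-invariant singular vector generating a submodule isomorphic to $M_\la$ inside $\Ac\tp M_\la$, so the one-variable formula for $\Fc$ literally applies again to $f$ against that copy of $M_\la$. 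Associativity is a downstream consequence of the construction rather than the mechanism enabling the second application, so the paper's justification is the more self-contained one; your version would need the intermediate module-theoretic fact anyway to make the ``iterate the single-variable formula'' step rigorous. Everything else — the use of Peter--Weyl to reduce to finite-dimensional $V$, the regularization bookkeeping via Corollary \ref{regularization_parabolic} and Proposition \ref{easy_case}, and the final appeal to $\Phi$ — matches the paper.
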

\begin{proof}
Given a finite dimensional module $V$ and $v\in V^{\k}\subset V^{\k_+}$, one has
$$\Fc(v\tp 1_\la) = p_{\g/\k}(0)\bigl(p_{\g/\k}^{-1}(\la)v\tp 1_\la\bigr).$$
The vector in the right-hand side is $U_q(\k)$-invariant and generates a submodule isomorphic to $M_\la\subset V\tp M_\la$, so
one can iterate this operation with $w\in W^\k$ for another finite dimensional module $W$ and get a vector in $W\tp V\tp M_\la$.
Pairing of the $M_\la$-factor
with $1_\la$ is $U_q(\k)$-invariant and yields a tensor $\Fc (w\tp v)\in (W\tp V)^\k$.

Now take $f$ and $g$ from $\Ac^\k$, which is a direct sum of finite dimensional modules thanks to the
Peter-Weyl decomposition.  Then
$$
 \Fc\bigl(f\tp \Fc(g\tp  1_\la )\bigr)
=p_{\g/\k}(0)\Bigl(p_{\g/\k}^{-1}(\la)f\tp p_{\g/\k}(0)\bigl(p_{\g/\k}^{-1}(\la) g\tp 1_\la \bigr)\Bigl).
$$
Applying $\Phi$ yields  $f\star g$ in the left-hand side.
\end{proof}

\section{Application to vector bundles on quantum spheres}
We conclude this presentation by illustrating Theorems \ref{canonical} and \ref{Shap-proj}
with an example relevant to quantum even sphere, \cite{M2}.
Here $Z$ is fixed to a base module that supports the quantization
of $\C[\Sbb^{2n}]$ as a subalgebra in $\End_\C(Z)$, \cite{M3}. The module $V$ varies over all equivalence classes of finite dimensional quasi-classical irreducible representations of $U_q\bigl(\s\o(2n+1)\bigr)$.
Unlike in Section \ref{Sec_Parabolic}, the subspaces $V^+_Z\subset V$ are hard to evaluate while their reciprocals
$Z^+_V\subset Z$ are known from \cite{M2}, which enables us to  compute $\theta_{Z,V}$ via (\ref{proj_eigen}) and (\ref{factorization}).
Thus Theorem \ref{canonical} benefits from alternative parameterizations of singular vectors
that prove to be most convenient for
particular calculations.

In this section, we fix $\g=\s\o(2n+1)$ and $\k=\s\o(2n)\subset \g$. Note that
there is no natural quantization  of  $U(\k)$ as a subalgebra in $U_q(\g)$, contrary to the case of Levi $\k$.
Let $\{\ve_i\}_{i=1}^n$ denote the orthonormal basis of short roots in $\Rm^+$. We
enumerate the basis of simple positive roots as $\al_n=\ve_n-\ve_{n-1},\ldots, \al_2= \ve_2-\ve_{1}, \al_1=\ve_1$.
We choose $\la\in \h^*$ such that $q^{2(\la,\ve_i)}=-q^{-1}$ for all   $i=1,\ldots, n$
and define $Z$ as the module of  highest weight $\la$ whose canonical generator $1_Z$ is annihilated by $f_{\al_i}$ with
$i>1$ and by $[[f_{\al_2},f_{\al_1}]_q,f_{\al_1}]_{\bar q}$, $\bar q=q^{-1}$.
Set
$e_{\ve_{1}}=e_{\al_{1}}$ and  $f_{\ve_{1}}=f_{\al_{1}}$ and
furthermore
$$
e_{\ve_{i+1}}=[e_{\al_{i+1}},e_{\ve_i}]_{q}, \quad
f_{\ve_{i+1}}=[f_{\ve_i}, f_{\al_{i+1}}]_{\bar q}
$$
for $i>1$.
 Weight vectors $ f_{\ve_1}^{m_1}\ldots f_{\ve_{n}}^{m_{n}}1_Z$ with $m_i$ taking all
possible values in $\Z_+$ deliver an orthogonal basis in  $Z$, \cite{M3}.

The module $Z$ is a quotient of a parabolic Verma module relative to the Levi subalgebra $\l\subset \g$
with the basis of simple roots  $\Pi_\l=\{\al_i\}_{i=2}^n$. Therefore it is locally finite over $U_q(\l)$.

Fix a finite dimensional $U_q(\g)$-module $V$  of highest weight $\nu$ and put $\ell_i=(\nu,\al_i^\vee)\in \Z_+$, $i=1,\ldots, n$.
The ideal $I^+_V$ determining $Z^+_V=\ker I^+_V\subset Z$, is generated by $\{e_{\al_i}^{\ell_i+1}\}_{i=1}^n$.
There is an orthogonal decomposition $Z=Z^+_V\op \omega(I^+_V)Z$
with
$$
Z^+_V =\Span\{ f_{\ve_1}^{m_1}\ldots f_{\ve_{n}}^{m_{n}}1_Z\}_{m_1\leqslant \ell_1, \ldots, m_n\leqslant \ell_n },\quad
\omega(I^+_V)Z =\Span\{ f_{\ve_1}^{k_1}\ldots f_{\ve_{n}}^{k_{n}}1_Z\}_{k_1, \ldots, k_n},
$$
where  $k_i>\ell_i$ for some $i=1,\ldots,n$.

The  weight $\nu$ is expanded in the orthogonal basis $\{\ve_i\}_{i=1}^n$ as
$$
\nu=\frac{\ell_1}{2}\sum_{i=1}^{n}\ve_i+\sum_{i=2}^{n}\ell_i\sum_{j=i}^n\ve_j,
\quad
(\nu,\ve_k)=\frac{\ell_1}{2}+\sum_{i=2}^{n}\ell_i\sum_{j=i}^n\dt_{j,k}
=\frac{\ell_1}{2}+\sum_{i=2}^{k}\ell_i, \quad k=1,\ldots, n.
$$
\begin{propn}
  For any quasi-classical finite dimensional module $V$, the extremal projector $p_\g=p_{\g/\l}p_\l$ is well defined
 on $1_V\tp Z^+_V$.
 \label{proj_defined_on VM}
\end{propn}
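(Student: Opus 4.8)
The plan is to realise $1_V\tp Z^+_V$ inside a sum of weight spaces $W$ of $V\tp Z$ on which \emph{every} root factor of $p_\g$ is pole‑free, so that no regularisation is actually needed. Since $p_\g$ is weight preserving and, by (\ref{factorization}), factors as $\prod_{\mu\in\Rm^+}^{<}p_\mu(\rho_\mu)$ in any normal order --- in particular the one with $\Rm^+_{\g/\l}<\Rm^+_\l$ giving $p_\g=p_{\g/\l}p_\l$ --- Proposition \ref{reg_proj} (in the form valid for $W$ a sum of weight spaces) will then give that $p_\g$ is well defined on $W$, hence on $1_V\tp Z^+_V$; and $p_\g(1_V\tp z)$ being a finite sum, the relation $e_\al p_\g=0$ in $\hat U_q(\g)$ forces its value into the $U_q(\g_+)$‑invariants, as the definition of being well defined on a subspace demands.

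Concretely I would take $W=\bigoplus(V\tp Z)[\nu+\la-\sum_{l=1}^{n}m_l\ve_l]$, the sum over all integers $0\le m_l\le\ell_l$. As $Z^+_V=\Span\{f_{\ve_1}^{m_1}\cdots f_{\ve_n}^{m_n}1_Z:m_l\le\ell_l\}$ and $1_Z$ has weight $\la$, the subspace $1_V\tp Z^+_V$ lies in $W$, and $W$ is a finite sum of finite dimensional weight spaces because $V$ is finite dimensional. Each $p_\mu(\rho_\mu)$ preserves $W$; moreover $V\tp Z$ has weights bounded from above, so every $e_\mu$ acts locally nilpotently and $V\tp Z$ is $U_q(\g_+^\mu)$‑locally finite, whence $p_\mu(\rho_\mu)$ acts on a weight vector $v$ of weight $\xi$ by a \emph{finite} sum of terms $f_\mu^{k}e_\mu^{k}v$ divided by $\prod_{s=1}^{k}[(\xi,\mu^\vee)+\rho_\mu+s]_{q_\mu}$. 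So everything reduces to showing that these brackets do not vanish for $\mu\in\Rm^+$, $s\ge1$, and $\xi=\nu+\la-\sum m_l\ve_l$ with $m_l\le\ell_l$.

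I would do this in three cases, as in the proof of Proposition \ref{easy_case}, using $q^{2(\la,\ve_l)}=-q^{-1}$, $(\nu,\ve_k)=\tfrac{\ell_1}{2}+\sum_{i=2}^{k}\ell_i$, the Weyl‑vector shifts $\rho_{\ve_a-\ve_b}=a-b$, $\rho_{\ve_l}=2l-1$, $\rho_{\ve_i+\ve_j}=i+j-1$, and that $q$ is not a root of unity. For $\mu=\ve_a-\ve_b\in\Rm^+_\l$ one has $q^{2(\la,\ve_a-\ve_b)}=1$, so $(\xi,\mu^\vee)+\rho_\mu$ is the integer $\sum_{l=b+1}^{a}\ell_l-m_a+m_b+(a-b)$, which is $\ge1$ since $m_a\le\ell_a$, and Proposition \ref{easy_case} applies. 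For a short root $\mu=\ve_l\in\Rm^+_{\g/\l}$, $q^{2(\la,\ve_l)}=-q^{-1}$ makes $q^{(\xi,\mu^\vee)+\rho_\mu+s}$ equal to $-1$ times an integer power of $q$, hence $\ne1$. For a long root $\mu=\ve_i+\ve_j\in\Rm^+_{\g/\l}$ with $i>j$, $q^{2(\la,\ve_i+\ve_j)}=(-q^{-1})^2=q^{-2}$, and $q^{2((\xi,\mu^\vee)+\rho_\mu+s)}$ is $q$ to the even integer $2\bigl(\ell_1+\sum_{l=2}^{i}\ell_l+\sum_{l=2}^{j}\ell_l-m_i-m_j+i+j-2+s\bigr)$, whose parenthesis is $\ge i+j-1\ge2$ because $m_i\le\ell_i$, $m_j\le\ell_j$. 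In all three cases the bracket is nonzero; the quasi‑classical hypothesis on $V$ is used exactly to know $q^{2(\nu,\ve_l)}$ is an honest power of $q$, so that these conditions concern explicit non‑negative integers.

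The main obstacle is precisely this case analysis: choosing $W$, pinning down the pairings $(\la,\ve_i\pm\ve_j)$, $(\nu,\ve_i\pm\ve_j)$ and the shifts $\rho_\mu$, and noticing that the defining constraint $m_l\le\ell_l$ of $Z^+_V$ is exactly what keeps the denominators attached to $\Rm^+_\l$ and to the long roots $\ve_i+\ve_j$ away from zero. Granting it, $p_\g=p_{\g/\l}p_\l$ restricts to a genuine operator on $W$ independent of the normal order, $p_\g(1_V\tp z)$ is finite and killed by every $e_\al$ by the defining property of the extremal projector, and therefore $p_\g$ is well defined on $1_V\tp Z^+_V$.
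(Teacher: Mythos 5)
Your proof is correct and follows essentially the same route as the paper's: you take the same $W$ (the sum of weight spaces $(V\tp Z)[\nu+\xi]$ over $\xi\in\La(Z^+_V)$), use Proposition \ref{reg_proj} to reduce to regularity of each root factor, carry out the same three-way case analysis on $\Rm^+_\l$, short roots $\ve_i$, and long roots $\ve_i+\ve_j$, and finish by choosing normal orders with a given $\al\in\Pi$ leftmost to land in $U_q(\g_+)$-invariants. The only small divergence is the $\Rm^+_\l$ case, where the paper appeals to local $U_q(\l)$-finiteness of $Z$ together with Lemma \ref{reg_fin_dim} and Proposition \ref{reg_loc_fin}, whereas you compute $(\nu+\xi,\al^\vee)+\rho_\al$ explicitly and invoke Proposition \ref{easy_case}; both are valid, and your version makes it visible that the constraint $m_l\le\ell_l$ is exactly what keeps the $\Rm^+_\l$-denominators nonzero.
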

\begin{proof}
Denote by $W=\sum_{\xi\in \La(Z^+_V)} (V\tp Z)[\nu+\xi]$, the  sum of weight spaces in $V\tp Z$ of all weights of singular vectors. It contains $1_V\tp Z^+_V$ as a vector subspace.
We will show that all factors $p_\al(t)$, $\al \in \Rm^+$, are well defined at $t=(\rho,\al^\vee)$ on
$W$.
That is true for $\al\in \Rm^+_\l$ since $Z$ is locally finite over $U_q(\l)$. Moreover,
$p_\l(0) W$ is in  $U_q(\l_+)$-invariants since all weights of $W$ are
$\Rm_\l$-dominant (by virtue of  Proposition \ref{reg_loc_fin} for $\g=\l$). So we can further assume $\al \in \Rm^+_{\g/\l}$.

Present $\xi\in \La(Z^+_V)$ as $\xi=\la-\sum_{i=1}^{n}m_i\ve_i$ with $m_i\leqslant \ell_i$.
For $\al=\ve_i+\ve_j$ with $i<j$ we find
$$
 [(\nu+\xi+\rho,\al^\vee)]_{q_\al}=[\ell_1+\sum_{l=2}^{i}\ell_l+\sum_{l=2}^{j}\ell_l-m_i-m_j +i+j-2]_q.
$$
The integer in the square brackets in the  right-hand side is positive, hence $p_\al(\rho_\al)$ is well defined, by Proposition \ref{easy_case}.

For short roots $\al=\ve_i$, $i=1,\ldots,n$, the expression  $[(\nu+\xi+\rho,\al^\vee)+k]_{q_\al}$
does not turn zero at all $k\in \Z$ as
it is proportional to $q^{\frac{1}{2}+k'}+q^{-\frac{1}{2}-k'}$ for some integer $k'$ ($q$ is not a root of unity). So
the  series (\ref{translationed_proj}) for $p_\al(t)$ is regular at $t=(\rho,\al^\vee)$.
This also proves that the extremal projector $p_{\al_1}(\rho_1)$ is well defined on $W$.
Finally,  $p_\g(0)W$ is  annihilated by each $e_{\al}$  with $\al\in \Pi$ since one can choose a normal order with $\al$ on the left.
\end{proof}

As all weights of $Z$ are multiplicity free, we can write, up to a non-zero factor:
$$\theta_{Z,V} w\propto \prod_{\al\in \Rm_{\g/\l}^+}\prod_{k=1}^{l_{\xi,\al}}\frac{[(\nu+\rho+\xi,\al^\vee)+k]_{q_\al}}{[(\nu+\rho,\al^\vee)-k]_{q_\al}}w,
\quad w\in Z^+_V[\xi],
$$
where $l_{\xi,\al}=\max\{l \in \Z:e_\al^lw\not =0 \}$. This is a corollary of the formula
(\ref{proj_eigen}). In particular, for  $\xi=\la-\sum_{i=1}^{n}m_i\ve_i$  we have
$l_{\xi, \ve_{i}}=m_i$ and $l_{\xi, \ve_j+\ve_{i}}=\min(m_j,m_i)$,
where  $i\not =j$.
Introduce the shortcuts  $\phi_{\xi,\al,k}$ for $\frac{[(\nu+\rho+\xi,\al^\vee)+k]_{q_\al}}{[(\nu+\rho,\al^\vee)-k]_{q_\al}}$.
Then
$$
\det(\theta_{Z,V})\propto\prod_{\xi}\prod_{\al\in \Rm_{\g/\l}^+}\prod_{k=1}^{l_{\xi,\al}}\phi_{\xi,\al,k},
\quad \mbox{where} \quad
\xi\in \{\la-\sum_{i=1}^{n}m_i\ve_i\}_{m_i\leqslant \ell_i}.
$$
Note that factors corresponding to  roots $\al \in \Rm_\l^+$ are absent in the product because the operator
$p_\l(\nu)$ is invertible on $Z^+_V$ due to local finiteness of $Z$ with respect to $U_q(\l)$.

\begin{propn}
 The operator $\theta_{Z,V}$ is invertible.
\end{propn}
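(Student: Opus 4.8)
The plan is to use the determinant formula established just above: $\det(\theta_{Z,V})$ equals a non-zero constant (a product of the scalars $c$ of (\ref{proj_eigen})) times $\prod_{\xi}\prod_{\al\in \Rm^+_{\g/\l}}\prod_{k=1}^{l_{\xi,\al}}\phi_{\xi,\al,k}$, the outer product running over the weights $\xi=\la-\sum_{i=1}^n m_i\ve_i$ with $0\leqslant m_i\leqslant \ell_i$. Hence it suffices to prove that every $\phi_{\xi,\al,k}$ is a non-zero complex number, i.e. that for $\al\in \Rm^+_{\g/\l}$ and $1\leqslant k\leqslant l_{\xi,\al}$ neither the numerator $[(\nu+\rho+\xi,\al^\vee)+k]_{q_\al}$ nor the denominator $[(\nu+\rho,\al^\vee)-k]_{q_\al}$ vanishes.

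The arithmetic input comes from the defining relations of $Z$. Since $1_Z$ is annihilated by the highest-weight generators $f_{\al_i}$, $i>1$, the weight $\la$ is orthogonal to each $\al_i=\ve_i-\ve_{i-1}$, so $(\la,\ve_1)=\dots=(\la,\ve_n)=:c$, and the normalization $q^{2(\la,\ve_i)}=-q^{-1}$ gives $q^{2c}=-q^{-1}$. Each root of $\Rm^+_{\g/\l}$ is either a short root $\ve_p$ (with $\al^\vee=2\ve_p$) or a long root $\ve_p+\ve_r$, $p<r$ (with $\al^\vee=\ve_p+\ve_r$), so in both cases $(\la,\al^\vee)=2c$. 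Using $(\nu,\ve_k)=\tfrac{\ell_1}{2}+\sum_{l=2}^k\ell_l$ and $(\rho,\ve_k)=k-\tfrac12$, one checks that $M_{\xi,\al,k}:=\bigl(\nu+\rho-\sum_i m_i\ve_i,\ \al^\vee\bigr)+k$ is always an integer, whence $q^{(\nu+\rho+\xi,\al^\vee)+k}=q^{M_{\xi,\al,k}}q^{2c}=-q^{M_{\xi,\al,k}-1}$.

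For the numerator, recall $[z]_{q_\al}=0$ iff $q^{(\al,\al)z}=1$. If $\al$ is short, then $(\al,\al)=1$ and vanishing would give $q^{M_{\xi,\al,k}-1}=-1$; but $q^m=-1$ for $m\in\Z$ forces $q^{2m}=1$, hence $m=0$ (as $q$ is not a root of unity), hence $q^0=-1$, absurd. If $\al$ is long, then $(\al,\al)=2$ and vanishing would give $q^{2(M_{\xi,\al,k}-1)}=1$, i.e. $M_{\xi,\al,k}=1$; excluding this is the one point that requires a real estimate and is, I expect, the main obstacle. Here $l_{\xi,\ve_p+\ve_r}=\min(m_p,m_r)$, so $k\leqslant m_p$ and $k\leqslant m_r$, and, using $k\geqslant1$ together with $m_p\leqslant\ell_p$, $m_r\leqslant\ell_r$,
$$
M_{\xi,\al,k}=(\nu,\ve_p)+(\nu,\ve_r)-m_p-m_r+k+(p+r-1)\ \geqslant\ \bigl((\nu,\ve_p)-\ell_p\bigr)+\bigl((\nu,\ve_r)-\ell_r\bigr)+(p+r).
$$
Now $(\nu,\ve_p)-\ell_p=\tfrac{\ell_1}{2}+\sum_{l=2}^{p-1}\ell_l\geqslant0$ when $p\geqslant2$, while $(\nu,\ve_1)-\ell_1=-\tfrac{\ell_1}{2}$; since $p<r$ forces $r\geqslant2$ and hence $(\nu,\ve_r)-\ell_r\geqslant\tfrac{\ell_1}{2}$, the two bracketed terms always sum to something non-negative. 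Together with $p+r\geqslant3$ (as $1\leqslant p<r\leqslant n$, so $n\geqslant2$) this yields $M_{\xi,\al,k}\geqslant3>1$, as required.

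For the denominator, $(\nu+\rho,\al^\vee)$ is a positive integer since $\nu$ is dominant integral, and the analogous but easier estimate shows $(\nu+\rho,\al^\vee)>k$ for $1\leqslant k\leqslant l_{\xi,\al}$ --- this finiteness is anyway already contained in Proposition \ref{proj_defined_on VM} --- so $(\nu+\rho,\al^\vee)-k$ is a non-zero integer and $[(\nu+\rho,\al^\vee)-k]_{q_\al}\neq0$, $q$ not being a root of unity. Hence every $\phi_{\xi,\al,k}$ is a non-zero complex number, so $\det(\theta_{Z,V})\neq0$ and $\theta_{Z,V}$ is invertible. (When $n=1$ the set $\Rm^+_{\g/\l}$ consists of the single short root $\ve_1$ and only the short-root argument is needed.)
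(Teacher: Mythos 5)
Your proof is correct and follows essentially the same approach as the paper's: reduce to showing each $\phi_{\xi,\al,k}\neq 0$, treat short and long roots in $\Rm^+_{\g/\l}$ separately, use the fact that $q^{2(\la,\ve_i)}=-q^{-1}$ forces the numerator's $q$-argument off the integer lattice (so that $[z]_{q_\al}=0$ would require $q^m=-1$ for $m\in\Z$, impossible when $q$ is not a root of unity), and use the bound $k\leqslant l_{\xi,\al}\leqslant\min(\ell_i,\ell_j)$ together with $(\nu,\ve_k)=\tfrac{\ell_1}{2}+\sum_{l=2}^{k}\ell_l$ and $(\rho,\ve_k)=k-\tfrac12$ to get positivity of the relevant integers. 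One small point in your favour: for short roots the denominator $[(\nu+\rho,\al^\vee)-k]_{q_\al}$ has integer argument and carries no $\la$-shift, so the paper's remark that both numerator and denominator ``are of the form $[(\la,\al^\vee)+k]_{q^{1/2}}$'' really applies only to the numerator; your explicit positivity estimate for the denominator closes that small gap.
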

\begin{proof}
We should prove that  $\phi_{\xi,\al,k}\not =0$ for all $\al \in \Rm^+_{\g/\l}$.
For short $\al$, neither the denominator nor enumerator in $\phi_{\xi,\al,k}$ turn zero since
they are of the form $[(\la,\al^\vee)+k]_{q^\frac{1}{2}}$ with $k\in \Z$, cf. the proof of Proposition \ref{proj_defined_on VM}. So we have to check it only for $\al=\ve_i+\ve_j\in \Rm^+_{\k/\l}$, $i\not =j$.
Then
$$
\phi_{\xi,\al,k}=\frac{[\ell_1+\sum_{l=2}^{i}\ell_l+\sum_{l=2}^{j}\ell_l-m_i-m_j +i+j-2+k]_q}{[\ell_1+\sum_{l=2}^{i}\ell_l+\sum_{l=2}^{j}\ell_l+i+j-2-k]_q}
$$
 does not vanish since $k\leqslant l_{\xi,\al}=\min\{m_i,m_j\}\leqslant \min\{\ell_i,\ell_j\}.$
\end{proof}
\begin{corollary}
  For any quasi-classical finite dimensional $U_q(\g)$-module $V$, the tensor product $V\tp Z$ is completely reducible.
\end{corollary}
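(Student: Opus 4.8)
The plan is to deduce complete reducibility of $V\tp Z$ from Theorem \ref{canonical} together with the invertibility of $\theta_{Z,V}$ established in the preceding Proposition. Since the subspace $V^+_Z\subset V$ is inaccessible here while $Z^+_V\subset Z$ is explicit, I would first pass to the flipped product $Z\tp V$: the braiding of $U_q(\g)$ yields a module isomorphism $V\tp Z\simeq Z\tp V$ (standard, as $V$ is finite dimensional and $Z$ is a highest weight module), so $V\tp Z$ is completely reducible exactly when $Z\tp V$ is. As $Z$ and $V$ are both irreducible modules of highest weight, Theorem \ref{canonical} applies to the pair $(Z,V)$: it remains to show that the canonical contravariant form on $Z\tp V$, i.e. the product of the contravariant forms on $Z$ and $V$, is non-degenerate on $(Z\tp V)^+$.

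Next I would invoke Proposition \ref{V-Z-extr} with the two tensor factors interchanged: under the isomorphism $Z^+_V\to (Z\tp V)^+$ the canonical form on $(Z\tp V)^+$ becomes the pullback form $\bigl\langle\theta_{Z,V}(\>.\>),\>.\>\bigr\rangle$ on $Z^+_V$. Here one may legitimately take ${}^+Z_V=Z^+_V$, since the orthogonal decomposition $Z=Z^+_V\op\omega(I^+_V)Z$ recorded above shows the contravariant form on $Z$ restricts non-degenerately to $Z^+_V$; moreover Proposition \ref{proj_defined_on VM} provides the regularization of the extremal projector on $1_V\tp Z^+_V$ — and its proof, being concerned only with the weights of singular vectors and the eigenvalues of the factors $p_\al$, applies to either ordering of the factors — so that $\theta_{Z,V}$ is genuinely an operator in $\End(Z^+_V)$. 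By Theorem \ref{Shap-proj} the restricted canonical form is then non-degenerate if and only if $\theta_{Z,V}$ is invertible.

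Finally, the preceding Proposition is precisely the assertion that $\theta_{Z,V}$ is invertible, its determinant being a product of the factors $\phi_{\xi,\al,k}$ with $\al\in\Rm^+_{\g/\l}$, each of which was shown to be non-zero. Combining the above, the canonical form is non-degenerate on $(Z\tp V)^+$, hence $Z\tp V$ and therefore $V\tp Z$ is completely reducible. The only substantive step is the preceding Proposition — the non-vanishing of every $\phi_{\xi,\al,k}$ — which is already in hand; the rest of the argument is formal, the one point requiring a line of care being the reduction to $Z\tp V$ and the fact that the base module $Z$ of \cite{M2,M3} is irreducible, so that Theorems \ref{canonical} and \ref{Shap-proj} are applicable.
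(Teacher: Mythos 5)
Your proof is correct and follows essentially the same route the paper has in mind: show that $\theta_{Z,V}$ is invertible (the preceding Proposition), identify $\langle\theta_{Z,V}(\,.\,),.\,\rangle$ with the canonical form on $(Z\tp V)^+$ via Proposition \ref{V-Z-extr}, and apply Theorem \ref{canonical}. You usefully make explicit the two details the paper leaves tacit, namely that one passes from $V\tp Z$ to $Z\tp V$ through the $R$-matrix isomorphism (legitimate since both factors lie in category $\mathcal{O}$ and $V$ is finite dimensional), and that the orthogonal decomposition $Z=Z^+_V\op\omega(I^+_V)Z$ lets one take ${}^+Z_V=Z^+_V$ so that $\theta_{Z,V}\in\End(Z^+_V)$.
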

The irreducible components of $V\tp Z$ are pseudo-parabolic modules described in \cite{M2}.

\end{document}